\newtheorem{theorem}{Theorem}
\newtheorem{lemma}{Lemma}
\newtheorem{corollary}{Corollary}
\newtheorem{definition}{Definition}
\newtheorem{proposition}{Proposition}
\newtheorem{remark}{Remark}
\newtheorem{example}{Example}
\newcommand{\ls}[1]
    {\dimen0=\fontdimen6\the\font\lineskip=#1\dimen0
     \advance\lineskip.5\fontdimen5\the\font
     \advance\lineskip-\dimen0
     \lineskiplimit=0.9\lineskip
     \baselineskip=\lineskip
     \advance\baselineskip\dimen0
     \normallineskip\lineskip\normallineskiplimit\lineskiplimit
     \normalbaselineskip\baselineskip
     \ignorespaces}
\begin{document}

\bibliographystyle{abbrv}

\title{Order preserving and order reversing operators on the class of $L^0$-convex functions in complete random normed modules}
\author{Mingzhi Wu$^1$ \quad Tiexin Guo$^{2}$\quad Long Long$^{2}$\\
1. School of Mathematics and Physics, China University of Geosciences,\\ Wuhan {\rm 430074}, China \\
2. School of Mathematics and Statistics,
Central South University,\\ Changsha {\rm 410083}, China\\
Email: wumz@cug.edu.cn; tiexinguo@csu.edu.cn; longlong@csu.edu.cn
}

\date{}
 \maketitle

\thispagestyle{plain}
\setcounter{page}{1}

\begin{abstract}
Based on both the fundamental theorem of affine geometry in regular $L^0$-modules and the recent progress in random convex analysis, this paper characterizes the stable fully order preserving and order reversing operators acting on the class of proper lower semicontinuous $L^0$-convex functions in complete random normed modules.
\end{abstract}

{\it Key words.} Order preserving operators, order reversing operators, Fenchel conjugation, $L^0$-convex functions, complete random normed modules, stability.

{\it MSC2020:} 46N10, 46B10.

\ls{1.5}

\section{Introduction and the main results}

In 2009, Artstein-Avidan and Milman \cite{AM} established the characterizations of the fully order preserving and the fully order reversing operators acting on the class of lower semicontinuous convex functions defined on the n-dimensional Euclidean space $\mathbb{R}^n$. In 2015, Iusem, Reem and Svaiter \cite{IRS} successfully generalized the work of \cite{AM} to Banach spaces in a nontrivial manner. In 2021, Cheng and Luo \cite{CL} solved several difficult open problems of fully order-preserving and order-reversing operators in the case of Banach spaces. The purpose of this paper is to generalize the work of \cite{IRS} to complete random normed modules, which are a random generalization of Banach spaces. Random functional analysis is just based on the idea of randomizing the traditional space theory of functional analysis. To introduce the main results of this paper, let us first recall some usual terminologies and notations frequently used in random functional analysis.
\par
Throughout this paper $(\Omega, \mathcal{F},P)$ always denotes a given probability space, $\mathbb{K}$ the scalar field $\mathbb{R}$ of real numbers or $\mathbb{C}$ of complex numbers, $L^0(\mathcal{F}, \mathbb{K})$ the algebra of equivalence classes of $\mathbb{K}$-valued random variables on $(\Omega, \mathcal{F},P)$ (as usual, two random variables equal almost surely (a.s.) are said to be equivalent), $L^0(\mathcal{F}):= L^0(\mathcal{F}, \mathbb{R})$ and $\bar{L}^0(\mathcal{F})$ the set of equivalence classes of extended real-valued random variables on $(\Omega, \mathcal{F},P)$.
\par
$\bar{L}^0(\mathcal{F})$ is usually partially ordered via $\xi\leq \eta$ if $\xi^0(\omega)\leq \eta^0(\omega)$ for almost all $\omega$ in $\Omega$, where $\xi^0$ and $\eta^0$ are arbitrarily chosen representatives of $\xi$ and $\eta$, respectively. The nice properties of $(\bar{L}^0(\mathcal{F}), \leq)$ are surveyed as follows.

\begin{proposition}\label{proposition1.1}(\cite{DS})
$(\bar{L}^0(\mathcal{F}), \leq)$ is a complete lattice, $\vee H$ and $\wedge H$ stand for the supremum and infimum of $H$, respectively, for any nonempty subset $H$ of $\bar{L}^0(\mathcal{F})$. Specially, the following statements are true:
\begin{enumerate}
\item There are two sequences $\{a_n, n\in \mathbb{N}\}$ and $\{b_n, n\in \mathbb{N}\}$ in $H$ for any nonempty subset $H$ of $\bar{L}^0(\mathcal{F})$ such that $\vee_{n\geq 1}a_n=\vee H$ and $\wedge_{n\geq 1}b_n= \wedge H$.
\item If $H$ in (1) is directed upwards (downwards), i.e., there exists $h_3\in H$ for any two elements $h_1$ and $h_2$ in $H$ such that $h_1\vee h_2\leq h_3$ ($h_3\leq h_1\wedge h_2$), then $\{a_n, n\in \mathbb{N}\}$ (correspondingly, $\{b_n, n\in \mathbb{N}\}$) can be chosen as nondecreasing (nonincreasing).
\item $(L^0(\mathcal{F}), \leq)$ is a Dedekind complete lattice, i.e., every nonempty subset with an upper bound has a supremum.
\end{enumerate}
\end{proposition}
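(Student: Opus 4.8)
The plan is to establish the headline claim --- that $(\bar{L}^0(\mathcal{F}),\leq)$ is a complete lattice --- and then read off (1)--(3) from the construction, the whole argument resting on a monotone convergence computation for integrals. Since $\xi\mapsto-\xi$ is an order-reversing bijection of $\bar{L}^0(\mathcal{F})$ onto itself, it suffices to produce suprema; infima then follow by symmetry, and in particular $\wedge H=-\vee(-H)$. First I would fix an order isomorphism $\varphi:[-\infty,+\infty]\to[0,1]$ (for instance $\varphi(x)=\frac{1}{\pi}\arctan x+\frac{1}{2}$, with $\varphi(\pm\infty)$ the obvious endpoints); composing representatives with $\varphi$ induces an order isomorphism $\Phi$ from $\bar{L}^0(\mathcal{F})$ onto the set $L^0(\mathcal{F},[0,1])$ of equivalence classes of $[0,1]$-valued random variables. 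As $\Phi$ and $\Phi^{-1}$ are order isomorphisms, they send arbitrary suprema to suprema, so it is enough to build $\vee H$ for an arbitrary nonempty $H\subseteq L^0(\mathcal{F},[0,1])$ and then pull back through $\Phi^{-1}$.

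Given such an $H$, let $\tilde H$ denote the family of all finite joins $h_1\vee\cdots\vee h_n$ with the $h_i\in H$; then $\tilde H$ is directed upwards and has exactly the same upper bounds as $H$. Put $\alpha:=\sup\{\int_\Omega g^0\,dP:g\in\tilde H\}\leq1$, choose $g_n\in\tilde H$ with $\int_\Omega g_n^0\,dP\to\alpha$, and, using that $\tilde H$ is directed upwards, replace $g_n$ by $g_1\vee\cdots\vee g_n\in\tilde H$ so that $\{g_n\}$ becomes nondecreasing. Let $g$ be the pointwise supremum of suitable representatives of the $g_n$; this is measurable \emph{precisely because the family is countable}, which is the one point where the countable reduction is indispensable. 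By the monotone convergence theorem $\int_\Omega g^0\,dP=\alpha$. Now for any $h\in\tilde H$ the sequence $h\vee g_n$ lies in $\tilde H$, is nondecreasing in $n$, and converges pointwise to $h\vee g$; since $\int_\Omega g_n^0\,dP\leq\int_\Omega(h\vee g_n)^0\,dP\leq\alpha$, another application of monotone convergence gives $\int_\Omega(h\vee g)^0\,dP=\alpha=\int_\Omega g^0\,dP$, and because $h\vee g\geq g$ with all functions integrable this forces $h\vee g=g$ a.s., i.e. $h\leq g$. Hence $g$ is an upper bound of $\tilde H\supseteq H$, while any upper bound $u$ of $H$ dominates every $g_n$ and therefore dominates $g=\vee_{n}g_n$; thus $g=\vee H$, and pulling back yields the complete-lattice assertion.

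Item (1) is then a bookkeeping remark: each $g_n$ above is a finite join of members of $H$, so enumerating all members of $H$ that occur (a countable union of finite sets) gives a sequence $\{a_m\}\subseteq H$ with $\vee_{m}a_m\geq\vee_{n}g_n=\vee H$ and $\vee_{m}a_m\leq\vee H$, hence $\vee_{m}a_m=\vee H$; the claim for $\wedge H$ is the mirror image under negation. For (2), when $H$ is already directed upwards one bypasses $\tilde H$: choose $g_n\in H$ with $\int_\Omega g_n^0\,dP\to\alpha:=\sup\{\int_\Omega h^0\,dP:h\in H\}$, then set $h_1:=g_1$ and inductively pick $h_n\in H$ with $h_n\geq h_{n-1}\vee g_n$, obtaining a nondecreasing sequence $\{h_n\}\subseteq H$ with $\int_\Omega h_n^0\,dP\to\alpha$, after which the argument of the previous paragraph gives $\vee_{n}h_n=\vee H$; the directed-downwards case is symmetric. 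Finally (3) follows at once: if $\emptyset\neq H\subseteq L^0(\mathcal{F})$ has an upper bound $u\in L^0(\mathcal{F})$, then for any fixed $h_0\in H$ one has $h_0\leq\vee H\leq u$ in $\bar{L}^0(\mathcal{F})$ with $h_0$ and $u$ finite-valued a.s., so $\vee H$ is finite-valued a.s., i.e. $\vee H\in L^0(\mathcal{F})$.

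The only genuine obstacle I anticipate is the measurability point concealed in the formation of $g$: for an uncountable $H$ the pointwise supremum of representatives need not be measurable, and the entire scheme is arranged so that the supremum is attained along a \emph{countable}, indeed nondecreasing, subfamily, at which point the monotone convergence theorem does all the real work. The remaining ingredients --- the order-isomorphic reduction to $[0,1]$, the interplay between $H$ and $\tilde H$, and the finiteness verification in (3) --- are routine.
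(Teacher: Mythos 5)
Your argument is correct, and it is essentially the classical essential-supremum construction (reduce to $[0,1]$-valued classes, maximize the integral over the upward-directed family of finite joins, and use monotone convergence) that underlies the result the paper simply cites to Dunford--Schwartz without proof. The only point worth making explicit is that in the directed case of (2) the bound $\int (h\vee h_n)^0\,dP\leq\alpha$ comes from directedness (choose $k\in H$ dominating $h\vee h_n$), but this is the routine adaptation you indicate.
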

\par
As usual, for any two elements $\xi$ and $\eta$ in $\bar{L}^0(\mathcal{F})$, $\xi< \eta$ means $\xi\leq \eta$ but $\xi\neq \eta$, whereas, for any $A\in \mathcal{F}$ with $P(A)>0$, $\xi< \eta$ on $A$ means $\xi^0(\omega)< \eta^0(\omega)$ for almost all $\omega$ in $A$, where $\xi^0$ and $\eta^0$ are arbitrarily chosen representatives of $\xi$ and $\eta$, respectively.
\par
In this paper, the following two notations are always employed:\\
$L^0_+(\mathcal{F})=\{\xi\in L^0(\mathcal{F}): \xi\geq 0\};$\\
$L^0_{++}(\mathcal{F})=\{\xi\in L^0(\mathcal{F}): \xi>0$ on $\Omega\}.$

\begin{definition}\label{definition1.2}(\cite{Guo-JFA})
Let $E$ be a left module over the algebra $L^0(\mathcal{F},\mathbb{K})$ (briefly, an $L^0(\mathcal{F},\mathbb{K})$-module). A mapping $\|\cdot\|$ from $E$ to $L^0_+(\mathcal{F})$ is an $L^0$-seminorm on $E$ if the following conditions are satisfied:
\begin{enumerate}
\item $\|\xi x\|=|\xi|\|x\|$ for all $\xi\in L^0(\mathcal{F},\mathbb{K})$ and $x\in E$;
\item $\|x+y\|\leq \|x\|+\|y\|$ for all $x$ and $y$ in $E$.
\end{enumerate}
If, in addition, $\|x\|=0$ implies $x=\theta$ (the null of $E$), then the $L^0$-seminorm is an $L^0$-norm on $E$, at this time $(E,\|\cdot\|)$ is a random normed module (briefly, an $RN$ module) over $\mathbb{K}$ with base $(\Omega, \mathcal{F}, P)$. If a family $\mathcal{P}$ of $L^0$-seminorms on $E$ satisfies that $\vee\{\|x\|: \|\cdot\|\in \mathcal{P}\}=0$ implies $x=\theta$, then the ordered pair $(E, \mathcal{P})$ is a random locally convex module (briefly, an $RLC$ module) over $\mathbb{K}$ with bases $(\Omega, \mathcal{F}, P)$.
\end{definition}

\par
When $(\Omega, \mathcal{F}, P)$ is trivial, namely $\mathcal{F}=\{\Omega, \emptyset\}$, an $RN$ or $RLC$ module over $\mathbb{K}$ with base $(\Omega, \mathcal{F}, P)$ just reduces to an ordinary normed or locally convex space over $\mathbb{K}$, and thus an $RN$ or $RLC$ module is a random generalization of an ordinary normed or locally convex space. The simplest $RN$ module is $L^0(\mathcal{F}, \mathbb{K})$ endowed with the $L^0$-norm $|\cdot|$ (namely the absolute value mapping). It is well known that $L^0(\mathcal{F}, \mathbb{K})$ is a metrizable linear topological space (in fact, also a topological algebra) in the topology of convergence in probability, the topology of convergence in probability is often nonlocally convex, for example, $L^0(\mathcal{F},\mathbb{K})$ has the trivial topological dual when $\mathcal{F}$ is atomless. The $(\varepsilon,\lambda)$-topology defined in Proposition \ref{proposition1.3} below is exactly a natural generalization of the topology of convergence in probability.

\begin{proposition}\label{proposition1.3}(\cite{Guo-survey1,Guo-JFA,GP})
Let $(E, \mathcal{P})$ be an $RLC$ module over $\mathbb{K}$ with base $(\Omega, \mathcal{F}, P)$. Denote by $\mathcal{P}_{f}$ the family of nonempty finite subsets of $\mathcal{P}$, for each $Q\in \mathcal{P}_{f}$, the $L^0$-seminorm $\|\cdot\|_Q$ is defined by $\|x\|_Q=\vee \{\|x\|: \|\cdot\| \in Q\}$ for any $x\in E$. For any given positive numbers $\varepsilon$ and $\lambda$ with $0<\lambda <1$, and for any $Q\in \mathcal{P}_{f}$, let $\mathcal{U}_{\theta}(Q,\varepsilon,\lambda)=\{x\in E: P\{\omega\in \Omega~|~ \|x\|_Q(\omega)< \varepsilon\}>1-\lambda\}$, then $\{\mathcal{U}_{\theta}(Q,\varepsilon,\lambda): Q\in \mathcal{P}_{f}, \varepsilon>0, 0<\lambda <1\}$ forms a local base of some Hausdorff linear topology for $E$, called the $(\varepsilon,\lambda)$-topology induced by $\mathcal{P}$, denoted by $\mathcal{T}_{\varepsilon,\lambda}$. Furthermore, the following statements hold:
\begin{enumerate}
  \item $(E, \mathcal{T}_{\varepsilon,\lambda})$ is a Hausdorff topological module over the topological algebra $L^0(\mathcal{F}, \mathbb{K})$.
  \item $\mathcal{T}_{\varepsilon,\lambda}$ is merizable when $(E, \mathcal{P})$ is an $RN$ module $(E, \|\cdot\|)$, namely $\mathcal{P}$ reduces to a single $L^0$-norm $\|\cdot\|$.
\end{enumerate}
\end{proposition}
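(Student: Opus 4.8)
The plan is to proceed in three stages. First, I would check that the family $\mathcal{B}:=\{\mathcal{U}_{\theta}(Q,\varepsilon,\lambda): Q\in\mathcal{P}_{f},\ \varepsilon>0,\ 0<\lambda<1\}$ satisfies the classical axioms guaranteeing that it is a neighborhood base at $\theta$ for a (necessarily unique) linear topology $\mathcal{T}_{\varepsilon,\lambda}$ on $E$, and that this topology is Hausdorff. Second, I would upgrade this to the statement that the $\mathbb{K}$-scalar multiplication extends to a jointly continuous action of the topological algebra $L^0(\mathcal{F},\mathbb{K})$ (with its convergence-in-probability topology), giving the topological module structure in (1). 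Third, in the single-$L^0$-norm case I would exhibit a countable neighborhood base at $\theta$ and invoke the Birkhoff--Kakutani theorem to get (2). The only facts about $L^0(\mathcal{F},\mathbb{K})$ that I use beyond Proposition~\ref{proposition1.1} are that each $\|x\|_Q$ (and each $|\xi|$, $\xi\in L^0(\mathcal{F},\mathbb{K})$) is an a.s.\ finite nonnegative random variable, that $\|\cdot\|_Q$ satisfies $\|\xi x\|_Q=|\xi|\,\|x\|_Q$ and the triangle inequality, and that $L^0(\mathcal{F},\mathbb{K})$ is a topological algebra in the topology of convergence in probability.

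\textbf{Step 1 (neighborhood-base axioms and Hausdorffness).} I would verify, in order: (i) $\theta\in\mathcal{U}_{\theta}(Q,\varepsilon,\lambda)$ always; (ii) downward directedness --- given $\mathcal{U}_{\theta}(Q_1,\varepsilon_1,\lambda_1)$ and $\mathcal{U}_{\theta}(Q_2,\varepsilon_2,\lambda_2)$, put $Q=Q_1\cup Q_2$, $\varepsilon=\varepsilon_1\wedge\varepsilon_2$, $\lambda=\lambda_1\wedge\lambda_2$, and use $\|x\|_{Q_i}\leq\|x\|_Q$ to get $\mathcal{U}_{\theta}(Q,\varepsilon,\lambda)\subseteq\mathcal{U}_{\theta}(Q_1,\varepsilon_1,\lambda_1)\cap\mathcal{U}_{\theta}(Q_2,\varepsilon_2,\lambda_2)$; (iii) each member is balanced, since $|\alpha|\leq1$ gives $\|\alpha x\|_Q=|\alpha|\,\|x\|_Q\leq\|x\|_Q$; (iv) each member is absorbing --- for fixed $x$, since $\|x\|_Q$ is a.s.\ finite pick $n\in\mathbb{N}$ with $P\{\|x\|_Q<n\}>1-\lambda$, so $\alpha x\in\mathcal{U}_{\theta}(Q,\varepsilon,\lambda)$ whenever $|\alpha|\leq\varepsilon/n$; (v) for each member, $V:=\mathcal{U}_{\theta}(Q,\varepsilon/2,\lambda/2)$ satisfies $V+V\subseteq\mathcal{U}_{\theta}(Q,\varepsilon,\lambda)$, by the triangle inequality for $\|\cdot\|_Q$ together with the fact that the intersection of two events each of probability $>1-\lambda/2$ has probability $>1-\lambda$. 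By the classical criterion for the linear topology generated by a balanced, absorbing filter base closed under the $V+V$ condition, $\mathcal{B}$ is then a neighborhood base at $\theta$ for a unique linear topology. For Hausdorffness I would invoke the defining non-degeneracy of an $RLC$ module: if $x\neq\theta$, then $\vee\{\|x\|:\|\cdot\|\in\mathcal{P}\}\neq0$, so some $\|\cdot\|_0\in\mathcal{P}$ has $P\{\|x\|_0>0\}>0$; choosing $n$ with $P\{\|x\|_0\geq1/n\}>0$ and setting $\varepsilon_0=1/n$, $\lambda_0=\tfrac12P\{\|x\|_0\geq1/n\}\in(0,1)$, one checks $P\{\|x\|_0<\varepsilon_0\}=1-2\lambda_0<1-\lambda_0$, so $x\notin\mathcal{U}_{\theta}(\{\|\cdot\|_0\},\varepsilon_0,\lambda_0)$; hence $\bigcap\mathcal{B}=\{\theta\}$.

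\textbf{Step 2 (module structure --- the main obstacle).} Since $(E,\mathcal{T}_{\varepsilon,\lambda})$ is already a topological abelian group, it remains to prove joint continuity of $L^0(\mathcal{F},\mathbb{K})\times E\to E$, $(\xi,x)\mapsto\xi x$, at an arbitrary $(\xi_0,x_0)$. Fixing $\mathcal{U}_{\theta}(Q,\varepsilon,\lambda)$, I would write
\[
\xi x-\xi_0 x_0=(\xi-\xi_0)(x-x_0)+\xi_0(x-x_0)+(\xi-\xi_0)x_0 ,
\]
so that $\|\xi x-\xi_0 x_0\|_Q\leq|\xi-\xi_0|\,\|x-x_0\|_Q+|\xi_0|\,\|x-x_0\|_Q+|\xi-\xi_0|\,\|x_0\|_Q$. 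The difficulty is that the ``coefficients'' $|\xi_0|$ and $\|x_0\|_Q$ are unbounded random variables, so each summand can only be controlled on a set of large probability. I would first pick $m\in\mathbb{N}$ with $P\{|\xi_0|<m\}>1-\lambda/4$ and $n\in\mathbb{N}$ with $P\{\|x_0\|_Q<n\}>1-\lambda/4$, \emph{then} restrict $\xi$ to the convergence-in-probability neighborhood $\{\xi:P\{|\xi-\xi_0|<\min(1,\varepsilon/(3n))\}>1-\lambda/4\}$ of $\xi_0$ and $x$ to $x_0+\mathcal{U}_{\theta}(Q,\min(\varepsilon/3,\varepsilon/(3m)),\lambda/4)$. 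Intersecting the four exceptional events (each of probability $<\lambda/4$), on the complement --- of probability $>1-\lambda$ --- all three summands are $<\varepsilon/3$, whence $\xi x-\xi_0 x_0\in\mathcal{U}_{\theta}(Q,\varepsilon,\lambda)$. Combined with the fact that $L^0(\mathcal{F},\mathbb{K})$ is itself a topological algebra under convergence in probability, this shows that $(E,\mathcal{T}_{\varepsilon,\lambda})$ is a Hausdorff topological module over $L^0(\mathcal{F},\mathbb{K})$, proving (1).

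\textbf{Step 3 (metrizability in the $RN$-module case).} When $\mathcal{P}=\{\|\cdot\|\}$, every member of $\mathcal{B}$ is of the form $\mathcal{U}_{\theta}(\varepsilon,\lambda)$, and the countable subfamily $\{\mathcal{U}_{\theta}(1/k,1/k):k\in\mathbb{N}\}$ is cofinal in $\mathcal{B}$ (given $\mathcal{U}_{\theta}(\varepsilon,\lambda)$, take $k$ with $1/k<\varepsilon\wedge\lambda$), hence is a countable neighborhood base at $\theta$; since a Hausdorff topological group with a countable neighborhood base at the identity is metrizable by the Birkhoff--Kakutani theorem, $\mathcal{T}_{\varepsilon,\lambda}$ is metrizable, and in fact one may check directly that $d(x,y)=\inf\{r>0:P\{\|x-y\|\geq r\}\leq r\}$ is a translation-invariant metric inducing it. I expect Step~2 to be the real obstacle: the bookkeeping of exceptional sets is otherwise routine, but one must be careful to fix the truncation levels $m,n$ of the random coefficients before the $\varepsilon/3$-splitting and to track strict versus non-strict inequalities so that the final estimate is genuinely $>1-\lambda$ rather than only $\geq1-\lambda$.
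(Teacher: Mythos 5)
Your proof is correct; the paper itself gives no proof of Proposition~\ref{proposition1.3} (it is quoted from \cite{Guo-survey1,Guo-JFA,GP}), and your argument is exactly the standard one in those references: the balanced/absorbing/$V+V$ filter-base criterion for the linear topology, the non-degeneracy condition $\vee\{\|x\|:\|\cdot\|\in\mathcal{P}\}\neq 0$ for Hausdorffness, the three-term splitting with prior truncation of $|\xi_0|$ and $\|x_0\|_Q$ for joint continuity of the $L^0(\mathcal{F},\mathbb{K})$-action, and a countable cofinal base (equivalently the Ky Fan-type metric) for metrizability in the $RN$-module case. No gaps.
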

\par
In this paper, we always assume that an $RLC$ module is endowed with its $(\varepsilon,\lambda)$-topology. It is easy to see that in an $RLC$ module $(E, \mathcal{P})$ over $\mathbb{K}$ with base $(\Omega, \mathcal{F}, P)$, a net $\{x_{\alpha}, \alpha\in \Gamma\}$ converges in the $(\varepsilon,\lambda)$-topology to $x$ iff $\{\|x_{\alpha}-x\|, \alpha\in \Gamma\}$ converges in probability to 0 for each $\|\cdot\|\in \mathcal{P}$, in particular in an $RN$ module $(E, \|\cdot\|)$ a sequence $\{x_n, n\in \mathbb{N}\}$ converges in the $(\varepsilon,\lambda)$-topology to $x$ iff $\{\|x_n-x\|, n\in \mathbb{N}\}$ converges in probability to 0.
\par
The $(\varepsilon,\lambda)$-topology is essentially a kind of nonlocally convex topology, the theory of traditional conjugate spaces universally fails for the development of random locally convex modules. The idea of random conjugate spaces is introduced and have been systematically developed in order to overcome the above-stated obstacle, see \cite{Guo-JFA} for details. Let $(E, \mathcal{P})$ be an $RLC$ module over $\mathbb{K}$ with base $(\Omega, \mathcal{F}, P)$, then the set $(E, \mathcal{P})^*$ of continuous module homomorphisms from $(E,\mathcal{P})$ to $L^0(\mathcal{F}, \mathbb{K})$ is the random conjugate space of $(E,\mathcal{P})$. $(E, \mathcal{P})^*$ forms an $L^0(\mathcal{F}, \mathbb{K})$-module in a natural way, denoted by $E^*$ if no confusion occurs.
\par
Let $L(E,E_1)$ be the set of continuous module homomorphisms from an $RN$ module $(E, \|\cdot\|)$ to another $RN$ module $(E_1, \|\cdot\|_1)$, where $(E,\|\cdot\|)$ and $(E_1, \|\cdot\|_1)$ are both over the scalar field $\mathbb{K}$ with base $(\Omega, \mathcal{F}, P)$, then it is well known from \cite{Guo-Modulehomo} that $T\in L(E,E_1)$ iff $T$ is an a.s. bounded linear operator, namely $T$ is linear and there exist $\xi\in L^0_+(\mathcal{F})$ such that $\|T(x)\|_1\leq \xi\|x\|$ for all $x\in E$, and $\|T\|:= \wedge \{\xi\in L^0_+(\mathcal{F})~|~\|T(x)\|_1\leq \xi\|x\|$ for all $x\in E\}$ is the $L^0$-norm of $T$. It is also known from \cite{Guo-Modulehomo} that $(L(E,E_1), \|\cdot\|)$ forms an $RN$ module over $\mathbb{K}$ with base $(\Omega, \mathcal{F}, P)$, which is still complete if $(E_1, \|\cdot\|_1)$ is complete, in particular one can see by taking $E_1=L^0(\mathcal{F},\mathbb{K})$ that $E^*$ is complete for any $RN$ module $E$. For any given $T\in L(E, E_1)$, $T^*: E^*_1 \rightarrow E^*$ defined by $T^*(f)(x)=f(T(x))$ for any $f\in E^*_1$ and $x\in E$, is the random conjugate operator of $T$, it is also known from \cite{Guo-Modulehomo} that $T^*\in L(E^*_1, E^*)$ and $\|T^*\|=\|T\|.$
\par
Random convex analysis has been developed to provide a useful analytical tool for the study of conditional convex risk measures, see \cite{Guo-JFA,Guo-SCM,GZWY,GZZ-SCM,GZZ-RCA1,GZZ-RCA2} and the references therein.
\par
In the sequel of this paper, all the $RLC$ modules occurring this paper are assumed to be over the real field $\mathbb{R}$ with base $(\Omega, \mathcal{F}, P)$.

\begin{definition}\label{definition1.4}(\cite{Guo-SCM,GZWY,GZZ-SCM,GZZ-RCA1,GZZ-RCA2})
Let $(E, \mathcal{P})$ be an $RLC$ module with base $(\Omega, \mathcal{F}, P)$. A mapping $f: E\rightarrow \bar{L}^0(\mathcal{F})$ is a proper lower semicontinuous $L^0$-convex function if the following are satisfied:
\begin{enumerate}
  \item $f(x)>-\infty$ on $\Omega$ for all $x\in E$;
  \item $dom(f):=\{x\in E~|~f(x)<+\infty \text{~on~} \Omega\}$ is nonempty;
  \item $f(\xi x+(1-\xi)y)\leq \xi f(x)+ (1-\xi)f(y)$ for any $x,y\in E$ and $\xi\in L^0_+(\mathcal{F})$ with $0\leq \xi\leq 1$ (here, we make the convention that $0\cdot(+\infty)=0$);
  \item $epi(f): = \{(x,r)\in E\times L^0(\mathcal{F})~|~f(x)\leq r\}$ is closed, where $E\times L^0(\mathcal{F})$ naturally forms an $RLC$ module with the family $\mathcal{P}'$ of $L^0$-seminorm consisting of $\{\|\cdot\|+|\cdot|: \|\cdot\|\in \mathcal{P}\}$ $((\|\cdot\|+|\cdot|)(x,r): = \|x\|+|r|$ for any $(x,r)\in E\times L^0(\mathcal{F})$ and any $\|\cdot\|\in \mathcal{P})$.
\end{enumerate}
\end{definition}
\par
Let $(E, \mathcal{P})$ be an $RLC$ module with base $(\Omega, \mathcal{F}, P)$. For any $x\in E$, define the $L^0$-seminorm $|\langle\cdot, x\rangle|: E^*\rightarrow L^0_+(\mathcal{F})$ by $|\langle\cdot, x\rangle|(f)= |f(x)|$ for any $f\in E^*$, then $\sigma(E^*,E): = \{ |\langle\cdot, x\rangle|: x\in E\}$ is a family of $L^0$-seminorms such that $(E^*, \sigma(E^*,E))$ is an $RLC$ module with base $(\Omega, \mathcal{F}, P)$, whose $(\varepsilon,\lambda)$-topology is the random $w^*$-topology for $E^*$, a proper lower semicontinuous $L^0$-convex function from $(E^*, \sigma(E^*,E))$ to $\bar{L}^0(\mathcal{F})$ is a proper random $w^*$-lower semicontinuous $L^0$-convex function on $E^*$. If $f: E \rightarrow \bar{L}^0(\mathcal{F})$ is a proper lower semicontinuous $L^0$-convex function, as usual, we can define the random (Fenchel) conjugate $f^*$ of $f$ by $f^*(u)= \vee \{u(x)-f(x): x\in E\}$ for any $u\in E^*$, then $f^*$ is just a proper random $w^*$-lower semicontinuous $L^0$-convex function on $E^*$. Similarly, $f_E^{**}: E\rightarrow \bar{L}^0(\mathcal{F})$ is defined by $f_E^{**}(x)= \vee \{u(x)-f^*(u)~|~u\in E^*\}$ for any $x\in E$, then one can have $f_E^{**}=f$, namely the random Fenchel-Moreau duality theorem, which was established in \cite{GZZ-RCA1}. When $E$ is an $RN$ module with base $(\Omega, \mathcal{F}, P)$, we denote the random conjugate space of $E^*$ by $E^{**}$, and further define $f^{**}: E^{**} \rightarrow \bar{L}^0(\mathcal{F})$ by $f^{**}(g)=\vee \{g(u)- f^*(u): u\in E^*\}$ for any $g\in E^{**}$, then it is clear that $f_E^{**}=f^{**}|_E$ (the restriction of $f^{**}$ to $E$).

In the sequel of this section, $(E, \|\cdot\|)$ always denotes a complete $RN$ module with base $(\Omega, \mathcal{F}, P)$ such that $(E, \|\cdot\|)$ has full support (see the related discussion in Section 2), $\mathscr{C}(E)$ the set of proper lower semicontinuous $L^0$-convex functions on $E$, and $\mathscr{C}_{w^*}(E^*)$ the set of proper random $w^*$-lower semicontinuous $L^0$-convex functions on $E^*$. We observe that $\mathscr{C}(E)$ is stable in the sense that $\tilde{I}_A f_1+\tilde{I}_{A^c} f_2\in \mathscr{C}(E)$ for any $A\in \mathcal{F}, f_1$ and $f_2$ in $\mathscr{C}(E)$, where $A^c=\Omega\setminus A$, $I_A$ stands for the characteristic function of $A$ and $\tilde{I}_A$ the equivalence class of $I_A$, and $\tilde{I}_A f_1+\tilde{I}_{A^c} f_2: E\rightarrow \bar{L}^0(\mathcal{F})$ is defined by $[\tilde{I}_A f_1+\tilde{I}_{A^c} f_2](x)=\tilde{I}_A f_1(x)+\tilde{I}_{A^c} f_2(x), \forall x\in E$. Similarly, one can understand that $\mathscr{C}_{w^*}(E^*)$ is stable.
\par
We can now define the order preserving (reversing) operators acting on $\mathscr{C}(E)$ and then state the main results of this paper. As usual, for any $f_1$ and $f_2$ in $\mathscr{C}(E)$ ($g_1$ and $g_2$ in $\mathscr{C}_{w^*}(E^*)$), $f_1\leq f_2$ means $f_1(x)\leq f_2(x)$ for any $x\in E$ ($g_1\leq g_2$ means $g_1(u)\leq g_2(u)$ for any $u\in E^*$).

\begin{definition}\label{definition1.5}
An operator $T: \mathscr{C}(E) \rightarrow \mathscr{C}(E)$ is
\begin{enumerate}
  \item order preserving if $f\leq g$ implies $T(f)\leq T(g)$.
  \item fully order preserving if $f\leq g$ iff $T(f)\leq T(g)$, and additionally $T$ is onto.
  \item stable if $T(\tilde{I}_Af+ \tilde{I}_{A^c}g)= \tilde{I}_A T(f)+ \tilde{I}_{A^c} T(g)$ for any $A\in \mathcal{F}$ and any $f$ and $g$ in $\mathscr{C}(E)$.
  \item an involution if $T(T(f))=f$ for any $f\in \mathscr{C}(E)$.
\end{enumerate}
\end{definition}

\par
Our first main result is the following:

\begin{theorem}\label{OP}
An operator $T: \mathscr{C}(E) \rightarrow \mathscr{C}(E)$ is stable and fully order preserving iff there exist $c\in E, w\in E^*, \beta\in L^0(\mathcal{F}), \tau\in L^0_{++}(\mathcal{F})$ and a continuous module automorphism $H$ of $E$ such that $T(f)(x)=\tau f(H(x)+c)+w(x)+ \beta$ for any $x\in E$ and $f\in \mathscr{C}(E)$.
\end{theorem}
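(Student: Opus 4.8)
The plan is to adapt the Banach-space strategy of Artstein-Avidan--Milman and Iusem--Reem--Svaiter \cite{AM,IRS,CL} to the stable $L^0$-module setting. The sufficiency is the routine direction: if $T(f)(x)=\tau f(H(x)+c)+w(x)+\beta$, then $x\mapsto H(x)+c$ is a continuous affine automorphism of $E$, so $f\circ(H(\cdot)+c)\in\mathscr{C}(E)$ for $f\in\mathscr{C}(E)$, and multiplication by $\tau\in L^0_{++}(\mathcal{F})$ together with addition of the continuous affine function $w+\beta$ keep us in $\mathscr{C}(E)$; the same recipe with $H^{-1},\tau^{-1}$ gives the inverse operator, order preservation both ways follows from $\tau>0$, and stability holds since $\tilde{I}_A$ distributes over $\tau(\cdot)$ and $(\tilde{I}_A+\tilde{I}_{A^c})(w(x)+\beta)=w(x)+\beta$.

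For the necessity, assume $T$ is stable and fully order preserving; then $T$ is an order isomorphism of $(\mathscr{C}(E),\leq)$ and hence preserves every supremum and infimum existing in $\mathscr{C}(E)$. Writing $\delta_{x,\alpha}$ ($x\in E$, $\alpha\in L^0(\mathcal{F})$) for the function equal to $\alpha$ at $x$ and to $+\infty$ elsewhere, I would first show that $f\in\mathscr{C}(E)$ equals some $\delta_{x,\alpha}$ iff $\{g\in\mathscr{C}(E):g\geq f\}$ is totally ordered: if $\mathrm{dom}(f)=\{x_0\}$ this set is the chain $\{\delta_{x_0,\beta}:\beta\geq\alpha\}$, while if $\mathrm{dom}(f)$ contains two distinct points the random Hahn--Banach theorem (available since $E$ has full support) separates them on a set of positive measure, and the restrictions of $f$ to the two opposite closed $L^0$-half-spaces are incomparable majorants of $f$. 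As this property is purely order-theoretic, $T$ carries each $\delta_{x,\alpha}$ to some $\delta_{\Phi(x),\Psi_x(\alpha)}$ with $\Phi:E\to E$ a bijection (the point is $\alpha$-independent since a chain maps to a chain) and each $\Psi_x$ an order automorphism of $L^0(\mathcal{F})$; then the identity $\delta_{y,c}\vee h=\delta_{y,\max(h(y),c)}$ for $y\in\mathrm{dom}(h)$, together with preservation of finite suprema, yields the reconstruction formula $T(f)(y)=\Psi_{\Phi^{-1}(y)}\big(f(\Phi^{-1}(y))\big)$, with $\mathrm{dom}(T(f))=\Phi(\mathrm{dom}(f))$.

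Next I would pin down $\Phi$ and $\{\Psi_x\}$ via chord functions. For $x_1,x_2\in E$, $r_1,r_2\in L^0(\mathcal{F})$, the function $\ell$ supported on the $L^0$-segment $[x_1,x_2]=\{(1-\xi)x_1+\xi x_2:\xi\in L^0(\mathcal{F}),\,0\leq\xi\leq 1\}$ and affine there with $\ell(x_i)=r_i$ equals $\inf\{\delta_{x_1,r_1},\delta_{x_2,r_2}\}$ in $\mathscr{C}(E)$ (the closed $L^0$-convex hull of the pointwise minimum, which random convex analysis places in $\mathscr{C}(E)$). Since $T$ preserves this infimum, $T(\ell)$ is the chord joining $(\Phi(x_1),\Psi_{x_1}(r_1))$ and $(\Phi(x_2),\Psi_{x_2}(r_2))$; comparing domains gives $\Phi([x_1,x_2])=[\Phi(x_1),\Phi(x_2)]$, and stability of $T$ applied to deltas glued over $A\in\mathcal{F}$ gives $\Phi(\tilde{I}_Ax_1+\tilde{I}_{A^c}x_2)=\tilde{I}_A\Phi(x_1)+\tilde{I}_{A^c}\Phi(x_2)$. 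Thus $\Phi$ preserves $L^0$-segments and commutes with $\mathcal{F}$-gluing, so by the fundamental theorem of affine geometry in regular $L^0$-modules $\Phi^{-1}(x)=H(x)+c$ for a continuous module automorphism $H$ of $E$ and some $c\in E$. Feeding this back and equating the two descriptions of $T(\ell)$ along the segment gives $\Psi_{(1-\xi)x_1+\xi x_2}\big((1-\xi)r_1+\xi r_2\big)=(1-\xi)\Psi_{x_1}(r_1)+\xi\Psi_{x_2}(r_2)$ for all $x_i,r_i$ and all $\xi\in L^0(\mathcal{F})$ with $0\leq\xi\leq 1$, which forces $(x,r)\mapsto\Psi_x(r)$ to be $L^0$-affine on $E\times L^0(\mathcal{F})$; hence $\Psi_x(r)=\tau r+v(x)+\beta_0$ with $\tau\in L^0(\mathcal{F})$ a constant, $v$ an $L^0$-linear functional and $\beta_0\in L^0(\mathcal{F})$. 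Then $T(f)(y)=\tau f(H(y)+c)+w(y)+\beta$ with $w:=v\circ H$ and $\beta:=v(c)+\beta_0$; finally $w\in E^*$ because $T(\mathbf{0})=w+\beta$ is a finite-valued lower semicontinuous $L^0$-convex function, hence continuous, and $\tau\in L^0_{++}(\mathcal{F})$ because each $\Psi_x$ is strictly increasing and stability (glue the constants $0\leq 1$ over any set where $\tau=0$ and use strict order preservation) forbids $\tau$ to vanish on a set of positive probability.

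The hard part will be the two places that genuinely leave the Banach-space setting. First, every order- and lattice-theoretic step must be run compatibly with the stable, $\mathcal{F}$-local structure: order automorphisms of $L^0(\mathcal{F})$ are far more plentiful than those of $\mathbb{R}$, and it is only the \emph{stability} of $T$ combined with the Dedekind completeness of $L^0(\mathcal{F})$ (Proposition~\ref{proposition1.1}) that rigidifies the maps $\Psi_x$ into $r\mapsto\tau r+v(x)+\beta_0$ with a single positive constant $\tau$. Second, the step ``$\Phi$ preserves $L^0$-segments and $\mathcal{F}$-gluings $\Rightarrow$ $\Phi$ is $L^0$-affine'' is exactly what the fundamental theorem of affine geometry in regular $L^0$-modules supplies, and I expect the remaining technical load to be in checking that $E$ (complete, of full support) meets its hypotheses and in justifying the closed-$L^0$-convex-hull and infimum computations behind the chord functions, which rest on the random Fenchel--Moreau theorem and on the continuity of finite-valued lower semicontinuous $L^0$-convex functions recalled above.
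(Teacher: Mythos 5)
Your overall architecture (transfer the problem to a distinguished subfamily of $\mathscr{C}(E)$ on which $T$ restricts to a bijection, show that bijection preserves $L^0$-line segments, and invoke the fundamental theorem of affine geometry in regular $L^0$-modules) is the right one, but you run it on the ``delta'' functions $\delta_{x,\alpha}$ rather than on the $L^0$-affine functions, and two of your key steps break precisely because $(L^0(\mathcal{F}),\leq)$ is only partially ordered. First, your order-theoretic characterization of the deltas is false in this setting: the set of majorants of $\delta_{x_0,\alpha}$ contains $\{\delta_{x_0,\beta}:\beta\geq\alpha\}$, which is order-isomorphic to the up-set $\{\beta\in L^0(\mathcal{F}):\beta\geq\alpha\}$ and is \emph{not} a chain once $\mathcal{F}$ is nontrivial (take $\beta_1=\alpha+\tilde{I}_A$ and $\beta_2=\alpha+\tilde{I}_{A^c}$ with $0<P(A)<1$). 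So ``majorants totally ordered'' does not detect deltas, and the very first step of your necessity argument --- that $T$ permutes the deltas, producing $\Phi$ and the $\Psi_x$ --- has no proof. (The paper never proves that $T$ maps deltas to deltas; it only uses deltas at the very end, together with surjectivity of $T$ and the already-established formula $T(f)(x)=\tau f^{**}(C^*x+c)+\langle w,x\rangle+\beta$, to show that $C^*$ maps $E$ onto $E$.) Instead, the paper characterizes membership in $\mathscr{A}(E)$ purely in terms of the \emph{minorant} structure (Propositions \ref{comparison} and \ref{AffCha}), where stability of $T$ is exactly what rigidifies the partial order, and deduces that $T$ restricts to a stable bijection $\hat{T}$ of $\mathscr{A}(E)\cong E^*\times L^0(\mathcal{F})$.

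Second, even granting the delta step, you apply the fundamental theorem of affine geometry to $\Phi\colon E\to E$. That theorem requires the underlying regular $L^0$-module to contain two $L^0$-independent elements, which $E$ need not (e.g.\ $E=L^0(\mathcal{F})$; already for $E=\mathbb{R}$ every increasing bijection maps segments to segments without being affine), so your route would need an extra rank hypothesis on $E$ that is absent from the statement --- this is exactly the dimension-$\geq 2$ restriction that \cite{IRS} had to impose and that the paper deliberately avoids by applying the theorem to $\hat{T}$ on $E^*\times L^0(\mathcal{F})$, where $(u_0,0)$ and $(0,1)$ are automatically $L^0$-independent. Your subsequent derivation of the joint $L^0$-affinity of $(x,r)\mapsto\Psi_x(r)$ also presupposes that $\Phi$ carries the segment parameter affinely, so it does not offer an independent way around this. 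A further, smaller issue: the hard analytic content of the paper's proof --- Proposition \ref{subdiff} (via the closedness of $Conv_{L^0}\{u_1,u_2\}$ and a random separation theorem), the a.s.\ boundedness of $D$ and $d$ via the uniform boundedness principle, and the identification $Z=E$ forcing $c\in E$ and $H=C^*|_E$ --- has no counterpart in your sketch; in your setup the analogous work (showing $v\in E^*$ and $H$ continuous) is compressed into one sentence and would still have to be done.
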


\begin{corollary}\label{involution}
An operator $T: \mathscr{C}(E) \rightarrow \mathscr{C}(E)$ is a stable order preserving involution iff there exist $c\in E, w\in E^*$ and a continuous module automorphism $H$ of $E$, satisfying $H^2= ID_X, c\in Ker(H+ ID_E), w\in Ker(H^*+ ID_{E^*})$ such that
$$T(f)(x)=f(H(x)+c)+ w(x)-\frac{1}{2}w(c)$$
for all $x\in E$, where $ID_E$ and $ID_{E^*}$ stand for the identity operators on $E$ and $E^*$, respectively.
\end{corollary}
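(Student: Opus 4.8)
The plan is to reduce Corollary~\ref{involution} to Theorem~\ref{OP} and then impose the involution identity $T\circ T=ID_{\mathscr{C}(E)}$ to pin down the parameters; the converse is a direct verification.

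\textbf{Step 1: apply Theorem~\ref{OP}.} A stable order preserving involution is automatically onto, since it is its own inverse, and it is fully order preserving: if $T(f)\le T(g)$, then applying the order preserving map $T$ once more and using $T\circ T=ID$ gives $f\le g$, while the reverse implication is just order preservation. Hence Theorem~\ref{OP} yields $c\in E$, $w\in E^*$, $\beta\in L^0(\mathcal{F})$, $\tau\in L^0_{++}(\mathcal{F})$ and a continuous module automorphism $H$ of $E$ with
\[
T(f)(x)=\tau\, f(H(x)+c)+w(x)+\beta,\qquad \forall x\in E,\ \forall f\in\mathscr{C}(E).
\]

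\textbf{Step 2: compose $T$ with itself and read off the constraints.} Substituting the formula into itself and using that $H$ and $w$ are additive and $H$ is $L^0$-homogeneous gives
\[
T(T(f))(x)=\tau^2 f\big(H^2(x)+H(c)+c\big)+\tau\, w(H(x))+w(x)+\tau\, w(c)+\tau\beta+\beta .
\]
Imposing $T(T(f))=f$ for all $f\in\mathscr{C}(E)$, I would first run $f$ over the constant functions $\xi\in L^0(\mathcal{F})$ (which lie in $\mathscr{C}(E)$): the resulting identity $\tau^2\xi+\tau\, w(H(x))+w(x)+\tau\, w(c)+\tau\beta+\beta=\xi$, valid for every $\xi$ and every $x$, forces, by taking $\xi=0$ and then $\xi=1$, that $\tau^2=1$, hence $\tau=1$ since $\tau\in L^0_{++}(\mathcal{F})$, together with $w(H(x))+w(x)+w(c)+2\beta=0$ for all $x$; putting $x=\theta$ and using $H(\theta)=\theta$ gives $\beta=-\tfrac12 w(c)$, and then $H^*(w)=w\circ H=-w$, i.e.\ $w\in Ker(H^*+ID_{E^*})$. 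With $\tau=1$ the identity collapses to $f(H^2(x)+H(c)+c)=f(x)$ for all $f\in\mathscr{C}(E)$ and all $x$; testing against $f_z:=\|\cdot-z\|\in\mathscr{C}(E)$ and taking $z=x$ forces $H^2(x)+H(c)+c=x$ for every $x$, whence, at $x=\theta$, $H(c)+c=\theta$, i.e.\ $c\in Ker(H+ID_E)$, and then $H^2=ID_E$. Feeding these relations back into the formula gives exactly $T(f)(x)=f(H(x)+c)+w(x)-\tfrac12 w(c)$.

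\textbf{Step 3: the converse.} Given $H$, $c$, $w$ with $H^2=ID_E$, $c\in Ker(H+ID_E)$, $w\in Ker(H^*+ID_{E^*})$, define $T$ by the displayed formula. Then $T$ maps $\mathscr{C}(E)$ into $\mathscr{C}(E)$, because $x\mapsto H(x)+c$ is a continuous affine automorphism of $E$ and $w(\cdot)-\tfrac12 w(c)$ a continuous affine $L^0$-functional, so composing $f$ with the former and adding the latter preserves properness, the $L^0$-convexity inequality, and closedness of the epigraph in the sense of Definition~\ref{definition1.4}; stability is immediate from the pointwise definition of $\tilde I_A f_1+\tilde I_{A^c}f_2$, and order preservation is immediate since $f\le g$ is evaluated pointwise and the same affine term is added to both sides. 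Finally $T(T(f))=f$ is the computation of Step~2 read in the favourable direction: $H^2(x)+H(c)+c=x+(-c)+c=x$, $w(H(x))+w(x)=0$ because $H^*(w)=-w$, and the scalar terms $w(c)-\tfrac12 w(c)-\tfrac12 w(c)$ cancel.

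\textbf{Main obstacle.} The only delicate point is the extraction in Step~2: one must be sure that $\mathscr{C}(E)$ is rich enough — the constant functions and the $L^0$-norm distances $\|\cdot-z\|$, each readily checked to be proper, $L^0$-convex and lower semicontinuous in the sense of Definition~\ref{definition1.4} — so that ``$T(T(f))=f$ for all $f$'' genuinely collapses to the stated pointwise relations among $\tau,\beta,w,c,H$. Once this is granted, everything else is routine bookkeeping with the composition $T\circ T$ and the forward direction of Theorem~\ref{OP}.
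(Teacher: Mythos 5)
Your proposal is correct and takes essentially the same route as the paper, which simply defers to the argument of Iusem--Reem--Svaiter (Corollary 6 of \cite{IRS}): note that an order preserving involution is automatically a fully order preserving bijection, invoke Theorem \ref{OP}, and impose $T\circ T=ID$ to force $\tau=1$, $\beta=-\tfrac12 w(c)$, $H^2=ID_E$, $H(c)=-c$ and $H^*(w)=-w$. Your test functions (the constants $\xi\in L^0(\mathcal F)$ and $\|\cdot-z\|$) indeed belong to $\mathscr{C}(E)$, so the extraction of these pointwise constraints is legitimate, and the converse verification is routine.
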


\begin{definition}\label{definition1.8}
An operator $S: \mathscr{C}(E) \rightarrow \mathscr{C}_{w^*}(E^*)$ is
\begin{enumerate}
  \item order reversing if $f\leq g$ implies $S(f)\geq S(g)$.
  \item fully order reversing whenever $f\leq g$ iff $S(f)\geq S(g)$, and additionally $S$ is onto.
  \item stable if $S(\tilde{I}_A f+ \tilde{I}_{A^c} g)= \tilde{I}_A S(f)+ \tilde{I}_{A^c} S(g)$ for any $A\in \mathcal{F}$ and $f$ and $g$ in $\mathscr{C}(E)$.
\end{enumerate}
\end{definition}

\par
Our second main result is the following:

\begin{theorem}\label{OR}
An operator $S: \mathscr{C}(E) \rightarrow \mathscr{C}_{w^*}(E^*)$ is stable fully order reversing iff there exist $v\in E^*, y\in E, \rho\in L^0(\mathcal{F}), \tau \in L^0_{++}(\mathcal{F})$ and a continuous module automorphism $H$ of $E$ such that $S(f)(u)= \tau f^*(H^*(u)+ v)+ u(y)+ \rho$ for all $f\in \mathscr{C}(E)$ and all $u\in E^*.$
\end{theorem}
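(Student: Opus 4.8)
The plan is to reduce Theorem \ref{OR} to Theorem \ref{OP} by composing $S$ with the Fenchel conjugation, which is itself a canonical stable, fully order reversing bijection between $\mathscr{C}(E)$ and $\mathscr{C}_{w^*}(E^*)$. Concretely, consider the two conjugation maps $C_1\colon\mathscr{C}(E)\to\mathscr{C}_{w^*}(E^*)$, $C_1(f)=f^*$, and $C_2\colon\mathscr{C}_{w^*}(E^*)\to\mathscr{C}(E)$, $C_2(g)(x)=\vee\{u(x)-g(u):u\in E^*\}$. By the random Fenchel--Moreau duality theorem (stated in the excerpt as $f_E^{**}=f$, together with its $w^*$-version for proper random $w^*$-lower semicontinuous $L^0$-convex functions on $E^*$), $C_1$ and $C_2$ are mutually inverse bijections. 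Both are order reversing, and in fact fully order reversing: if $f_1^*\geq f_2^*$ then applying $C_2$ and using $C_2C_1=\mathrm{id}$ gives $f_1\leq f_2$. Both are also stable, i.e. they commute with the operation $\tilde I_A(\cdot)+\tilde I_{A^c}(\cdot)$; this follows from the local (countable concatenation) property of $L^0$-convex functions and of $E,E^*$, a standard fact in random convex analysis. Hence, given a stable fully order reversing $S$, the operator $T:=C_2\circ S\colon\mathscr{C}(E)\to\mathscr{C}(E)$ is stable and fully order preserving (the composition of two order reversals; onto-ness and the two-sided implication are preserved), and moreover $C_1\circ T=C_1\circ C_2\circ S=S$, so $S(f)=T(f)^*$ for all $f\in\mathscr{C}(E)$.

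Next I would apply Theorem \ref{OP} to $T$: there are $c\in E$, $w\in E^*$, $\beta\in L^0(\mathcal{F})$, $\tau\in L^0_{++}(\mathcal{F})$ and a continuous module automorphism $H_0$ of $E$ with $T(f)(x)=\tau f(H_0(x)+c)+w(x)+\beta$. It then remains to compute $S(f)(u)=T(f)^*(u)=\vee\{u(x)-\tau f(H_0(x)+c)-w(x)-\beta:x\in E\}$. Substituting $z=H_0(x)+c$ (a bijection of $E$, since $H_0$ is an automorphism), using $u(H_0^{-1}(z-c))=(H_0^{-1})^*(u)(z)-u(H_0^{-1}(c))$ and the analogous identity for $w$, and then the scaling identity $\vee_z\{\ell(z)-\tau f(z)\}=\tau f^*(\ell/\tau)$ valid for $\ell\in E^*$ and $\tau\in L^0_{++}(\mathcal{F})$, one obtains $S(f)(u)=\tau f^*\bigl(\tfrac1\tau(H_0^{-1})^*(u)-\tfrac1\tau(H_0^{-1})^*(w)\bigr)-u(H_0^{-1}(c))+w(H_0^{-1}(c))-\beta$. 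Now set $H:=\tfrac1\tau H_0^{-1}$; this is a continuous module automorphism of $E$, and $H^*(u)=\tfrac1\tau(H_0^{-1})^*(u)$. With $v:=-H^*(w)\in E^*$, $y:=-H_0^{-1}(c)\in E$ and $\rho:=w(H_0^{-1}(c))-\beta\in L^0(\mathcal{F})$, the displayed formula becomes exactly $S(f)(u)=\tau f^*(H^*(u)+v)+u(y)+\rho$, which is the asserted form.

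For the converse direction I would verify directly that any $S$ of the stated form is stable and fully order reversing. Order reversal is immediate from $f\leq g\Rightarrow f^*\geq g^*$ and monotonicity of $\xi\mapsto\tau\xi$; for the reverse implication, since $H^*$ is an automorphism of $E^*$ (so $u\mapsto H^*(u)+v$ is an affine bijection of $E^*$) the inequality $S(f)\geq S(g)$ forces $f^*(w')\geq g^*(w')$ for all $w'\in E^*$, whence $f=(f^*)^*_E\leq(g^*)^*_E=g$ by Fenchel--Moreau. Surjectivity: given $h\in\mathscr{C}_{w^*}(E^*)$, one checks that $w'\mapsto\tfrac1\tau\bigl(h((H^*)^{-1}(w'-v))-\bigl((H^*)^{-1}(w'-v)\bigr)(y)-\rho\bigr)$ again lies in $\mathscr{C}_{w^*}(E^*)$, and its pre-conjugate $f\in\mathscr{C}(E)$ satisfies $S(f)=h$. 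Stability follows from stability of conjugation together with the $\tilde I_A$-distributivity of $\tau f^*(H^*(u)+v)$ in $f$.

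I expect the genuine work to be concentrated not in the reduction — which is short once set up — but in the supporting facts that make it legitimate: the $w^*$-version of the random Fenchel--Moreau duality on $E^*$, the transfer of ``stable + fully order'' through $C_1,C_2$ at the level of $\bar{L}^0$-valued suprema (the local-property arguments letting $\vee$ commute with $\tilde I_A(\cdot)+\tilde I_{A^c}(\cdot)$), the scaling identity $(\tau f)^*=\tau f^*(\cdot/\tau)$, and the fact that $\mathscr{C}_{w^*}(E^*)$ is closed under the affine automorphisms of $E^*$ used above. Once these are in place, Theorem \ref{OP} does the heavy lifting.
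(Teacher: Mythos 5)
Your proposal is correct and follows essentially the same route as the paper: both reduce to Theorem \ref{OP} via the bijectivity of the Fenchel conjugation between $\mathscr{C}(E)$ and $\mathscr{C}_{w^*}(E^*)$ (your $T=C_2\circ S$ is exactly the paper's $T$ defined by $S(f)=T(f)^*$), and both then perform the same change-of-variables computation of $T(f)^*$, arriving at the identical parameters $H=\tfrac{1}{\tau}H_0^{-1}$, $v=-H^*(w)$, $y=-H_0^{-1}(c)$, $\rho=w(H_0^{-1}(c))-\beta$. The supporting facts you flag (stability of conjugation, the scaling identity, the $w^*$-duality) are precisely what the paper relies on implicitly, so there is no substantive divergence.
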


The main results of this paper, whether from their formulations or from the ideas of their proofs, are considerably motivated by the work in \cite{AM,IRS}, but we would like to mention here the main differences between the work in \cite{AM,IRS} and our work.

The foundation for the work in \cite{AM,IRS} is different from that for our work. The work in \cite{AM,IRS} depends on classical convex analysis, which is based on the theory of conjugate spaces for locally convex spaces, and the fundamental theorem of affine geometry in linear spaces, whereas our work depends on random convex analysis, which is based on the theory of random conjugate spaces for random locally convex modules, and has just been developed for the last ten years \cite{GZWY,GZZ-RCA1,GZZ-RCA2}, and the fundamental theorem of affine geometry in regular $L^0$-modules, which was just established in \cite{WGL}.

We mention now the second difference between our approach and the one in \cite{IRS}. Let $\mathscr{A}(E)$ be the $L^0$-module of continuous $L^0$-affine functions on a complete $RN$ module $E$ with base $(\Omega, \mathcal{F}, P)$, as in \cite{IRS} for a fully order preserving operator $T$ from $\mathscr{C}(X)$ to $\mathscr{C}(X)$ when $X$ is a Banach space, we will first prove that for a stable fully order preserving operator $T$ from $\mathscr{C}(E)$ to $\mathscr{C}(E)$ in the setting of Theorem \ref{OP}, $T|_{\mathscr{A}(E)}$ is a stable bijection from $\mathscr{A}(E)$ onto $\mathscr{A}(E)$, and hence induces a stable bijection $\hat{T}$ from $E^*\times L^0(\mathcal{F})$ onto $E^*\times L^0(\mathcal{F})$ by identifying $\mathscr{A}(E)$ with $E^*\times L^0(\mathcal{F})$, then we will directly prove that $\hat{T}$ is $L^0$-affine as a whole by applying the fundamental theorem of affine geometry in regular $L^0$-modules \cite{WGL}, not so in coordinate component manner as in \cite{IRS}. Our approach has an advantage that many complicated discussions on the rank of $L^0$-modules can be avoided.

The third difference is that the operators in Theorem \ref{OP}, Corollary \ref{involution} and Theorem \ref{OR} are additionally assumed to be stable, the additional assumption is natural and necessary from the process of establishing the fundamental theorem of affine geometry in regular $L^0$-modules \cite{WGL}. When $(\Omega, \mathcal{F}, P)$ is trivial, namely $\mathcal{F}=\{\Omega, \emptyset\}$ a complete $RN$ module $E$ with base $(\Omega, \mathcal{F}, P)$ just reduces to a Banach space, in which case an order preserving (or reversing) operator (in fact any operator) is automatically stable, that is to say, the stable assumption automatically disappears. On the other hand, when $(\Omega, \mathcal{F}, P)$ is a general probability space, $(L^0(\mathcal{F}), \leq)$ is a partially ordered set rather than a totally ordered set like $\mathbb{R}$, which also makes the proofs of the main results of this paper more involved than those of \cite{IRS}.
\par
The remainder of this paper is organized as follows. In Section 2 of this paper, we give some preliminaries mainly on $L^0$-affine functions. In Section 3 of this paper, we treat the stable order preserving operators and prove Theorem \ref{OP} and Corollary \ref{involution}. Finally, in Section 4 of this paper we treat the stable order reserving operator and prove Theorem \ref{OR}.

\section{Preliminaries}

 For each $A\in {\mathcal F}$, $\tilde A=\{B\in {\mathcal F}: P(B\triangle A)=0\}$ denotes the equivalence class of $A$, and we also use $I_{\tilde A}$ to stand for $\tilde I_A$. Given $\xi\in L^0({\mathcal F})$, the notation $[\xi>0]$ stands for the equivalence class of $A$, where $A=\{\omega\in \Omega~|~\xi^0(\omega)>0\}$ and $\xi^0$ is an arbitrarily chosen representative of $\xi$, thus $I_{[\xi>0]}=\tilde{I}_A$. Some other notation like $[\xi= 0]$ and so on are understood in the similar way.

Besides, we denote $\mathcal F_+=\{A\in {\mathcal F}: P(A)>0\}$.

In our main results, Theorem \ref{OP}, Corollary \ref{involution} and Theorem \ref{OR}, the $RN$ module is required to have full support. Now we recall the notion of support for an $RN$ module.
Let $(E,\|\cdot\|)$ be an $RN$ module with base $(\Omega,{\mathcal F},P)$. Denote $\xi=\vee\{\|x\|: x\in E\}$(generally $\xi\in \bar L^0({\mathcal F})$). Clearly $[\xi>0]=[\xi=+\infty]$. Any representative $A$ of $[\xi>0]$ is called a support of $(E,\|\cdot\|)$. If $\Omega$ is a support of $(E,\|\cdot\|)$, then $(E,\|\cdot\|)$ is called having full support. Except for the trivial case $E=\{\theta\}$, a support $A$ of $(E,\|\cdot\|)$ must have positive probability, if $(E,\|\cdot\|)$ does not have full support, then $A^c$ has positive probability and $\tilde I_{A^c}x=\theta$ for every $x\in E$, therefore $A^c$ is indeed redundant for $(E,\|\cdot\|)$, in which case we define a probability measure $P_A: {\mathcal F}\cap A\to [0,1]$ by $P_A(B\cap A)=\frac{P(B\cap A)}{P(A)}, \forall B\in {\mathcal F}$, then since $\|x\|=\tilde I_A\|x\|$ for every $x\in E$, we can always take $(E,\|\cdot\|)$ as an $RN$ module with base the smaller probability space $(A,{\mathcal F}\cap A, P_A)$, in which case $(E,\|\cdot\|)$ has full support. We see that the requirement of full support is not an excessive restriction.

In the sequel of this paper, an element $x$ of an $RN$ module $(E,\|\cdot\|)$ is said to have full support, if $\|x\|\neq 0$ on $\Omega$, in other words, $\|x\|\in L^0_{++}({\mathcal F})$. For any $x\in E$ and $u\in E^*$, we write $\langle u,x\rangle$ for $u(x)$. As usual, $E^{**}=(E^*)^*$, and we can regard $E$ as a subset of $E^{**}$ by the natural embedding.

\begin{proposition}\label{support}
Let $(E,\|\cdot\|)$ be an $RN$ module and $E^*$ its random conjugate space. Suppose that $(E,\|\cdot\|)$ is complete and has full support. Then the following statements are true:\\
(1). There exists $x_0\in E$ such that $\|x_0\|=1$, in addition, there exists $u_0\in E^*$ such that $\langle u_0,x_0\rangle=1 $ and $\|u_0\|=1$.\\
(2). If $u\in E^*$ has full support, then for any $\xi\in L^0(\mathcal F)$, there exists $x_0\in E$ such that $\langle u,x_0\rangle=\xi$.
\end{proposition}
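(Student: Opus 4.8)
The plan is to prove (1) in two steps --- first manufacture an element of $E$ of full support by a countable gluing (concatenation) argument that uses the completeness of $E$, then feed the normalized such element into the norm-preserving random Hahn--Banach theorem to produce $u_0$ --- and to prove (2) by an analogous gluing argument followed by a rescaling in $L^0(\mathcal F)$.

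For (1): since $(E,\|\cdot\|)$ has full support, $\vee\{\|x\|:x\in E\}>0$ on $\Omega$, so Proposition \ref{proposition1.1}(1) yields a sequence $\{x_n\}\subseteq E$ with $\vee_{n\ge1}\|x_n\|>0$ on $\Omega$, whence $\bigcup_{n\ge1}[\|x_n\|>0]=\Omega$ a.s. Disjointifying, set $A_1=[\|x_1\|>0]$ and $A_n=[\|x_n\|>0]\setminus\bigcup_{k<n}[\|x_k\|>0]$ for $n\ge2$; then $\{A_n\}$ is a countable partition of $\Omega$ and $\|x_n\|>0$ on $A_n$. Since each $\tilde I_{A_n}x_n$ is supported on $A_n$, the tail $\sum_{n=M+1}^{N}\tilde I_{A_n}x_n$ is supported on $\bigcup_{n>M}A_n$, a set of probability $\sum_{n>M}P(A_n)$, which tends to $0$; hence the partial sums of $\sum_{n\ge1}\tilde I_{A_n}x_n$ form a Cauchy sequence for the $(\varepsilon,\lambda)$-topology, which is metrizable by Proposition \ref{proposition1.3}(2), so by completeness of $E$ the series converges to some $x'\in E$. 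Passing to the limit blockwise gives $\tilde I_{A_n}x'=\tilde I_{A_n}x_n$, hence $\|x'\|=\sum_{n\ge1}\tilde I_{A_n}\|x_n\|$, which is strictly positive on every $A_n$; thus $\|x'\|\in L^0_{++}(\mathcal F)$, and $x_0:=\|x'\|^{-1}x'\in E$ has $\|x_0\|=1$. Applying the norm-preserving Hahn--Banach theorem for $RN$ modules (see \cite{Guo-JFA}) to $x_0$ then gives $u_0\in E^*$ with $\|u_0\|\le1$ and $\langle u_0,x_0\rangle=\|x_0\|=1$; finally $1=\langle u_0,x_0\rangle\le\|u_0\|\,\|x_0\|=\|u_0\|$ forces $\|u_0\|=1$.

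For (2): let $u\in E^*$ have full support, i.e. $\|u\|\in L^0_{++}(\mathcal F)$. For every $A\in\mathcal F_+$ we have $\|\tilde I_Au\|=\tilde I_A\|u\|\neq0$, so $\tilde I_Au\neq\theta$, and hence there is $z\in E$ with $\tilde I_A\langle u,z\rangle\neq0$; setting $w:=\tilde I_Az$ gives $\emptyset\neq[\langle u,w\rangle\neq0]\subseteq A$. A routine exhaustion argument then produces a sequence $\{z_n\}\subseteq E$ with $\bigcup_{n\ge1}[\langle u,z_n\rangle\neq0]=\Omega$ a.s. Disjointifying to a partition $\{A_n\}$ with $\langle u,z_n\rangle\neq0$ on $A_n$ and gluing exactly as in (1), the series $y:=\sum_{n\ge1}\tilde I_{A_n}z_n$ converges in $E$; by continuity and $L^0$-linearity of $u$, $\langle u,y\rangle=\sum_{n\ge1}\tilde I_{A_n}\langle u,z_n\rangle$, which equals $\langle u,z_n\rangle\neq0$ on each $A_n$, so $[\langle u,y\rangle\neq0]=\Omega$ and $\langle u,y\rangle^{-1}\in L^0(\mathcal F)$. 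Given any $\xi\in L^0(\mathcal F)$, the element $x_0:=\xi\,\langle u,y\rangle^{-1}\,y\in E$ then satisfies $\langle u,x_0\rangle=\xi$.

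I expect the gluing step to be the main obstacle: one must check that the series of blockwise pieces is Cauchy in the $(\varepsilon,\lambda)$-topology --- which here holds not because the pieces are small but because their supports shrink --- so that completeness of $E$ and metrizability of that topology deliver a genuine element of $E$, and that this element inherits, blockwise, the positivity of $\|x_n\|$ (respectively the non-vanishing of $\langle u,z_n\rangle$). The remaining ingredients --- the sup-achieving sequences of Proposition \ref{proposition1.1}, the disjointification, the localization identity $\|\tilde I_Au\|=\tilde I_A\|u\|$, the identification of convergence in the $(\varepsilon,\lambda)$-topology with convergence in probability of the $L^0$-norms, and above all the norm-preserving random Hahn--Banach theorem --- are standard and would be quoted rather than reproved.
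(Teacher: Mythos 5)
Your proposal is correct, and it is worth separating the two parts when comparing it with the paper. For part (1) the paper gives no argument at all --- it simply cites \cite[Lemma 3.1]{Guo-BBKS} and \cite[Corollary 3.2]{Guo-survey1} --- and your countable-concatenation construction of a full-support element (gluing $\tilde I_{A_n}x_n$ over a partition, using that the tails are supported on sets of vanishing probability, hence Cauchy) followed by the norm-preserving random Hahn--Banach theorem is exactly the content of those references; so there you are reproving rather than diverging. The genuine difference is in part (2). The paper's route is to assert, ``by the same technique,'' that the supremum $\vee\{|\langle u,x\rangle|: \|x\|\leq 1\}=\|u\|$ is \emph{attained} at some $x_u\in E$ and then to rescale by $\|u\|^{-1}$; your route only manufactures, by an exhaustion over $\mathcal F_+$ plus the same gluing, some $y\in E$ with $\langle u,y\rangle$ invertible in $L^0(\mathcal F)$, and rescales by $\langle u,y\rangle^{-1}$. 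What your version buys is robustness: invertibility of $\langle u,y\rangle$ is a much weaker demand than attainment of $\|u\|$ on the closed unit ball, and it follows from a soft argument using only completeness and the stratification structure. Attainment, by contrast, is delicate --- over a trivial base space it reduces to norm attainment of a functional on a Banach space, which fails without reflexivity, and the concatenation trick cannot repair this since gluing countably many near-maximizers over a partition need not produce a maximizer. So your argument proves slightly less about $u$ (no attainment claim) but everything the proposition actually requires, and it does so by a cleaner mechanism; the only stylistic cost is that you redo in full the two external lemmas that the paper is content to quote.
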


\begin{proof}
(1). The first part is shown in \cite[Lemma 3.1]{Guo-BBKS}. The second part is \cite[Corollary 3.2]{Guo-survey1}, see also \cite[Remark 2.2]{Guo-BBKS}.\\
(2). Consider the family $\{|\langle u,x\rangle|: x\in E, \|x\|\leq 1\}$, using the same technique as in the proof of \cite[Lemma 3.1]{Guo-BBKS}, there exists $x_u\in E$ such that $\langle u,x_u\rangle=\vee\{|\langle u,x\rangle|: x\in E, \|x\|\leq 1\}=\|u\|$. If $u$ has full support, then $\|u\|$ is invertible in $L^0({\mathcal F})$, as a result $x_0=\xi \|u\|^{-1} x_u$ satisfies $\langle u,x_0\rangle=\xi$.
\end{proof}

Now we discuss the indicator function.

For any $x_0\in E$, we define the indicator function $\delta_{x_0}: E\to \bar L^0({\mathcal F})$ by $$\delta_{x_0}(x)=0\cdot I_{[\|x-x_0\|=0]}+\infty\cdot I_{[\|x- x_0\|\neq 0]}, \forall x\in E.$$
 Note that $\delta_{x_0}(x_0)=0$, however if $x\neq x_0$, $\delta_{x_0}(x)$ may be not equal to $+\infty$, since it is possible that $\|x-x_0\|=0$ on some $A\in {\mathcal F}_+$ so that $\delta_{x_0}(x)=0$ on $A$.

 By simple analysis and calculation, we can obtain the following:

\begin{lemma}\label{Indicator}
 For any $x_0\in E$, the indicator function $\delta_{x_0}$ is a member of ${\mathscr C}(E)$ with $dom(\delta_{x_0})=\{x_0\}$. The random conjugate $\delta^*_{x_0}: E^*\to \bar L^0({\mathcal F})$ is given by $\delta^*_{x_0}(u)=\langle u, x_0\rangle, \forall u\in E^*$, and the bi-random conjugate $\delta^{**}_{x_0}:E^{**}\to \bar L^0({\mathcal F})$ is given by $\delta^{**}_{x_0}(x^{**})=0\cdot I_{[\|x^{**}-x_0\|=0]}+\infty\cdot I_{[\|x^{**}- x_0\|\neq 0]}=\delta_{x_0}(x^{**}), \forall x^{**}\in E^{**}.$
 \end{lemma}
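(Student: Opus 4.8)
The plan is to verify the three assertions in order: first that $\delta_{x_0}\in\mathscr{C}(E)$ with $dom(\delta_{x_0})=\{x_0\}$, then the formula for $\delta^*_{x_0}$, and finally the formula for $\delta^{**}_{x_0}$. For membership in $\mathscr{C}(E)$, I would check the four conditions of Definition \ref{definition1.4} one at a time. Properness is immediate: $\delta_{x_0}$ never takes the value $-\infty$, and $\delta_{x_0}(x_0)=0<+\infty$ on $\Omega$, so $x_0\in dom(\delta_{x_0})$. Conversely, if $x\neq x_0$ then $\|x-x_0\|\neq 0$, i.e. $\|x-x_0\|>0$ on some $A\in\mathcal{F}_+$, hence $\delta_{x_0}(x)=+\infty$ on $A$ and $x\notin dom(\delta_{x_0})$; this gives $dom(\delta_{x_0})=\{x_0\}$.

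For $L^0$-convexity, take $x,y\in E$ and $\xi\in L^0_+(\mathcal{F})$ with $0\leq\xi\leq 1$. The key observation is the set inclusion $[\|\xi x+(1-\xi)y-x_0\|\neq 0]\subseteq [\|x-x_0\|\neq 0]\cup[\|y-x_0\|\neq 0]$ up to a null set, equivalently $[\|x-x_0\|=0]\cap[\|y-x_0\|=0]\subseteq[\|\xi x+(1-\xi)y-x_0\|=0]$, which follows from $\xi x+(1-\xi)y-x_0=\xi(x-x_0)+(1-\xi)(y-x_0)$ together with the triangle inequality and $L^0$-homogeneity of the norm. On the set where both $\|x-x_0\|=0$ and $\|y-x_0\|=0$ the right-hand side $\xi\delta_{x_0}(x)+(1-\xi)\delta_{x_0}(y)$ is $0$ and so is the left-hand side; elsewhere the right-hand side is $+\infty$ on a set of positive measure intersecting every such component, so the inequality holds trivially there (using the convention $0\cdot(+\infty)=0$ to handle, e.g., $\xi=0$ on part of $\Omega$). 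A careful bookkeeping by countable partition of $\Omega$ according to the truth values of $[\|x-x_0\|=0]$, $[\|y-x_0\|=0]$, $[\xi=0]$, $[\xi=1]$ makes this rigorous. For closedness of $epi(\delta_{x_0})$: one has $epi(\delta_{x_0})=\{(x,r): \|x-x_0\|=0 \text{ a.s.},\ r\geq 0\}$, and since $x\mapsto\|x-x_0\|$ is continuous for the $(\varepsilon,\lambda)$-topology and $r\mapsto r$ is continuous, this is the intersection of the closed sets $\{(x,r):\|x-x_0\|=0\}$ and $\{(x,r):r\geq 0\}$ in $E\times L^0(\mathcal{F})$, hence closed. (Closedness of $\{(x,r):\|x-x_0\|=0\}$ follows because if $(x_\alpha,r_\alpha)\to(x,r)$ then $\|x_\alpha-x_0\|\to\|x-x_0\|$ in probability, and if each $\|x_\alpha-x_0\|=0$ then $\|x-x_0\|=0$.)

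For the conjugate, unwind the definition: $\delta^*_{x_0}(u)=\vee\{u(x)-\delta_{x_0}(x): x\in E\}$. Since $dom(\delta_{x_0})=\{x_0\}$ one is tempted to say the supremum is attained only at $x=x_0$, but the subtlety flagged in the text is that $\delta_{x_0}(x)$ can equal $0$ on a set $A\in\mathcal{F}_+$ even when $x\neq x_0$; on such $A$ one has $\|x-x_0\|=0$, hence $u(x)=u(x_0)$ on $A$ because $u$ is a module homomorphism dominated in $L^0$-norm by $\|u\|\|x-x_0\|=0$ there, so $u(x)-\delta_{x_0}(x)=u(x_0)$ on $A$ as well. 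Thus for any $x\in E$ we have $u(x)-\delta_{x_0}(x)\leq\langle u,x_0\rangle$ (equal on $[\|x-x_0\|=0]$, equal to $-\infty$ on $[\|x-x_0\|\neq 0]$), and taking $x=x_0$ shows the bound is achieved; hence $\delta^*_{x_0}(u)=\langle u,x_0\rangle$ for all $u\in E^*$. Finally, the bi-random conjugate $\delta^{**}_{x_0}$ is computed by applying the same argument to the linear functional $\delta^*_{x_0}=\langle\cdot,x_0\rangle$ on $E^*$: $\delta^{**}_{x_0}(x^{**})=\vee\{x^{**}(u)-\langle u,x_0\rangle: u\in E^*\}=\vee\{\langle x^{**}-x_0,u\rangle: u\in E^*\}$ where $x_0$ is viewed in $E^{**}$ via the natural embedding; this supremum equals $0$ on $[\|x^{**}-x_0\|=0]$ and $+\infty$ on $[\|x^{**}-x_0\|\neq 0]$ — the latter because on a set of positive measure where $\|x^{**}-x_0\|>0$ one can, by Proposition \ref{support}(1) applied on that set (after restricting to the corresponding smaller probability space), find $u$ making $\langle x^{**}-x_0,u\rangle$ arbitrarily large there, while the $L^0$-norm bound keeps things finite on the complement — giving exactly $\delta_{x_0}(x^{**})$.

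The main obstacle is the one the authors explicitly anticipate: the failure of the clean identity "$dom=\{x_0\}$ implies the conjugate is attained only at $x_0$" because of the partial-order structure of $L^0(\mathcal{F})$ and the possibility $\|x-x_0\|=0$ on a proper subset of $\Omega$. Handling this requires consistently arguing locally on sets of the form $[\|x-x_0\|=0]$ and $[\|x-x_0\|\neq 0]$ and using that module homomorphisms respect the $L^0$-module structure so that $u(x)=u(x_0)$ wherever $\|x-x_0\|=0$. The closedness of $epi(\delta_{x_0})$ and the local attainment argument in the $\delta^{**}$ computation (where one passes to a smaller base probability space to invoke Proposition \ref{support}) are the other places needing care, but are otherwise routine.
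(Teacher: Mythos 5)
The paper gives no proof of Lemma \ref{Indicator} (it is dismissed with ``by simple analysis and calculation''), and your direct verification is correct and is exactly the kind of argument intended: the case analysis for $L^0$-convexity via the convention $0\cdot(+\infty)=0$, the observation that $u(x)=u(x_0)$ on $[\|x-x_0\|=0]$ because $\tilde I_A(x-x_0)=\theta$ there, and the localization to $[\|x^{**}-x_0\|=0]$ and its complement are all sound. The one imprecise point is your appeal to Proposition \ref{support}(1) for the $+\infty$ part of $\delta^{**}_{x_0}$: that statement only produces a norm-one element of $E$ and a norming functional for it, which is not what is needed; the correct justification is the identity $\|z\|=\vee\{|\langle z,u\rangle|: u\in E^*,\ \|u\|\leq 1\}$ for $z=x^{**}-x_0\in E^{**}=(E^*)^*$ (the standard norm formula for a.s.\ bounded module homomorphisms from \cite{Guo-Modulehomo}, already used in the paper's proof of Proposition \ref{support}(2)), combined with replacing $u$ by $-u$ to drop the absolute value and scaling $u\mapsto nu$ to drive the supremum to $+\infty$ precisely on $[\|z\|>0]$, so this is a repairable slip rather than a gap.
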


 Now, we turn to $L^0$-affine functions.

A function $h: E\to L^0({\mathcal F})$ is called an $L^0$-affine function, if there exists a pair $(u,\alpha)\in E^*\times L^0({\mathcal F})$ such that $h(x)=\langle u,x\rangle+\alpha, \forall x\in E$. From now on, such an $L^0$-affine function $h$ will be written as $h_{u,\alpha}$. Denote by ${\mathscr A}(E)$ the set of $L^0$-affine functions on $E$. Obviously, ${\mathscr A}(E)$ is a subset of ${\mathscr C}(E)$.

Given a family of functions $f_i: E\to \bar L^0({\mathcal F}), i\in I$, define $\vee_{i\in I} f_i: E\to \bar L^0({\mathcal F})$ by $[\vee_{i\in I} f_i](x)=\vee\{f_i(x):i\in I\}, \forall x\in E$. Specially we write $f\vee g$ for $\vee\{f,g\}$.

For $f\in {\mathscr C}(E)$, define ${\mathscr A}(f)\subset {\mathscr A}(E)$ as ${\mathscr A}(f)=\{h\in {\mathscr A}(E): h\leq f\}$.

We will present a series of useful properties for $L^0$-affine functions in the sequel of this section.

Corollary \ref{Affsup} below is indeed obtained in the proof of random Fenchel-Moreau duality theorem \cite[Theorem 5.1]{GZZ-RCA1}(see also \cite{GZWY}). Now we deduce it from the random Fenchel-Moreau duality theorem.

 Fix $f\in {\mathscr C}(E)$. For any $u\in E^*$, it is easy to verify that $$\vee\{\langle u,x\rangle-f(x):x\in E\}=\vee\{\langle u,x\rangle-f(x): x\in dom(f)\},$$
 therefore $$f^*(u)=\vee\{\langle u,x\rangle-f(x): x\in dom(f)\},\forall u\in E^*.$$ Similarly, we have $$f^{**}_E(x)=\vee\{\langle u,x\rangle-f^*(u): u\in dom(f^*)\},\forall x\in E.$$
 By the random Fenchel-Moreau duality theorem, namely $f_E^{**}=f$, we can see the following:

\begin{corollary}\label{Affsup}
 For any $f\in {\mathscr C}(E)$, ${\mathscr A}(f)$ is nonempty and $f=\vee\{h: h\in{\mathscr A}(f)\}$.
\end{corollary}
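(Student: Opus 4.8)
The plan is to produce an explicit family of $L^0$-affine minorants of $f$ out of the random Fenchel conjugate $f^*$, and then to read off both assertions from the two displayed identities preceding the statement. First I would record that $f^*$ is a proper random $w^*$-lower semicontinuous $L^0$-convex function on $E^*$, so $dom(f^*)\neq\emptyset$; moreover, for each $u\in dom(f^*)$ we have $f^*(u)<+\infty$ on $\Omega$ by the definition of $dom(f^*)$, while choosing any $x_0\in dom(f)$ gives $f^*(u)\geq \langle u,x_0\rangle-f(x_0)>-\infty$ on $\Omega$; hence $f^*(u)\in L^0(\mathcal{F})$, and the $L^0$-affine function $h_u:=h_{u,-f^*(u)}$, defined by $h_u(x)=\langle u,x\rangle-f^*(u)$, is a genuine member of $\mathscr{A}(E)$.

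Next I would verify that $h_u\in\mathscr{A}(f)$ for every $u\in dom(f^*)$: this is just the (random) Fenchel--Young inequality, since $f^*(u)=\vee\{\langle u,x\rangle-f(x):x\in E\}\geq \langle u,x\rangle-f(x)$ for every $x\in E$, whence $h_u(x)=\langle u,x\rangle-f^*(u)\leq f(x)$ for all $x\in E$, i.e. $h_u\leq f$. Since $dom(f^*)$ is nonempty, this already shows $\mathscr{A}(f)\neq\emptyset$.

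It remains to prove $f=\vee\{h:h\in\mathscr{A}(f)\}$, where the supremum is computed pointwise in the complete lattice $\bar{L}^0(\mathcal{F})$. The inequality $\vee\{h:h\in\mathscr{A}(f)\}\leq f$ is immediate from the definition of $\mathscr{A}(f)$. For the reverse, since $\{h_u:u\in dom(f^*)\}\subseteq\mathscr{A}(f)$, for every $x\in E$ we get
$$\vee\{h(x):h\in\mathscr{A}(f)\}\geq \vee\{\langle u,x\rangle-f^*(u):u\in dom(f^*)\}=f_E^{**}(x)=f(x),$$
where the middle equality is the displayed formula for $f_E^{**}$ and the last one is the random Fenchel--Moreau duality theorem $f_E^{**}=f$. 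Combining the two inequalities gives the corollary.

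I do not expect a real obstacle here: once the random Fenchel--Moreau theorem and the reductions of the two conjugation suprema to $dom(f)$ and $dom(f^*)$ are taken as given (all recalled in the excerpt), the argument is essentially bookkeeping. The only point needing a moment's care is the finiteness of $f^*(u)$ on all of $\Omega$ for $u\in dom(f^*)$ --- that is, that the minorants $h_u$ really take values in $L^0(\mathcal{F})$ and not merely in $\bar{L}^0(\mathcal{F})$ --- which is exactly where properness of $f$ (equivalently, of $f^*$) enters.
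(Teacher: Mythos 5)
Your proof is correct and follows essentially the same route as the paper: the paper also deduces the corollary from the random Fenchel--Moreau duality theorem via the family $\{h_{u,-f^*(u)}:u\in dom(f^*)\}$ of affine minorants, using the reductions of the conjugation suprema to $dom(f)$ and $dom(f^*)$. Your write-up merely makes explicit the properness/finiteness bookkeeping that the paper leaves implicit.
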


Using $L^0$-affine functions, we can give a useful characterization for the bi-random conjugate $f^{**}$, which is an analogue of \cite[Proposition 10 (ii)]{IRS}.

\begin{proposition}\label{bi-conjugate}
For any $f\in {\mathscr C}(E)$ and every $a\in E^{**}$, $f^{**}(a)=\vee\{\langle a, u\rangle+\alpha: h_{u,\alpha}\in {\mathscr A}(f)\}$.
\end{proposition}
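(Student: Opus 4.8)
The plan is to deduce this from the definition $f^{**}(a) = \vee\{\langle a,u\rangle - f^*(u): u\in E^*\}$ together with the characterization of $f^*$ in terms of the affine minorants of $f$. First I would observe that for a fixed $u\in E^*$, the value $-f^*(u)$ is precisely the largest $\alpha\in \bar L^0(\mathcal F)$ for which the affine function $h_{u,\alpha} = \langle u,\cdot\rangle + \alpha$ lies below $f$; indeed, $h_{u,\alpha}\le f$ means $\langle u,x\rangle + \alpha \le f(x)$ for all $x\in E$, i.e. $\alpha \le f(x) - \langle u,x\rangle$ for all $x$, i.e. $\alpha \le -\vee\{\langle u,x\rangle - f(x): x\in E\} = -f^*(u)$. (One must take a moment to note that when $f^*(u) = +\infty$ there is no such $\alpha\in L^0(\mathcal F)$, and when $f^*(u)\in L^0(\mathcal F)$, the value $\alpha = -f^*(u)$ itself gives $h_{u,-f^*(u)}\in \mathscr A(f)$. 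On a set where $f^*(u)$ takes the value $+\infty$ only on a part of $\Omega$, a localization/patching argument is needed, but since $h_{u,\alpha}\in\mathscr A(E)$ requires $\alpha\in L^0(\mathcal F)$, one restricts attention to $u\in dom(f^*)$.)

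Next I would use the fact, noted just before Corollary \ref{Affsup}, that $f^*(u) = \vee\{\langle u,x\rangle - f(x): x\in dom(f)\}$ and that $f_E^{**}(x) = \vee\{\langle u,x\rangle - f^*(u): u\in dom(f^*)\}$, so that in all the relevant suprema one may restrict to $u\in dom(f^*)$, where $f^*(u)\in L^0(\mathcal F)$ and hence $h_{u,-f^*(u)}\in\mathscr A(f)$ is a genuine $L^0$-affine function. Combining the two observations: for $u\in dom(f^*)$,
$$\langle a,u\rangle - f^*(u) = \langle a,u\rangle + \alpha_u, \quad \text{where } \alpha_u = -f^*(u) \text{ and } h_{u,\alpha_u}\in\mathscr A(f),$$
which shows $f^{**}(a) = \vee\{\langle a,u\rangle - f^*(u): u\in dom(f^*)\} \le \vee\{\langle a,u\rangle + \alpha: h_{u,\alpha}\in\mathscr A(f)\}$. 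For the reverse inequality, whenever $h_{u,\alpha}\in\mathscr A(f)$ we have $\alpha \le -f^*(u)$ by the first observation (in particular $u\in dom(f^*)$), so $\langle a,u\rangle + \alpha \le \langle a,u\rangle - f^*(u) \le f^{**}(a)$; taking the supremum over all such pairs gives the other direction, and the two inequalities yield equality.

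I expect the main subtlety — rather than a deep obstacle — to be the careful handling of the partial order on $\bar L^0(\mathcal F)$: specifically, verifying that ``$h_{u,\alpha}\le f$ for some $\alpha\in L^0(\mathcal F)$'' forces $u\in dom(f^*)$ (so that $f^*(u)$ is finite on all of $\Omega$, not merely finite somewhere), and that the supremum defining $f^*(u)$ really is attained-from-above by $\alpha = -f^*(u)$ in the lattice $\bar L^0(\mathcal F)$. Both points follow from Proposition \ref{proposition1.1} (the countable-supremum property) and the definition of $\le$ via representatives, exactly as in the classical case; the stability of $\mathscr C(E)$ is not needed here. Apart from this bookkeeping, the argument is a direct transcription of the classical duality between affine minorants and the biconjugate, and reduces entirely to the random Fenchel--Moreau duality theorem already invoked for Corollary \ref{Affsup}.
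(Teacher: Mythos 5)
Your proposal is correct and follows essentially the same route as the paper's proof: both identify the affine minorants of $f$ as exactly the functions $h_{u,\alpha}$ with $u\in dom(f^*)$ and $\alpha\leq -f^*(u)$, and then observe that the two suprema coincide. The paper phrases this as an identification of index sets rather than as a pair of inequalities, but the content (including the check that $h_{u,\alpha}\leq f$ forces $f^*(u)\in L^0(\mathcal F)$) is the same.
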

\begin{proof}
By definition, $f^{**}(a)=\vee\{\langle a, u\rangle-f^*(u): u\in E^*\}=\vee\{\langle a, u\rangle-f^*(u): u\in dom (f^*)\}$, also $f^*(u)=\vee\{\langle u, x\rangle-f(x):x\in E\}$, so that for each $u\in dom (f^*)$, $h_{u,-f^*(u)}(x)=\langle u, x\rangle-f^*(u)$ for all $x\in E$, and hence $h_{u,-f^*(u)}$ belongs to ${\mathscr A}(f)$ for all $u\in dom (f^*)$. Next, we prove that if $h_{u,\alpha}$ belongs to
${\mathscr A}(f)$ then $u\in dom (f^*)$ and $\alpha\leq -f^*(u)$. Indeed, if $h_{u,\alpha}\in {\mathscr A}(f)$, then for all $x\in E$, $\langle u, x\rangle+\alpha\leq f(x)$, therefore $\alpha\leq \wedge\{f(x)-\langle u, x\rangle: x\in E\}\in L^0({\mathcal F})$, then $f^*(u)=\vee\{\langle u, x\rangle-f(x):x\in E\}=-\wedge\{f(x)-\langle u, x\rangle: x\in E\}\in L^0({\mathcal F})$ and $-f^*(u)\geq \alpha$. We conclude that
$\vee\{\langle a, u\rangle+\alpha: h_{u,\alpha}\in {\mathscr A}(f)\}=\vee\{\langle a, u\rangle-f^*(u): u\in dom (f^*)\}=f^{**}(a)$.
\end{proof}

We present a comparison theorem for two $L^0$-affine functions as follows.

\begin{proposition}\label{comparison}
 Suppose that $h_{u,\alpha}$ and $h_{v,\beta}$ are two $L^0$-affine functions on $E$ such that $h_{v,\beta}\leq h_{u,\alpha}$, then $v=u$ and $\beta\leq\alpha$.
\end{proposition}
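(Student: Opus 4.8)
The plan is to exploit the inequality $h_{v,\beta}\le h_{u,\alpha}$ by testing it along lines, which will force the linear parts to agree and then the constants to be comparable. Concretely, the hypothesis says $\langle v,x\rangle+\beta\le\langle u,x\rangle+\alpha$ for every $x\in E$, i.e. $\langle u-v,x\rangle\ge\beta-\alpha$ for all $x\in E$. Writing $w=u-v\in E^*$ and $\gamma=\beta-\alpha\in L^0(\mathcal F)$, it suffices to show that $\langle w,x\rangle\ge\gamma$ for all $x\in E$ implies $w=0$ (whence $u=v$ and, taking $x=\theta$, $\gamma\le 0$, i.e. $\beta\le\alpha$).

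First I would observe that a module homomorphism is "homogeneous enough" to rule out any nonzero value: since $w$ is $L^0(\mathcal F,\mathbb R)$-linear, for any $x\in E$ and any $\xi\in L^0(\mathcal F)$ we have $\langle w,\xi x\rangle=\xi\langle w,x\rangle$, so the constraint gives $\xi\langle w,x\rangle\ge\gamma$ for every $\xi\in L^0(\mathcal F)$. Fixing $x$ and letting $\xi$ range over, say, $n\cdot I_{[\langle w,x\rangle<0]}-n\cdot I_{[\langle w,x\rangle\ge 0]}$ (or more simply $-n$ for all $n\in\mathbb N$ on the set where $\langle w,x\rangle\ne 0$), the quantity $\xi\langle w,x\rangle$ is unbounded below on the set $[\langle w,x\rangle\ne 0]$ unless that set is null. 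Hence $\langle w,x\rangle=0$ on $\Omega$ for every $x\in E$, i.e. $w=0$ as an element of $E^*$. Then $u=v$, and evaluating $\beta=h_{v,\beta}(\theta)\le h_{u,\alpha}(\theta)=\alpha$ finishes the argument.

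I do not expect any serious obstacle here; the only point requiring a little care is the use of the $L^0$-linearity (module homomorphism property) of $v$ and $u$ rather than mere $\mathbb R$-linearity, since it is the ability to scale by arbitrary $L^0(\mathcal F)$-valued, in particular unbounded, coefficients that makes the one-sided bound collapse. An alternative, essentially equivalent route is to note that $\langle u,\cdot\rangle-\langle v,\cdot\rangle$ is a module homomorphism bounded below, and a bounded-below module homomorphism into $L^0(\mathcal F)$ must vanish; this is the same computation phrased invariantly. Either way the proof is short, and the substantive content is simply the translation-and-scaling trick applied to the affine inequality.
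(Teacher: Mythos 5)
Your argument is correct and is essentially the paper's own proof in a slightly different guise: the paper replaces $x$ by $nx$, divides by $n$ and lets $n\to\infty$ to isolate the linear parts (then uses $x\mapsto -x$ to upgrade the inequality $\langle v,x\rangle\le\langle u,x\rangle$ to equality), while you scale by large coefficients of both signs to show the difference $u-v$ cannot be nonzero on a set of positive measure; both are the same translation-and-scaling trick, and taking $x=\theta$ gives $\beta\le\alpha$ in either version. One small remark: ordinary $\mathbb{R}$-homogeneity of $u-v$ already suffices for your unboundedness argument, so the appeal to full $L^0(\mathcal F)$-linearity, while harmless, is not actually needed.
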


\begin{proof}
According to definition, $h_{v,\beta}\leq h_{u,\alpha}$ means that $\langle v,x\rangle+\beta\leq \langle u,x\rangle+\alpha, \forall x\in E$. Taking $x=\theta$ yields that $\beta\leq\alpha$. For any $n\in \mathbb N$, replacing $x$ with $nx$, we can deduce that $\langle v,x\rangle+\frac{\beta}{n}\leq \langle u,x\rangle+\frac{\alpha}{n}, \forall x\in E$. Letting $n$ tend to $\infty$, we obtain that $\langle v,x\rangle\leq \langle u,x\rangle, \forall x\in E$. Then replacing $x$ with $-x$ we get $-\langle v,x\rangle\leq -\langle u,x\rangle, \forall x\in E$. Therefore $\langle v,x\rangle= \langle u,x\rangle, \forall x\in E$, namely $v=u$.
\end{proof}

A useful characterization for $f\in {\mathscr C}(E)$ to belong to ${\mathscr A}(E)$ is given as follows.

\begin{proposition}\label{AffCha}
 Given $f\in {\mathscr C}(E)$, then $f$ belongs to ${\mathscr A}(E)$ iff there exists a unique pair $(u,\alpha)\in E^*\times L^0({\mathcal F})$ such that ${\mathscr A}(f)=\{h_{u,\beta}: \beta\leq \alpha\}$.
\end{proposition}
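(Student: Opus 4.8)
The plan is to prove both directions by leveraging the comparison theorem for $L^0$-affine functions (Proposition \ref{comparison}) and the representation $f = \vee\{h : h\in\mathscr{A}(f)\}$ from Corollary \ref{Affsup}. For the ``only if'' direction, suppose $f = h_{u,\alpha}$ for some $(u,\alpha)\in E^*\times L^0(\mathcal{F})$. Then $h_{v,\beta}\leq f = h_{u,\alpha}$ holds iff, by Proposition \ref{comparison}, $v = u$ and $\beta\leq\alpha$; hence $\mathscr{A}(f) = \{h_{u,\beta} : \beta\leq\alpha\}$ immediately. Uniqueness of the pair is also clear: if $\mathscr{A}(f) = \{h_{u',\beta'} : \beta'\leq\alpha'\}$ as well, then $h_{u,\alpha}\in\mathscr{A}(f)$ and $h_{u',\alpha'}\in\mathscr{A}(f)$, so applying Proposition \ref{comparison} twice (to $h_{u,\alpha}\leq h_{u',\alpha'}$ and the reverse) forces $u = u'$ and $\alpha = \alpha'$.

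For the ``if'' direction, assume there is a pair $(u,\alpha)$ with $\mathscr{A}(f) = \{h_{u,\beta} : \beta\leq\alpha\}$. By Corollary \ref{Affsup}, $f = \vee\{h : h\in\mathscr{A}(f)\} = \vee\{h_{u,\beta} : \beta\leq\alpha\}$. The key observation is that this supremum is attained at $\beta = \alpha$: for every $x\in E$ we have $h_{u,\beta}(x) = \langle u,x\rangle + \beta \leq \langle u,x\rangle + \alpha = h_{u,\alpha}(x)$, so $\vee\{h_{u,\beta}(x) : \beta\leq\alpha\} = h_{u,\alpha}(x)$. Therefore $f = h_{u,\alpha}$ pointwise on $E$, which is precisely the statement that $f\in\mathscr{A}(E)$.

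I do not anticipate a serious obstacle here; the statement is essentially a bookkeeping consequence of the two earlier propositions. The one point requiring a little care is making sure that $\mathscr{A}(f)$ is genuinely nonempty and that $\alpha\in L^0(\mathcal{F})$ (not merely in $\bar{L}^0(\mathcal{F})$) in the ``if'' direction — but this is guaranteed because the hypothesis already presents $\mathscr{A}(f)$ in the form $\{h_{u,\beta} : \beta\leq\alpha\}$ with $(u,\alpha)\in E^*\times L^0(\mathcal{F})$, and Corollary \ref{Affsup} asserts nonemptiness of $\mathscr{A}(f)$ for every $f\in\mathscr{C}(E)$. One should also note in the ``only if'' direction that $\mathscr{A}(f)$ being of the stated form is exactly what Proposition \ref{comparison} delivers, so no additional lattice-theoretic input about $\bar{L}^0(\mathcal{F})$ is needed.
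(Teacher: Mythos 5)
Your proposal is correct and follows essentially the same route as the paper: the necessity direction is Proposition \ref{comparison} applied to $h_{v,\beta}\leq h_{u,\alpha}$, and the sufficiency direction computes $f=\vee\{h_{u,\beta}:\beta\leq\alpha\}=h_{u,\alpha}$ via Corollary \ref{Affsup}. Your explicit verification of uniqueness of the pair is a small additional detail the paper leaves implicit, but it does not change the argument.
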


\begin{proof}
 (1). Necessity. If $f\in {\mathscr A}(E)$, then there exists a unique pair $(u,\alpha)\in E^*\times L^0({\mathcal F})$ such that $f=h_{u,\alpha}$. If $h_{v,\beta}\in {\mathscr A}(E)$ satisfies that $h_{v,\beta}\leq h_{u,\alpha}$, then it follows from Proposition \ref{comparison} that $v=u$ and $\beta\leq \alpha$, thus ${\mathscr A}(f)=\{h_{u,\beta}: \beta\leq \alpha\}$.\\
(2). Sufficiency. According to Corollary \ref{Affsup},  ${\mathscr A}(f)$ is nonempty and $f=\vee\{h: h\in {\mathscr A}(f)\}$. If there exists a unique pair $(u,\alpha)\in E^*\times L^0({\mathcal F})$ such that ${\mathscr A}(f)=\{h_{u,\beta}: \beta\leq \alpha\}$, then $f =\vee\{h_{u,\beta}: \beta\leq \alpha\}=h_{u,\alpha}\in {\mathscr A}(E)$.
\end{proof}

\begin{corollary}
Assume that $h_{u,\alpha}\in {\mathscr A}(E)$ and $f\in {\mathscr C}(E)$. If $ f \leq ¡Üh_{u,\alpha}$, then $f=h_{u,\beta}$ for some $\beta\leq \alpha$.
\end{corollary}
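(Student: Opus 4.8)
The plan is to reduce everything to the structure of the set of $L^0$-affine minorants $\mathscr{A}(f)$, using Corollary \ref{Affsup} (which gives $\mathscr{A}(f)\ne\emptyset$ and $f=\vee\{h:h\in\mathscr{A}(f)\}$) and the comparison Proposition \ref{comparison}. First I would record the harmless observation that, since $f\le h_{u,\alpha}$ and $f>-\infty$ on $\Omega$ (because $f\in\mathscr{C}(E)$), $f$ is actually $L^0(\mathcal{F})$-valued, so $dom(f)=E$; this is not strictly needed but keeps later manipulations inside $L^0(\mathcal{F})$.

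The first real step is to pin down the shape of $\mathscr{A}(f)$. If $h_{v,\gamma}\in\mathscr{A}(f)$, then $h_{v,\gamma}\le f\le h_{u,\alpha}$, so Proposition \ref{comparison} forces $v=u$ and $\gamma\le\alpha$. Hence $\mathscr{A}(f)=\{h_{u,\gamma}:\gamma\in S\}$, where $S:=\{\gamma\in L^0(\mathcal{F}):h_{u,\gamma}\le f\}$ is nonempty by Corollary \ref{Affsup} and bounded above by $\alpha$.

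The second step handles the partial order of $L^0(\mathcal{F})$, which is where a one-line supremum-of-reals argument in the Banach-space case of \cite{IRS} is replaced by something slightly more careful. I would check that $S$ is directed upward and downward closed: downward closedness is immediate since $h_{u,\gamma'}\le h_{u,\gamma}\le f$ whenever $\gamma'\le\gamma$; and if $\gamma_1,\gamma_2\in S$, then $h_{u,\gamma_1\vee\gamma_2}\le f$ because on $[\gamma_1\ge\gamma_2]$ it coincides a.s.\ with $h_{u,\gamma_1}$ and on the complement with $h_{u,\gamma_2}$, and the order between functions into $\bar L^0(\mathcal{F})$ is tested a.s.\ pointwise. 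By Proposition \ref{proposition1.1}(3), $\beta:=\vee S\in L^0(\mathcal{F})$ and $\beta\le\alpha$. Then $\beta\in S$: for each $x\in E$, $h_{u,\gamma}\le f$ for all $\gamma\in S$ gives $\langle u,x\rangle+\gamma\le f(x)$, whence $\langle u,x\rangle+\beta=\vee\{\langle u,x\rangle+\gamma:\gamma\in S\}\le f(x)$, i.e.\ $h_{u,\beta}\le f$. Consequently $S=\{\gamma\in L^0(\mathcal{F}):\gamma\le\beta\}$ and $\mathscr{A}(f)=\{h_{u,\gamma}:\gamma\le\beta\}$.

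Finally, applying Corollary \ref{Affsup} once more, for every $x\in E$ we get $f(x)=\vee\{h(x):h\in\mathscr{A}(f)\}=\vee\{\langle u,x\rangle+\gamma:\gamma\le\beta\}=\langle u,x\rangle+\beta$, using that adding a fixed element of $\bar L^0(\mathcal{F})$ commutes with $\vee$; hence $f=h_{u,\beta}$ with $\beta\le\alpha$. (Alternatively, once $\mathscr{A}(f)=\{h_{u,\gamma}:\gamma\le\beta\}$ is established one may simply quote Proposition \ref{AffCha}.) The only mildly delicate point in the whole argument is the second step: verifying that $S$ is directed upward and, above all, that its supremum in $L^0(\mathcal{F})$ is \emph{attained}, which is exactly where the non-totally-ordered nature of $L^0(\mathcal{F})$ demands more care than in the real-valued setting.
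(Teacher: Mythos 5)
Your proof is correct and follows essentially the same route as the paper's: Proposition \ref{comparison} forces every affine minorant of $f$ to be of the form $h_{u,\gamma}$ with $\gamma\leq\alpha$, and then $\beta$ is taken to be the supremum of the admissible constants, which lies in $L^0(\mathcal F)$ by Dedekind completeness and satisfies $f=h_{u,\beta}$ via Corollary \ref{Affsup}. The only difference is that you spell out the final step (that the supremum commutes with translation by $\langle u,x\rangle$ and is attained), which the paper dismisses as ``easy to see.''
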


\begin{proof}
   If $h_{u^\prime,\alpha^\prime}\in {\mathscr A}(f)$, then $h_{u^\prime,\alpha^\prime}\leq f\leq  h_{u,\alpha}$, i.e. $h_{u^\prime,\alpha^\prime}\in {\mathscr A}(h_{u,\alpha})$. By Proposition \ref{comparison}, $u^\prime=u$ and $\alpha^\prime \leq \alpha$. Let $\beta=\vee\{\alpha^\prime: h_{u^\prime,\alpha^\prime}\in {\mathscr A}(f)\}$, then $\beta\leq \alpha$, thus $\beta\in L^0({\mathcal F})$. It is easy to see that  $f=h_{u,\beta}$.
\end{proof}

Proposition \ref{subdiff} below is substantial in {\it Step 2} of the proof of the following Proposition \ref{line-line}, which is the foundation of our proof of Theorem \ref{OP}. To prove Proposition \ref{subdiff}, we are forced to use a separation theorem that relies on the random $w^*$-closedness of an $L^0$-convex subset of $E^*$. Lemma \ref{Wsclosed} below is an auxiliary result which says that the $L^0$-convex hull of any finite subset of an $RLC$ module is closed. Since $L^0$-convex sets plays a crucial role in random convex analysis on $RLC$ modules, Lemma \ref{Wsclosed} is a by-product which may be interesting on its own.

\begin{lemma}\label{Wsclosed}
 Let $(E, \mathcal{P})$ be an $RLC$ module with base $(\Omega,{\mathcal F},P)$. Then for any finite subset $\{x_1,x_2, \dots, x_d\}$ of $E$, its $L^0$-convex hull $Conv_{L^0}\{x_1,x_2, \dots, x_d\}$
 $$:= \{\lambda_1 x_1+\lambda_2x_2+\cdots+\lambda_dx_d: \lambda_i\in L^0_+({\mathcal F}),i=1,2,\dots,d, \sum^d_{i=1}\lambda_i=1 \} $$
is a complete thus closed subset of $E$.
\end{lemma}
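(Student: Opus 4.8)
The plan is to verify completeness of $K:=Conv_{L^0}\{x_1,\dots,x_d\}$ directly, i.e.\ to show that every Cauchy net $\{y_\gamma\}_{\gamma\in\Gamma}$ in $K$ converges, in the $(\varepsilon,\lambda)$-topology, to a point of $K$. This suffices, since a subset of a topological module is complete precisely when every Cauchy net in it converges within the subset, and $E$ itself is not assumed complete (when $(E,\mathcal{P})$ happens to be an $RN$ module one may equivalently work with Cauchy sequences, the topology then being metrizable). I would parametrize $K$ by the ``$L^0$-simplex'' $\Lambda_d:=\{\lambda=(\lambda_1,\dots,\lambda_d)\in(L^0_+(\mathcal{F}))^d:\sum_{i=1}^d\lambda_i=1\}$ through the map $\Phi:\Lambda_d\to E$, $\Phi(\lambda)=\sum_{i=1}^d\lambda_i x_i$, so that $K=\Phi(\Lambda_d)$. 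Two facts are immediate: first, $\Lambda_d$ is a closed subset of $(L^0(\mathcal{F}))^d$, which is complete in its metrizable product topology, hence $\Lambda_d$ is itself complete; second, $\Phi$ is continuous, since $\|\Phi(\lambda)-\Phi(\mu)\|\le\big(\vee_{i=1}^d\|x_i\|\big)\sum_{i=1}^d|\lambda_i-\mu_i|$ for every $\|\cdot\|\in\mathcal{P}$. Consequently, if for a given Cauchy net $\{y_\gamma\}$ one can choose representing tuples $\lambda^\gamma\in\Lambda_d$ (with $y_\gamma=\Phi(\lambda^\gamma)$) so that $\{\lambda^\gamma\}$ is Cauchy in $(L^0(\mathcal{F}))^d$, then $\lambda^\gamma\to\lambda^*\in\Lambda_d$ and, by continuity, $y_\gamma\to\Phi(\lambda^*)\in K$, which is what is needed.

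The whole difficulty is the non-injectivity of $\Phi$: when the $x_i$ are $L^0$-linearly dependent, possibly only over some region $A\in\mathcal{F}$, the obvious coefficients need not be Cauchy even though $\{y_\gamma\}$ is. I would argue by induction on $d$. The case $d=1$ is trivial ($K=\{x_1\}$), and the case $d=2$ is already the informative one: here $y_\gamma=x_2+\lambda_1^\gamma(x_1-x_2)$ with $z:=x_1-x_2$ fixed; letting $\tilde B$ be the essential support of $z$, that is $\tilde B=\vee\{[\|z\|\neq 0]:\|\cdot\|\in\mathcal{P}\}$ (this supremum of equivalence classes of events exists, and by Proposition \ref{proposition1.1} is attained by a countable subfamily $\{\|\cdot\|_n\}\subseteq\mathcal{P}$, say $\tilde B=\vee_n[\|z\|_n>0]$), one has $\tilde I_{\tilde B^c}z=\theta$, while on $\tilde B$ the inequalities $\tilde I_{[\|z\|_n>\delta]}|\lambda_1^\gamma-\lambda_1^{\gamma'}|\le\delta^{-1}\|y_\gamma-y_{\gamma'}\|_n$, together with $[\|z\|_n>\delta]\uparrow[\|z\|_n>0]$ as $\delta\downarrow 0$ and the uniform bound $|\lambda_1^\gamma|\le 1$, show that $\{\tilde I_{\tilde B}\lambda_1^\gamma\}_\gamma$ is Cauchy in $L^0(\mathcal{F})$. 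Its limit $\lambda_1^*$ (extended by $0$ off $\tilde B$) lies in $\{\xi\in L^0_+(\mathcal{F}):\xi\le 1\}$ and satisfies $y_\gamma\to x_2+\lambda_1^*z=(1-\lambda_1^*)x_2+\lambda_1^*x_1\in K$, using that $\|z\|$ is supported in $\tilde B$ for every $\|\cdot\|\in\mathcal{P}$ and the countable concatenation property of $L^0(\mathcal{F})$.

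For the inductive step I would use the countable concatenation property to reduce to the affinely independent situation: there is a countable partition $\{A_k\}_{k\ge 1}$ of $\Omega$ into $\mathcal{F}$-sets such that, for each $k$, after passing to the base $(A_k,\mathcal{F}\cap A_k,P_{A_k})$, either the directions $\tilde I_{A_k}(x_i-x_d)$, $i=1,\dots,d-1$, are $L^0(\mathcal{F}\cap A_k)$-linearly independent --- in which case the coordinate functionals on their $L^0$-span are continuous and $L^0$-linear, so $\tilde I_{A_k}K$ is carried by a topological isomorphism onto a closed bounded subset of $(L^0(\mathcal{F}\cap A_k))^{d-1}$ and the tuples $\tilde I_{A_k}\lambda^\gamma$ are forced to be Cauchy --- or some nontrivial $L^0(\mathcal{F}\cap A_k)$-linear relation holds among them, in which case on $A_k$ one vertex can be eliminated and $\tilde I_{A_k}K$ becomes the $L^0$-convex hull of at most $d-1$ points, so the inductive hypothesis applies. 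On each $A_k$ one obtains $\tilde I_{A_k}y_\gamma\to w_k\in\tilde I_{A_k}K$, and reassembling $w:=\sum_k\tilde I_{A_k}w_k$ via countable concatenation (using that $K$ is closed under countable concatenation) yields $w\in K$ with $y_\gamma\to w$.

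The technical heart --- and the step I expect to be the main obstacle --- is precisely the construction of this partition together with the requisite ``$L^0$-linear algebra over $A_k$'': that one can $\sigma$-locally select a maximal independent subfamily among the $\tilde I_{A_k}(x_i-x_d)$, that the relevant $L^0$-valued Gram determinant is invertible on the corresponding piece, and that the resulting coordinate map is continuous (an open-mapping type statement for finitely generated free $L^0$-submodules). This is exactly the kind of ``rank of $L^0$-modules'' bookkeeping that the authors avoid in the main theorems, but it seems unavoidable, in some form, for this auxiliary lemma --- and it is of course vacuous when $(\Omega,\mathcal{F},P)$ is trivial, i.e.\ in the classical case, where $K$ is just a compact polytope.
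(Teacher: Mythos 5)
Your reduction to the coefficient simplex $\Lambda_d$ and the map $\Phi$ is the right starting point, and your $d=2$ case is essentially sound, but the proposal stops short of a proof exactly where you say it does: the inductive step is not carried out, and as sketched it would not go through. On a piece $A_k$ where the directions $\tilde I_{A_k}(x_i-x_d)$ satisfy a nontrivial $L^0$-linear relation, $\tilde I_{A_k}K$ does \emph{not} become the $L^0$-convex hull of $d-1$ of the original points: even classically, an affine dependence only yields (via Radon/Carath\'eodory) that the hull is a finite \emph{union} of hulls of proper subsets, and in the $L^0$ setting the choice of which vertex to drop varies measurably over $A_k$, so you are pushed into precisely the $\sigma$-local measurable-selection and rank bookkeeping that you yourself identify as ``the main obstacle'' without resolving it. The ``independent'' branch likewise rests on an unproved open-mapping/coordinate-continuity statement for free finitely generated submodules (this is \cite[Lemma 3.4]{GP}, which the paper cites).

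The idea that makes the paper's proof short, and which is missing from your attempt, is a Koml\'os-type argument. The paper first uses \cite[Proposition 2.3]{WGL} together with \cite[Lemma 3.4]{GP} to identify the submodule generated by $x_1,\dots,x_d$ with a submodule of $L^0(\mathcal F,\mathbb R^d)$ (so that sequences and almost sure convergence become available), and then, given $y_n=\sum_i\lambda_n^ix_i\to y$ a.s., it does \emph{not} try to make the coefficient tuples $\lambda_n$ converge. Instead, since $\{\lambda_n\}$ is a.s. bounded in $L^0(\mathcal F,\mathbb R^d)$, a variant of Koml\'os' principle of subsequences \cite[Lemma 1.61]{FS} produces forward convex combinations $\mu_n\in conv\{\lambda_n,\lambda_{n+1},\dots\}$ converging a.s. to some $\mu\in\Lambda_d$; the corresponding forward convex combinations $z_n=\sum_i\mu_n^ix_i$ of the $y_n$ still converge a.s. to $y$, whence $y=\sum_i\mu^ix_i\in K$. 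This sidesteps the non-injectivity of $\Phi$ and the induction on $d$ entirely. Your route could probably be completed along the lines of the localization arguments in \cite{Wu}, but as written it contains a genuine unfilled gap at its technical core.
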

\begin{proof}
 Consider the submodule $M$ of $E$ generated by $\{x_1,x_2,\dots,x_d\}$, namely,
 $$M=span_{L^0}\{x_1,x_2, \dots, x_d\}:=\{\lambda_1 x_1+\lambda_2x_2+\cdots+\lambda_dx_d: \lambda_i\in L^0_+({\mathcal F}), i=1,2,\dots,d\},$$
 inherited the topology from $E$.

 By \cite[Proposition 2.3]{WGL}, there exists a finite partition $\{A_0,A_1,\dots,A_d\}$ of $\Omega$ to ${\cal F}$ such that ${\tilde I}_{A_i}M$ is a free module of rank $i$ over the algebra ${\tilde I}_{A_i}L^0(\mathcal F)$ for each $i\in\{0,1,\dots,d\}$ satisfying $P(A_i)>0$, in which case $M=\bigoplus^d_{i=0}{\tilde I}_{A_i}M$. We can assume, without loss of generality, that $P(A_i)>0$ for all $i\in \{1,2\dots,d\}$. According to (1) of \cite[Lemma 3.4]{GP}, for each $i\in \{1,2\dots,d\}$, ${\tilde I}_{A_i}M$ and ${\tilde I}_{A_i}L^0(\mathcal F, \mathbb R^i)$ are isomorphic in the sense of topological modules, and hence also a complete subset of $E$.

 For each $i\in \{1,2\dots,d\}$, suppose that $T_i$ is the topological module isomorphism from ${\tilde I}_{A_i}M$ to ${\tilde I}_{A_i}L^0(\mathcal F, \mathbb R^i)$. By regarding $L^0(\mathcal F, \mathbb R^i)$ as the submodule $L^0(\mathcal F, \mathbb R^i)\times \{0\}^{d-i}$ of $L^0(\mathcal F, \mathbb R^d)$ for each $i\in \{1,\dots,d-1\}$, the mapping $T: M\to L^0(\mathcal F, \mathbb R^d)$ given by $$Tx=T(\sum^n_{i=0}{\tilde I}_{A_i}x)=\sum^d_{i=1}{\tilde I}_{A_i}T_i({\tilde I}_{A_i}x),\forall x\in M$$
 is a topological module homomorphism such that $M$ and $TM$ is topological module isomorphic. From now on, we identify $M$ with $TM$ by regarding $x_1,x_2,\dots,x_d$ as elements of $L^0(\mathcal F, \mathbb R^d)$ for the simplicity of notation! It suffices to show that $Conv_{L^0}\{x_1,x_2, \dots, x_d\}$ is a complete subset of $L^0(\mathcal F, \mathbb R^d)$.

 For any Cauchy sequence $\{y_n, n\in\mathbb N\}$ in $Conv_{L^0}\{x_1,x_2, \dots, x_d\}$, where each $y_n$ can be expressed as $y_n=\lambda^1_nx_1+\cdots+\lambda^d_nx_d$ for some $\lambda^i_n\in L^0_+({\mathcal F}), i=1,2,\dots,d, \sum^d_{i=1}\lambda^i_n=1$. Due to the completeness of $L^0(\mathcal F, \mathbb R^d)$, $y_n$ converges in probability to some $y\in L^0(\mathcal F, \mathbb R^d)$. It remains to show $y\in Conv_{L^0}\{x_1,x_2, \dots, x_d\}$. By passing to a suitable subsequence, we may assume without loss of generality that $y_n$  converges $P$-a.s. to $y$.
 Since $\{\lambda_n=(\lambda^1_n,\dots,\lambda^d_n), n\in \mathbb N\}$ is a.s.bounded in $L^0(\mathcal F, \mathbb R^d)$, by a variant of Komlos' principle of subsequences \cite[Lemma 1.61]{FS}, there exists a sequence of convex combinations $$\mu_n\in conv\{\lambda_{n},\lambda_{n+1},\dots\}$$ which converges $P$-a.s. to some $\mu\in L^0(\mathcal F, \mathbb R^d)$. For each $n$, assume that $\mu^i_n=t^n_0\lambda^i_n+\cdots+t^n_{l(n)}\lambda^i_{n+l(n)}, i=1,2\dots,d$, where $l(n)$ is some positive integer and $t^n_0,\cdots,t^n_{l(n)}$ are some non-negative numbers such that $\sum\limits_{k=0}^{l(n)}t^n_k=1$. Clearly $\mu^i_n\in L^0_+({\mathcal F}), i=1,\dots,d$ and $\mu^1_n+\cdots+\mu^d_n=1$, combining with the fact that $\mu_n=(\mu^1_n,\dots,\mu^d_n)$ converges $P$-a.s to $\mu=(\mu^1,\dots,\mu^d)$, we deduce that $\mu^i\in L^0_+({\mathcal F}), i=1,\dots,d$ and $\mu^1+\cdots+\mu^d=1$. Since $y_n$ converges $P$-a.s. to $y$, the forward convex combination sequence $z_n:=\sum\limits_{k=0}^{l(n)}t^n_ky_{n+k}$ also converges $P$-a.s. to $y$. By simple calculation, we see that $z_n=\mu^1_nx_1+\cdots+\mu^d_nx_d$. Then the fact that $\mu_n$ converges $P$-a.s. to $\mu$ implies that $z_n$ converges $P$-a.s. to $\mu^1 x_1+\cdots+\mu^dx_d$. We conclude that $y=\mu^1 x_1+\cdots+\mu^dx_d$ belongs to $Conv_{L^0}\{x_1,x_2, \dots, x_d\}$.
\end{proof}

\begin{remark}
We point out that the proof of Lemma \ref{Wsclosed} is similar to that of \cite[Theorem 3.5]{Wu}, which states that for any finite subset $\{x_1,x_2, \dots, x_d\}$ of an RLC module $E$, the finitely generated $L^0$-convex cone $$cone_{L^0}\{x_1,\dots,x_d\}:=\{\lambda_1 x_1+\cdots+\lambda_d x_d: \lambda_i\in L^0_+({\mathcal F}),i=1,\dots,d\}$$
is a complete and thus closed subset of $E$.
\end{remark}

We now state and prove Proposition \ref{subdiff}. We remind the reader that for any $\xi$ and $\eta$ in $L^0({\mathcal F})$, $\xi>\eta$ means $\xi\geq \eta$ and $\xi\neq \eta$.

\begin{proposition}\label{subdiff}
Let $(E,\|\cdot\|)$ be an $RN$ module with base $(\Omega,{\mathcal F},P)$, and $E^*$ its random conjugate space. Suppose that $h_{u,\alpha}, h_{u_1,\alpha_1}$ and $h_{u_2,\alpha_2}$ are $L^0$-affine functions on $E$ such that $h_{u,\alpha}\leq h_{u_1,\alpha_1}\vee h_{u_2,\alpha_2}$, then $\langle u,x\rangle\leq (\langle u_1,x\rangle \vee \langle u_2,x\rangle), \forall x\in E$, and there exists $\mu\in L^0({\mathcal F})$ with $0\leq \mu\leq 1$ such that $u=\mu u_1+(1-\mu) u_2$.
\end{proposition}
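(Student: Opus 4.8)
The plan is to separate the statement into two parts, first establishing the pointwise inequality $\langle u,x\rangle \le \langle u_1,x\rangle \vee \langle u_2,x\rangle$ for all $x \in E$, and then upgrading this to the convex-combination representation $u = \mu u_1 + (1-\mu)u_2$. For the first part, I would argue exactly as in Proposition \ref{comparison}: from $h_{u,\alpha}\le h_{u_1,\alpha_1}\vee h_{u_2,\alpha_2}$ we get $\langle u,x\rangle + \alpha \le (\langle u_1,x\rangle+\alpha_1)\vee(\langle u_2,x\rangle+\alpha_2)$ for all $x$; replacing $x$ by $nx$, dividing by $n$, and letting $n\to\infty$ kills the constants $\alpha,\alpha_1,\alpha_2$ (note $(\langle u_1,x\rangle + \alpha_1/n)\vee(\langle u_2,x\rangle+\alpha_2/n)$ converges in $L^0$ to $\langle u_1,x\rangle\vee\langle u_2,x\rangle$ since $\vee$ is $L^0$-continuous and the difference is controlled by $(|\alpha_1|\vee|\alpha_2|)/n$). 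This yields the homogeneous inequality $\langle u,x\rangle \le \langle u_1,x\rangle \vee \langle u_2,x\rangle$ for all $x\in E$.

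For the second part, I would reformulate the goal as a separation problem. Consider the $L^0$-convex set $C := Conv_{L^0}\{u_1,u_2\} = \{\mu u_1 + (1-\mu)u_2 : \mu\in L^0(\mathcal F),\ 0\le\mu\le 1\}$ inside $E^*$; I want to show $u\in C$. Suppose not. By Lemma \ref{Wsclosed}, $C$ is a (complete, hence) random $w^*$-closed $L^0$-convex subset of $E^*$ — here one must check $Conv_{L^0}$ of a two-point set is closed in the random $w^*$-topology, which follows from Lemma \ref{Wsclosed} applied with the $w^*$-topology (the lemma is stated for a general $RLC$ module, and $(E^*,\sigma(E^*,E))$ is such a module). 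Then a Hahn–Banach type separation theorem for random $w^*$-closed $L^0$-convex sets in $E^*$ (available in random convex analysis, via the random conjugate space of $(E^*,\sigma(E^*,E))$, which is $E$ up to the natural embedding on the relevant part) produces some $x\in E$ and a set $A\in\mathcal F_+$ such that $\langle u,x\rangle > \langle w,x\rangle$ on $A$ for every $w\in C$; in particular $\langle u,x\rangle > \langle u_1,x\rangle$ on $A$ and $\langle u,x\rangle > \langle u_2,x\rangle$ on $A$, hence $\langle u,x\rangle > \langle u_1,x\rangle \vee \langle u_2,x\rangle$ on $A$, contradicting the homogeneous inequality just proved. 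Therefore $u\in C$, i.e. $u = \mu u_1 + (1-\mu)u_2$ for some $\mu\in L^0(\mathcal F)$ with $0\le\mu\le 1$.

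The main obstacle I anticipate is the separation step: one needs the precise form of the random Hahn–Banach / strict separation theorem in $E^*$ with the random $w^*$-topology, making sure the separating functional lies in $E$ (not merely in $E^{**}$), and handling the subtlety that strict separation in the $L^0$-setting is only "strict on a set $A$ of positive measure" rather than globally — which is exactly why the conclusion $0\le\mu\le 1$ lives in $L^0$ and why the stability framework of the paper is needed. A secondary technical point is confirming that Lemma \ref{Wsclosed} genuinely applies to $(E^*,\sigma(E^*,E))$: the lemma's proof uses completeness of $L^0(\mathcal F,\mathbb R^d)$ and the structure theory of finitely generated $L^0$-modules, none of which cares whether the ambient topology is the norm topology or the $w^*$-topology, so the finitely generated $L^0$-convex hull $Conv_{L^0}\{u_1,u_2\}$ is complete and its completeness forces $w^*$-closedness. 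If a direct separation argument proves awkward, an alternative is to work with the finite partition of $\Omega$ on which $\{u_1,u_2\}$ spans a free module of rank $0$, $1$, or $2$ and argue rank-by-rank, but I expect the separation route to be cleaner and more in the spirit of the paper.
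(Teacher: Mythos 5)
Your proposal follows the paper's proof essentially step for step: the same scaling argument (replace $x$ by $nx$ and let $n\to\infty$) for the homogeneous inequality, and the same closedness-plus-separation argument for $Conv_{L^0}\{u_1,u_2\}$ in $(E^*,\sigma(E^*,E))$ via Lemma \ref{Wsclosed}. The subtlety you flag about where the separating functional lives is real --- the random conjugate space of $(E^*,\sigma(E^*,E))$ under the $(\varepsilon,\lambda)$-topology is the countable concatenation hull $H_{cc}(E)$ rather than $E$ itself --- and the paper resolves it exactly along the lines you anticipate, by writing the functional as $\sum_{n}\tilde I_{A_n}\langle\cdot,x_n\rangle$ with $x_n\in E$ and extracting a single $x_{n_0}$ together with a positive-measure set on which the strict inequality persists, contradicting the homogeneous inequality.
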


\begin{proof}
By definition, $h_{u,\alpha}\leq h_{u_1,\alpha_1}\vee h_{u_2,\alpha_2}$ means that $\langle u,x\rangle+\alpha\leq (\langle u_1,x\rangle +\alpha_1) \vee (\langle u_2,x\rangle+\alpha_2), \forall x\in E$.

For any $n\in \mathbb N$, replacing $x$ with $nx$, we can deducing that $\langle u,x\rangle+\frac{\alpha}{n}\leq (\langle u_1,x\rangle +\frac{\alpha_1}{n}) \vee (\langle u_2,x\rangle+\frac{\alpha_2}{n}), \forall x\in E$. Letting $n$ tend to $\infty$, we obtain that
\begin{equation}\label{P10:1}
\langle u,x\rangle\leq (\langle u_1,x\rangle \vee \langle u_2,x\rangle), \forall x\in E
\end{equation}

Consider the $RLC$ module $(E^*, \sigma(E^*,E))$, it follows from Lemma \ref{Wsclosed} that $conv_{L^0}\{u_1, u_2\}$ is an $L^0$-convex and closed subset of $(E^*, \sigma(E^*,E))$. We prove $u\in conv_{L^0}\{u_1, u_2\}$ by contradiction. If $u\notin conv_{L^0}\{u_1, u_2\}$, then by the separation theorem in $RLC$ modules \cite[Theorem 3.1]{GXC} (see also \cite[Theorem 3.6]{Guo-JFA}), there exists $g\in (E^*, \sigma(E^*,E))^*$ such that
\begin{equation}\label{P10:2}
\langle u,g\rangle> \vee\{\langle w,g\rangle: w\in conv_{L^0}\{u_1, u_2\}\}.
\end{equation}
\eqref{P10:2} also means there exists some $B\in \mathcal{F}$ with $P(B)>0$ such that
 \begin{equation}\label{P10:3}
\langle u,g\rangle> \vee\{\langle w,g\rangle: w\in conv_{L^0}\{u_1, u_2\}\} \text{~on~$B$}.
\end{equation}

Since $(E^*, \sigma(E^*,E))$ is endowed with the $(\varepsilon,\lambda)$-topology in this paper, it is well known that the random conjugate space of $(E^*, \sigma(E^*,E))$ under the locally $L^0$-convex topology is $E$ (see \cite{Guo-JFA,GZZ-RCA1} for the notion of the locally $L^0$-convex topology and the related random conjugate space), then by \cite[Proposition 2.6]{GZZ-RCA1}, the random conjugate space under the $(\varepsilon,\lambda)$-topology of $(E^*, \sigma(E^*,E))$ is $H_{cc}(E)$, where $H_{cc}(E)$ stands for the countable concatenation hull of $E$ in the random conjugate space under the $(\varepsilon,\lambda)$-topology of $(E^*, \sigma(E^*,E))$. Thus there exist a sequence $\{x_n, n\in \mathbb{N}\}$ in $E$ and a countable partition $\{A_n, n\in \mathbb{N}\}$ of $\Omega$ to $\mathcal{F}$ such that $\langle u,g\rangle =\sum_{n=1}^{\infty} \tilde{I}_{A_n} \langle u, x_n\rangle$, which must imply by (3) that there exists some $n_0\in \mathbb{N}$ such that $P(A_{n_0}\cap B)> 0$ and $\langle u, x_{n_0}\rangle > \vee \{\langle w, x_{n_0}\rangle: w\in Conv_{L^0} \{u_1,u_2\}\}$ on $A_{n_0}\cap B$. Specially $\langle u,x_{n_0}\rangle> (\langle u_1,x_{n_0}\rangle)\vee (\langle u_2, x_{n_0}\rangle)$ on $A_{n_0} \cap B$, which is a contradiction to \eqref{P10:1}. We conclude that $u\in conv_{L^0}\{u_1, u_2\}$, namely there exists $\mu\in L^0({\mathcal F})$ with $0\leq \mu\leq 1$ such that $u=\mu u_1+(1-\mu) u_2$.
\end{proof}

\section{The order preserving case}\label{OPCase}

In this section, we prove Theorem \ref{OP} and Corollary \ref{involution}. For this purpose, we need some preparations. We will gradually establish a series of characteristic properties for a stable and fully order preserving operator.

\begin{proposition}\label{inverse}
  If $T: \mathscr{C}(E) \rightarrow \mathscr{C}(E)$ is a stable and fully order preserving operator, then so is its inverse $T^{-1}$.
\end{proposition}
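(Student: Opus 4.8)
\textbf{Proof proposal for Proposition \ref{inverse}.}

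The plan is to verify, one at a time, the three required properties of $T^{-1}$: that it is a well-defined operator on $\mathscr{C}(E)$ (i.e.\ $T$ is a bijection so $T^{-1}$ exists and is onto), that it is fully order preserving, and that it is stable. First, since $T$ is fully order preserving it is by definition onto, and it is also injective: if $T(f)=T(g)$ then $T(f)\leq T(g)$ and $T(g)\leq T(f)$, so the ``only if'' direction of full order preservation gives $f\leq g$ and $g\leq f$, hence $f=g$. Thus $T$ is a bijection and $T^{-1}:\mathscr{C}(E)\to\mathscr{C}(E)$ is a well-defined operator which is automatically onto (its inverse is $T$).

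Next I would check that $T^{-1}$ is fully order preserving. Let $f,g\in\mathscr{C}(E)$ and write $f=T(f_1)$, $g=T(g_1)$ with $f_1=T^{-1}(f)$, $g_1=T^{-1}(g)$. Applying the equivalence ``$f_1\leq g_1 \iff T(f_1)\leq T(g_1)$'' that defines full order preservation of $T$, we get $T^{-1}(f)\leq T^{-1}(g) \iff f_1\leq g_1 \iff f\leq g$, which is exactly the statement that $T^{-1}$ is fully order preserving; onto-ness was already noted.

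Finally, for stability: fix $A\in\mathcal{F}$ and $f,g\in\mathscr{C}(E)$, and set $f_1=T^{-1}(f)$, $g_1=T^{-1}(g)$, so $f=T(f_1)$, $g=T(g_1)$. By stability of $T$ applied to $f_1$ and $g_1$,
\[
T(\tilde I_A f_1+\tilde I_{A^c} g_1)=\tilde I_A T(f_1)+\tilde I_{A^c} T(g_1)=\tilde I_A f+\tilde I_{A^c} g.
\]
Since $\mathscr{C}(E)$ is stable, $\tilde I_A f_1+\tilde I_{A^c} g_1\in\mathscr{C}(E)$, so applying $T^{-1}$ to both sides yields $T^{-1}(\tilde I_A f+\tilde I_{A^c} g)=\tilde I_A f_1+\tilde I_{A^c} g_1=\tilde I_A T^{-1}(f)+\tilde I_{A^c} T^{-1}(g)$, which is stability of $T^{-1}$.

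This proof is essentially a formal unwinding of the definitions, so I do not anticipate a genuine obstacle; the only point requiring a little care is making sure that the arguments of $T$ and $T^{-1}$ in the stability step indeed lie in $\mathscr{C}(E)$, which is guaranteed by the stability of the class $\mathscr{C}(E)$ recorded just before Definition \ref{definition1.5}.
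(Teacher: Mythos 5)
Your proof is correct and fills in exactly the routine verification the paper dismisses with ``It is obvious'': bijectivity from full order preservation, the symmetric unwinding of the order equivalence for $T^{-1}$, and the stability transfer via applying $T^{-1}$ to the stability identity for $T$. No gaps; this matches the paper's (implicit) argument.
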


\begin{proof}
It is obvious.
\end{proof}

\begin{proposition}\label{exchange}
  If $T: \mathscr{C}(E) \rightarrow \mathscr{C}(E)$ is stable and fully order preserving, then for any family $\{f_i\}_{i\in I}\subset {\mathscr C}(E)$ such that $\vee_{i\in I} f_i$ belongs to ${\mathscr C}(E)$, we have that $T(\vee_{i\in I} f_i) =\vee_{i\in I} T(f_i)$.
\end{proposition}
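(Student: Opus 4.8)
The plan is to establish the identity $T(\vee_{i\in I} f_i) = \vee_{i\in I} T(f_i)$ by the usual two-sided argument adapted to the ordered structure of $\bar L^0(\mathcal F)$, exploiting that a fully order preserving operator characterizes the order in both directions. First I would record the easy inclusion. For each $j\in I$ we have $f_j \leq \vee_{i\in I} f_i$, and since $T$ is order preserving this gives $T(f_j)\leq T(\vee_{i\in I} f_i)$; taking the supremum over $j$ yields $\vee_{i\in I} T(f_i)\leq T(\vee_{i\in I} f_i)$. In particular the pointwise supremum $\vee_{i\in I} T(f_i)$ is majorized by an element of $\mathscr C(E)$.

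For the reverse inequality, the key point is that $g := \vee_{i\in I} T(f_i)$, being a pointwise supremum of members of $\mathscr C(E)$ that is bounded above by $T(\vee_{i\in I}f_i)\in\mathscr C(E)$, is itself a proper lower semicontinuous $L^0$-convex function, hence lies in $\mathscr C(E)$ (a pointwise supremum of $L^0$-convex functions is $L^0$-convex, lower semicontinuity passes to arbitrary suprema as the epigraph is an intersection of closed sets, and properness follows from domination by $T(\vee_i f_i)$ together with Proposition~\ref{proposition1.1} reducing the sup to a countable one so that it is $>-\infty$ on $\Omega$). Now apply $T^{-1}$, which by Proposition~\ref{inverse} is again stable and fully order preserving, hence in particular order preserving. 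From $T(f_i)\leq g$ for every $i$ we get $f_i = T^{-1}(T(f_i)) \leq T^{-1}(g)$ for every $i$, so $\vee_{i\in I} f_i \leq T^{-1}(g)$. Applying $T$ (order preserving) gives $T(\vee_{i\in I} f_i)\leq T(T^{-1}(g)) = g = \vee_{i\in I} T(f_i)$, which is exactly the missing inequality. Combining the two yields equality.

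I expect the only genuine obstacle to be the verification that $g=\vee_{i\in I}T(f_i)$ belongs to $\mathscr C(E)$, since without this one cannot legitimately feed $g$ into $T^{-1}$. The $L^0$-convexity and closedness of $epi(g)=\bigcap_{i\in I} epi(T(f_i))$ are routine, so the real content is properness: one must rule out $g(x)=+\infty$ on a set of positive measure for every $x$, i.e.\ show $dom(g)\neq\emptyset$, and this is where the domination $g\leq T(\vee_i f_i)$ is used, any $x\in dom(T(\vee_i f_i))$ working; and one must ensure $g(x)>-\infty$ on $\Omega$, which is immediate since each $T(f_i)(x)>-\infty$ on $\Omega$. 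The "boundedness above by a member of $\mathscr C(E)$" fact from the first paragraph is precisely what makes this argument self-contained. Everything else is a formal consequence of $T$ and $T^{-1}$ being order preserving.
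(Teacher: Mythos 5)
Your proof is correct and is essentially the paper's own argument: the paper omits the proof, citing that it is a word-for-word copy of \cite[Proposition 1(ii)]{IRS}, which proceeds exactly as you do (easy inequality from order preservation, membership of $\vee_i T(f_i)$ in $\mathscr{C}(E)$ via domination and closedness of the intersected epigraphs, then the reverse inequality via surjectivity/the inverse operator). You also implicitly confirm the paper's remark that stability is not needed here, since your argument uses only the fully order preserving property of $T$ and $T^{-1}$.
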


The proof is omitted, since it is a word-by-word copy of the proof of \cite [Proposition 1 (ii)]{IRS}. Note that in the proof, the assumption that $T$ is stable is not used.

We then show that each stable and fully order preserving operator maps any $L^0({\mathcal F})$-valued function to an $L^0({\mathcal F})$-valued function. Precisely, we have the following:

\begin{proposition}\label{valuefinite}
 If $T: \mathscr{C}(E) \rightarrow \mathscr{C}(E)$ is stable and fully order preserving and $f\in {\mathscr C}(E)$ with $f(x)\in L^0({\mathcal F})$ for every $x\in E$, then $[T(f)](x)\in L^0({\mathcal F})$ for every $x\in E$.
\end{proposition}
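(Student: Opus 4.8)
The plan is to show that $T(f)$ is everywhere finite by contradiction, using the fact that $f$ being everywhere $L^0(\mathcal F)$-valued forces it to sit (pointwise) between two $L^0$-affine functions, and that $T$ preserves order and maps $\mathscr A(E)$-type bounds to finite-valued functions. Concretely, suppose $[T(f)](x_0) = +\infty$ on some $A \in \mathcal F_+$ for some $x_0 \in E$. First I would use Corollary \ref{Affsup}: since $f \in \mathscr C(E)$, there is some $h_{u,\alpha} \in \mathscr A(f)$, so $h_{u,\alpha} \le f$, and $T$ being order preserving gives $T(h_{u,\alpha}) \le T(f)$. This by itself does not contradict $[T(f)](x_0)=+\infty$, so the key is to trap $f$ from above as well, at least locally near the value at a single point.

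The crucial observation is that the hypothesis ``$f(x) \in L^0(\mathcal F)$ for every $x$'' lets us dominate $f$ by a finite-valued member of $\mathscr C(E)$ that $T$ can be understood on. For a fixed $x_0$, consider $g := f \vee h$ where $h$ is chosen so that $f \le g$ and $g$ is, say, an indicator-type or $L^0$-affine perturbation; more usefully, I would instead argue as follows. Pick any $h_{u,\alpha} \in \mathscr A(f)$ (nonempty by Corollary \ref{Affsup}). Then $h_{u,\alpha} \le f$, so $h_{u,\alpha} + \delta_{x_0}^{?}$ — no; the cleaner route is: since $f$ is finite at $x_0$, the function $\tilde f := h_{u,\alpha} \vee (\xi + \delta_{x_0})$ with $\xi = f(x_0) \in L^0(\mathcal F)$ is a member of $\mathscr C(E)$ (it is the max of two members of $\mathscr C(E)$, using Lemma \ref{Indicator}) satisfying $\tilde f \le f$ and $\tilde f(x_0) = f(x_0) \in L^0(\mathcal F)$. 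Applying $T$ and Proposition \ref{exchange}, $T(\tilde f) = T(h_{u,\alpha}) \vee T(\xi + \delta_{x_0}) \le T(f)$. Now I need that $T(h_{u,\alpha})$ and $T(\xi+\delta_{x_0})$ are both finite at $x_0$; granting that, $T(\tilde f)(x_0) \in L^0(\mathcal F)$, which does not yet contradict $T(f)(x_0) = +\infty$ on $A$ — so this direction alone is insufficient, and one genuinely needs an upper bound for $f$, not just a lower one.

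So the real argument must go through the inverse. By Proposition \ref{inverse}, $T^{-1}$ is also stable and fully order preserving. Set $g := T(f)$ and suppose $g(x_0) = +\infty$ on some $A \in \mathcal F_+$. Using stability, restrict attention to $A$: replacing $f$ by $\tilde I_A f + \tilde I_{A^c} f_0$ for a convenient fixed $f_0$ and using $T(\tilde I_A f + \tilde I_{A^c} f_0) = \tilde I_A T(f) + \tilde I_{A^c} T(f_0)$, we may assume $g = +\infty$ at $x_0$ on all of $\Omega$, i.e. $x_0 \notin dom(g)$. Now choose $h_{v,\beta} \in \mathscr A(g)$ (Corollary \ref{Affsup}); then for every $n \in \mathbb N$ the function $g \wedge (h_{v,\beta} + n)$ does not help since $\wedge$ need not land in $\mathscr C(E)$. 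Instead, pick a sequence $g_n \in \mathscr C(E)$ with $g_n \uparrow g$ and each $g_n(x_0) \in L^0(\mathcal F)$ — e.g. $g_n = \vee\{ h_{v_k,\beta_k} : k \le n\}$ for a countable family of affine minorants whose supremum is $g$ (available by Corollary \ref{Affsup} together with Proposition \ref{proposition1.1}(1), which produces a countable sup). Then $g_n < g$ strictly on $A$ (at $x_0$), hence $f_n := T^{-1}(g_n) < f = T^{-1}(g)$ strictly since $T^{-1}$ is fully order preserving, and by Proposition \ref{exchange} applied to $T^{-1}$, $\vee_n f_n = T^{-1}(\vee_n g_n) = T^{-1}(g) = f$. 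But each $f_n \le f$ with $f$ finite-valued, so $\{f_n(x)\}$ is, for each $x$, a nondecreasing (after passing to the directed family) sequence in $L^0(\mathcal F)$ bounded above by $f(x) \in L^0(\mathcal F)$; by Dedekind completeness (Proposition \ref{proposition1.1}(3)) its sup is in $L^0(\mathcal F)$. This shows $f = \vee_n f_n$ is finite-valued, which is consistent — so the contradiction must instead be extracted at the level of $g$: apply the finiteness just established to the pair $(T^{-1}, g_n)$ versus $(T, f_n)$. The honest statement is: since each $f_n \le f$ and $f$ is $L^0$-valued, and since $T$ is order preserving, $g_n = T(f_n) \le T(f) = g$; the point is to show $\sup_n g_n(x_0) = g(x_0)$ forces $g(x_0) \in L^0(\mathcal F)$, which requires knowing the $g_n(x_0)$ are bounded above in $L^0(\mathcal F)$ — and that is exactly what one gets by running the whole argument with $f$ replaced by a suitable finite-valued majorant, using the symmetry between $T$ and $T^{-1}$. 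The main obstacle, and the step I expect to be delicate, is precisely this: manufacturing an everywhere-finite $L^0$-valued upper bound for $f$ in $\mathscr C(E)$ whose image under $T$ is controlled — the natural candidates ($L^0$-affine functions, indicators) are too small, so one likely must use $f$ itself together with the already-established fact that $T^{-1}$ preserves $\mathscr C(E)$, and argue that $x_0 \notin dom(T(f))$ would contradict $x_0 \in dom(f)$ via a strict-inequality/sup-exchange argument, being careful that on a general $(\Omega,\mathcal F,P)$ ``strict inequality'' only holds on a set of positive measure, which is where stability (to localize to that set) is indispensable.
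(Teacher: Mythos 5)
There is a genuine gap: your proposal never closes, and you say so yourself at the end ("the main obstacle\dots is precisely this"). The idea you are missing is that one does not need an everywhere-finite \emph{upper} bound for $f$ at all. The paper's argument runs as follows. Suppose $[T(f)](x_0)=\infty$ on some $A\in\mathcal F_+$. Since $T$ is onto, pull back the indicator function at the \emph{bad point of the codomain side}: choose $f_0\in\mathscr C(E)$ with $T(f_0)=\delta_{x_0}$ (Lemma \ref{Indicator} guarantees $\delta_{x_0}\in\mathscr C(E)$). Because $f$ is everywhere $L^0(\mathcal F)$-valued, $f_1:=f\vee f_0$ is still proper (its effective domain is exactly $dom(f_0)\neq\emptyset$), hence $f_1\in\mathscr C(E)$; this is precisely where the hypothesis on $f$ enters. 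By Proposition \ref{exchange}, $T(f_1)=T(f)\vee\delta_{x_0}$. Any $\tilde x\in dom(T(f_1))$ must satisfy $\delta_{x_0}(\tilde x)\leq[T(f_1)](\tilde x)\in L^0(\mathcal F)$, forcing $\tilde x=x_0$; but $[T(f_1)](x_0)\geq[T(f)](x_0)=\infty$ on $A$, so $dom(T(f_1))=\emptyset$, contradicting $T(f_1)\in\mathscr C(E)$.

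Your attempts all revolve around sandwiching $f$ or approximating $T(f)$ from below by finite suprema of affine minorants, and you correctly diagnose that these only reproduce consistent information (lower bounds pass through $T$ in the harmless direction, and $\vee_n f_n=f$ via Proposition \ref{exchange} applied to $T^{-1}$ yields no contradiction). You even touch the indicator function, but only as a device on the domain side (as a minorant built from $\xi+\delta_{x_0}$ below $f$), which points the wrong way. The decisive move is to use surjectivity to transport $\delta_{x_0}$ \emph{backwards} and then exploit that $\vee$ with an everywhere-finite $f$ preserves properness while $\vee$ with $\delta_{x_0}$ on the image side destroys it at the point where $T(f)$ blows up. Without that step the proof is not complete.
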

\begin{proof}
We prove this proposition by contradiction. Suppose that there exist $x_0\in E$ and $A\in {\mathcal F}_+$ such that $[T(f)](x_0)=\infty$ on $A$. According to Lemma \ref{Indicator}, the indicator function $\delta_{x_0}$ is a member of ${\mathscr C}(E)$. Since $T$ is onto, there exists $f_0\in {\mathscr C}(E)$ such that $T(f_0)=\delta_{x_0}$. Due to the assumption on $f$ we see that $f_1=f\vee f_0$ belongs to ${\mathscr C}(E)$. Then using Proposition \ref{exchange} we get $T(f_1)=T(f)\vee T(f_0)=T(f)\vee \delta_{x_0}$. If $\tilde x\in dom(T(f_1))$, then it follows from $\delta_{x_0}(\tilde x)\leq [T(f_1)](\tilde x)\in L^0({\mathcal F})$ that $\tilde x=x_0$, subsequently, $[T(f_1)](\tilde x)\geq [T(f)](\tilde x)=[T(f)](x_0)=\infty$ on $A$, this implies that $dom(T(f_1))$ must be empty and thus $T(f_1)\notin {\mathscr C}(E)$, which is a contradiction.
\end{proof}

Since we have known from Corollary \ref{Affsup} that every $f\in {\mathscr C}(E)$ can be expressed as $f=\vee\{h: h\in {\mathscr A}(E),h\leq f\}$, then according to Proposition \ref{exchange}, every stable and fully order preserving operator $T:{\mathscr C}(E)\to {\mathscr C}(E)$ is fully determined by its action on ${\mathscr A}(E)$. Hence, we will analyze the behavior of the restriction of a stable and fully order preserving operator to ${\mathscr A}(E)$.

Next, we prove that each stable and fully order preserving operator maps any $L^0$-affine function to an $L^0$-affine function. Compared with the proof of \cite[Proposition 3]{IRS}, our proof must use the stable property of $T$.

\begin{proposition}
  If $T: \mathscr{C}(E) \rightarrow \mathscr{C}(E)$ is stable and fully order preserving, then we have: \\
(1). $T(h)\in {\mathscr A}(E)$ for all $h\in {\mathscr A}(E)$.\\
(2). If $T(f)\in {\mathscr A}(E)$ for some $f\in {\mathscr C}(E)$ then $f\in {\mathscr A}(E)$.
\end{proposition}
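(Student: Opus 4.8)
The plan is to find a single property $(\star)$ of a function $f\in\mathscr{C}(E)$ which simultaneously characterises membership in $\mathscr{A}(E)$ and is visibly preserved by every stable fully order preserving operator; both (1) and (2) then follow at once (and, if one prefers, (2) can equally be obtained from (1) applied to $T^{-1}$, which is again stable and fully order preserving by Proposition \ref{inverse}). Write $\mathscr{L}(f):=\{g\in\mathscr{C}(E):g\le f\}$. Property $(\star)$ is: for all $g_1,g_2\in\mathscr{L}(f)$ there exists $A\in\mathcal{F}$ with $\tilde{I}_A g_1+\tilde{I}_{A^c}g_2\le g_1$ and $\tilde{I}_A g_1+\tilde{I}_{A^c}g_2\le g_2$ (equivalently, $g_1(x)\le g_2(x)$ on $A$ and $g_2(x)\le g_1(x)$ on $A^c$ for every $x\in E$). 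When $\mathcal{F}=\{\Omega,\emptyset\}$, $(\star)$ is exactly ``$\mathscr{L}(f)$ is totally ordered'', the property used in the proof of \cite[Proposition 3]{IRS}; over a general base it is the appropriate stable relaxation, since $(L^0(\mathcal{F}),\le)$ is itself only ``totally ordered after gluing''.

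The first step is to show that $f\in\mathscr{A}(E)$ iff $\mathscr{L}(f)$ has property $(\star)$. For the forward implication, write $f=h_{u,\alpha}$; by the Corollary stated immediately after Proposition \ref{AffCha}, every $g\in\mathscr{L}(f)$ equals $h_{u,\beta}$ for some $\beta\le\alpha$, so for $g_i=h_{u,\beta_i}$ one takes $A=[\beta_1\le\beta_2]$, the common summand $\langle u,\cdot\rangle$ not affecting the comparison of the intercepts. For the converse I argue by contraposition. If $f\notin\mathscr{A}(E)$, then there is no $u\in E^*$ such that every affine minorant of $f$ has first component $u$; for if some $u$ worked, then (using $\mathscr{A}(f)\neq\emptyset$ from Corollary \ref{Affsup}) the element $\alpha:=\wedge\{f(x)-\langle u,x\rangle:x\in E\}$ would lie in $L^0(\mathcal{F})$ — it is bounded below by the intercept of any affine minorant and bounded above by $f(x_0)-\langle u,x_0\rangle$ for any $x_0\in dom(f)$ — whence $\mathscr{A}(f)=\{h_{u,\beta}:\beta\le\alpha\}$ and, by Corollary \ref{Affsup} again, $f=\vee\mathscr{A}(f)=h_{u,\alpha}\in\mathscr{A}(E)$, a contradiction. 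So $\mathscr{L}(f)$ contains two affine functions $h_{u_1,\gamma_1}$ and $h_{u_2,\gamma_2}$ with $u_1\neq u_2$, and such a pair cannot satisfy $(\star)$: from $\tilde{I}_A h_{u_1,\gamma_1}+\tilde{I}_{A^c}h_{u_2,\gamma_2}\le h_{u_i,\gamma_i}$ $(i=1,2)$ one reads $\langle u_1,x\rangle+\gamma_1\le\langle u_2,x\rangle+\gamma_2$ on $A$ and $\langle u_2,x\rangle+\gamma_2\le\langle u_1,x\rangle+\gamma_1$ on $A^c$ for all $x$; applying the homogenisation $x\mapsto nx$, $n\to\infty$ used in the proofs of Propositions \ref{comparison} and \ref{subdiff}, followed by $x\mapsto -x$, forces $\tilde{I}_A u_1=\tilde{I}_A u_2$ and $\tilde{I}_{A^c}u_1=\tilde{I}_{A^c}u_2$, i.e.\ $u_1=u_2$, which is absurd.

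The second step is to check that $(\star)$ transfers under $T$. Since $T$ is onto and $T,T^{-1}$ are order preserving, $T$ maps $\mathscr{L}(h)$ bijectively onto $\mathscr{L}(T(h))$. Given $g_1',g_2'\in\mathscr{L}(T(h))$, write $g_i'=T(g_i)$ with $g_i\in\mathscr{L}(h)$ and choose $A\in\mathcal{F}$ witnessing $(\star)$ for $g_1,g_2$. Then $\tilde{I}_A g_1+\tilde{I}_{A^c}g_2\in\mathscr{C}(E)$ (stability of $\mathscr{C}(E)$), so by the stability of $T$ we get $\tilde{I}_A g_1'+\tilde{I}_{A^c}g_2'=T(\tilde{I}_A g_1+\tilde{I}_{A^c}g_2)$, and this is $\le T(g_1)=g_1'$ and $\le T(g_2)=g_2'$ because $T$ is order preserving. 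Hence $A$ witnesses $(\star)$ for $g_1',g_2'$, so $\mathscr{L}(h)$ has $(\star)$ iff $\mathscr{L}(T(h))$ does; together with the first step this yields $h\in\mathscr{A}(E)\iff T(h)\in\mathscr{A}(E)$ for every $h\in\mathscr{C}(E)$, which is precisely the conjunction of (1) and (2).

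I expect the forward implication in the first step and the whole of the second step to be straightforward. The delicate point — and the place where the argument genuinely departs from \cite{IRS} — is the converse in the first step: extracting, from the mere failure of $f$ to be affine, two affine minorants whose first components differ on a set of positive probability (this needs Proposition \ref{AffCha}, Corollary \ref{Affsup}, the Dedekind completeness of $L^0(\mathcal{F})$ and the directedness statements in Proposition \ref{proposition1.1}), and then excluding every $\mathcal{F}$-measurable gluing of two affine functions of distinct first component into a comparable pair. It is here that stability of $T$ becomes indispensable: over a non-trivial $(\Omega,\mathcal{F},P)$, ordinary total order of $\mathscr{L}(f)$ fails even when $f$ is affine (because $(L^0(\mathcal{F}),\le)$ is not totally ordered), so one must pass to the stable property $(\star)$, whose defining operations $\tilde{I}_A(\cdot)+\tilde{I}_{A^c}(\cdot)$ are respected only by a stable operator.
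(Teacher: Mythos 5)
Your proposal is correct, and it is organised differently from the paper's proof even though the two arguments rest on the same three ingredients: the Corollary following Proposition \ref{AffCha}, a gluing of two minorants along a measurable set that is legitimised by stability, and the homogenisation $x\mapsto nx$, $n\to\infty$ from Proposition \ref{comparison}. The paper works directly with the affine minorants of $T(h)$: it pulls two of them back through $T^{-1}$ to minorants $h_{u,\eta}$, $h_{u,\eta'}$ of $h$ sharing the linear part $u$, glues these \emph{upward} into a common affine majorant $h_{u,\eta\vee\eta'}$, pushes that forward by stability, and identifies the linear parts of the original pair via Proposition \ref{comparison} before concluding with Proposition \ref{AffCha}; part (2) then follows by applying part (1) to $T^{-1}$. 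You instead isolate a single invariant $(\star)$ of the full lower set $\{g\in\mathscr{C}(E):g\le f\}$ --- the stable comparability of any two minorants, which is the correct $L^0$-relaxation of the total-order criterion of \cite{IRS} --- prove that it characterises membership in $\mathscr{A}(E)$, and note that it is transported by any stable order isomorphism of $\mathscr{C}(E)$; here the glued object $\tilde I_Ag_1+\tilde I_{A^c}g_2$ is a common \emph{minorant} rather than a common majorant. Your route delivers (1) and (2) simultaneously and makes the role of stability conceptually transparent (it is precisely what lets the IRS characterisation survive over a non-trivial base, where $(L^0(\mathcal F),\le)$ is not totally ordered); the price is the converse of your Step 1, namely extracting from $f\notin\mathscr{A}(E)$ two affine minorants with distinct linear parts and excluding every measurable gluing of them --- a step the paper never needs because it only ever manipulates functions already known to be affine. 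That step is carried out correctly in your proposal: the infimum $\wedge\{f(x)-\langle u,x\rangle:x\in E\}$ is squeezed between the intercept of an existing minorant (Corollary \ref{Affsup}) and $f(x_0)-\langle u,x_0\rangle$ for $x_0\in dom(f)$, so it lies in $L^0(\mathcal F)$ and forces $f=h_{u,\alpha}$ via Proposition \ref{AffCha}. I see no gap.
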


\begin{proof}
(1) Let $h=h_{u,\alpha}$. We will use Proposition \ref{AffCha} in order to establish $L^0$-affineness of $T(h)$. Take $h_{v,\delta}$ and $h_{v^\prime,\delta^\prime}$ in ${\mathscr A}(T(h))$, i.e. $h_{v,\delta}\leq T(h)$ and $h_{v^\prime,\delta^\prime}\leq T(h)$. Since $T$ is onto, there exist $g$ and $g^\prime$ in ${\mathscr C}(E)$ such that $h_{v,\delta}=T(g)$ and $h_{v^\prime,\delta^\prime}=T(g^\prime)$, so that $T(g)\leq T(h)$ and $T(g^\prime)\leq T(h)$. Since $T$ is fully order preserving, we get $g\leq h$ and $g^\prime\leq h$. By Proposition \ref{comparison}, there exist $\eta$ and $\eta^\prime$ in $L^0({\mathcal F})$ such that $g=h_{u,\eta}$ and $g^\prime=h_{u,\eta^\prime}$. Let $A=[\eta\geq \eta^\prime]$ and denote $\eta^{\prime\prime}={\tilde I}_A\eta+{\tilde I}_{A^c}\eta^\prime$, then $\eta\leq\eta^{\prime\prime}$ and $\eta^\prime\leq\eta^{\prime\prime}$. Denote $g^{\prime\prime}={\tilde I}_Ag+{\tilde I}_{A^c}g^\prime=h_{u,\eta^{\prime\prime}}$, then $g\leq g^{\prime\prime}$ and $g^\prime\leq g^{\prime\prime}$ and by the assumption that $T$ is stable, $T(g^{\prime\prime})={\tilde I}_AT(g)+{\tilde I}_{A^c}T(g^\prime)=h_{v^{\prime\prime},\delta^{\prime\prime}}$, where $v^{\prime\prime}={\tilde I}_Av+{\tilde I}_{A^c}v^\prime$ and $\delta^{\prime\prime}={\tilde I}_A\delta+{\tilde I}_{A^c}\delta^\prime$. Since $T$ is order preserving,
$h_{v,\delta}=T(g)\leq T(g^{\prime\prime})=h_{v^{\prime\prime},\delta^{\prime\prime}}$ and $h_{v^\prime,\delta^\prime}=T(g^\prime)\leq T(g^{\prime\prime})=h_{v^{\prime\prime},\delta^{\prime\prime}}$.
  It follows from Proposition \ref{comparison} that $v=v^\prime=v^{\prime\prime}$. We have proved that there exists a unique $v\in E^*$ such that $h_{v,\delta}\in {\mathscr A}(T(h))$ for some $\delta\in L^0({\mathcal F})$, and hence $T(h)$ is $L^0$-affine by Proposition \ref{AffCha}.

(2) By Proposition \ref{inverse}, $T$'s inverse $T^{-1}$ is also stable and fully order preserving. It follows that $T^{-1}$ also maps an $L^0$-affine function to an $L^0$-affine function, which implies the result.
\end{proof}

We have seen that any stable and fully order preserving operator maps ${\mathscr A}(E)$ to ${\mathscr A}(E)$. For a stable and fully order preserving operator $T: \mathscr{C}(E) \rightarrow \mathscr{C}(E)$, denote by $\hat T:{\mathscr A}(E)\to {\mathscr A}(E)$ the restriction of $T$ to ${\mathscr A}(E)$. Note that $\hat T$ inherits from $T$ the properties of being onto and fully order preserving on ${\mathscr A}(E)$.

For the following discussion, we recall some notions from \cite{WGL}. Let $E_1$ and $E_2$ be two $L^0({\mathcal F})$-modules, and $R: E_1\to E_2$ a mapping.
$R$ is said to be $L^0$-linear, if $R$ is a module homomorphism, namely $R(x+y)=R(x)+R(y)$ for any $x,y\in E_1$, and $R(\xi x)=\xi R(x)$ for any $\xi\in L^0({\mathcal F})$ and any $x\in E_1$; $R$ is said to be $L^0$-affine, if $R(\lambda x+(1-\lambda)y)=\lambda R(x)+(1-\lambda)R(y)$ for any $\lambda\in L^0({\mathcal F})$ and any $x,y\in E_1$, equivalently, $R(\cdot)-R(\theta)$ is $L^0$-linear; $R$ is said to be stable, if $R({\tilde I}_A x+{\tilde I}_{A^c} y)={\tilde I}_AR(x)+{\tilde I}_{A^c} R(y)$ for all $x,y\in E_1$ and $A\in {\mathcal F}$.

By associating $h_{u,\alpha}$ to the pair $(u,\alpha)$, we can identify ${\mathscr A}(E)$ with $E^*\times L^0({\mathcal F})$, and hence we take $\hat T$ as an operator acting on $E^*t\times L^0({\mathcal F})$ and write ${\hat T} (u,\alpha)$ instead of ${\hat T}(h_{u,\alpha})$. We will prove next that $\hat T$ is $L^0$-affine.

\begin{proposition}\label{hT}
$\hat T:E^*\times L^0({\mathcal F})\to E^*\times L^0({\mathcal F})$ is bijective and stable.
\end{proposition}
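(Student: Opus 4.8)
The plan is to verify bijectivity and stability of $\hat T$ directly from the corresponding properties of $T$, using the identification of ${\mathscr A}(E)$ with $E^*\times L^0({\mathcal F})$ via $h_{u,\alpha}\leftrightarrow(u,\alpha)$, which is a bijection by Proposition \ref{comparison} (distinct pairs give distinct affine functions). First I would record that $\hat T$ is well-defined as a map from ${\mathscr A}(E)$ to ${\mathscr A}(E)$: this is exactly part (1) of the preceding proposition, which says $T(h)\in{\mathscr A}(E)$ whenever $h\in{\mathscr A}(E)$. So $\hat T$ is a genuine operator on $E^*\times L^0({\mathcal F})$.

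For bijectivity, injectivity is immediate: if $\hat T(h_1)=\hat T(h_2)$ with $h_1,h_2\in{\mathscr A}(E)$, then since $T$ is fully order preserving it is in particular injective on all of ${\mathscr C}(E)$ (from $T(h_1)\leq T(h_2)$ and $T(h_2)\leq T(h_1)$ we get $h_1\leq h_2$ and $h_2\leq h_1$, hence $h_1=h_2$), so $\hat T$ is injective. For surjectivity, take any $h'\in{\mathscr A}(E)$; since $T$ is onto ${\mathscr C}(E)$, there is $f\in{\mathscr C}(E)$ with $T(f)=h'\in{\mathscr A}(E)$, and then part (2) of the preceding proposition forces $f\in{\mathscr A}(E)$, so $h'=\hat T(f)$ lies in the range of $\hat T$. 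Thus $\hat T$ is a bijection of ${\mathscr A}(E)$, equivalently of $E^*\times L^0({\mathcal F})$.

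Stability of $\hat T$ reduces to stability of $T$ together with the observation that the identification ${\mathscr A}(E)\cong E^*\times L^0({\mathcal F})$ respects the gluing operation $\tilde I_A(\cdot)+\tilde I_{A^c}(\cdot)$. Concretely, for $h_{u_1,\alpha_1},h_{u_2,\alpha_2}\in{\mathscr A}(E)$ and $A\in{\mathcal F}$ one checks pointwise that $\tilde I_A h_{u_1,\alpha_1}+\tilde I_{A^c}h_{u_2,\alpha_2}=h_{u,\alpha}$ where $u=\tilde I_A u_1+\tilde I_{A^c}u_2$ and $\alpha=\tilde I_A\alpha_1+\tilde I_{A^c}\alpha_2$, i.e. this element of ${\mathscr A}(E)$ corresponds to the pair $\tilde I_A(u_1,\alpha_1)+\tilde I_{A^c}(u_2,\alpha_2)$ in $E^*\times L^0({\mathcal F})$. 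Applying $T$ and using that $T$ is stable gives $T(\tilde I_A h_{u_1,\alpha_1}+\tilde I_{A^c}h_{u_2,\alpha_2})=\tilde I_A T(h_{u_1,\alpha_1})+\tilde I_{A^c}T(h_{u_2,\alpha_2})$; since all three terms lie in ${\mathscr A}(E)$, translating back through the identification yields $\hat T(\tilde I_A(u_1,\alpha_1)+\tilde I_{A^c}(u_2,\alpha_2))=\tilde I_A\hat T(u_1,\alpha_1)+\tilde I_{A^c}\hat T(u_2,\alpha_2)$, which is precisely stability of $\hat T$.

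None of the steps presents a serious obstacle, since all the substantive work (that $T$ preserves ${\mathscr A}(E)$ in both directions, injectivity from full order preservation, the dictionary between ${\mathscr A}(E)$ and $E^*\times L^0({\mathcal F})$) is already in place; the only point requiring a little care is the routine verification that the coordinate description of the glued affine function $\tilde I_A h_{u_1,\alpha_1}+\tilde I_{A^c}h_{u_2,\alpha_2}$ is exactly $(\tilde I_A u_1+\tilde I_{A^c}u_2,\ \tilde I_A\alpha_1+\tilde I_{A^c}\alpha_2)$, which follows by evaluating at an arbitrary $x\in E$ and using the definition of the $L^0({\mathcal F})$-module structure.
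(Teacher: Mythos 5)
Your proof is correct and follows essentially the same route as the paper: the stability argument is identical (gluing of affine functions corresponds to gluing of pairs, then apply stability of $T$), and the bijectivity part simply spells out in detail what the paper dismisses as obvious, namely that $\hat T$ inherits injectivity from full order preservation and surjectivity from the ontoness of $T$ together with the fact that $T^{-1}$ also maps ${\mathscr A}(E)$ into ${\mathscr A}(E)$.
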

\begin{proof}
 It is obvious that $\hat T$ is a bijection. We only need to show that $\hat T$ is stable. For any $(u,\alpha)$ and $(v,\beta)$ in $E^*\times L^0({\mathcal F})$ and any $A$ in ${\mathcal F}$, since $$h_{{\tilde I}_A u+{\tilde I}_{A^c} v, {\tilde I}_A \alpha+{\tilde I}_{A^c} \beta}={\tilde I}_Ah_{u,\alpha}+{\tilde I}_{A^c}h_{v,\beta},$$ it follows from the stable property of $T$ that $$T(h_{{\tilde I}_A u+{\tilde I}_{A^c} v, {\tilde I}_A \alpha+{\tilde I}_{A^c} \beta})={\tilde I}_AT(h_{u,\alpha})+{\tilde I}_{A^c}T(h_{v,\beta}),$$ namely $$\hat T({\tilde I}_A u+{\tilde I}_{A^c} v, {\tilde I}_A \alpha+{\tilde I}_{A^c} \beta)={\tilde I}_A\hat T(u,\alpha)+{\tilde I}_{A^c}\hat(v,\beta),$$ thus $\hat T$ is stable.
\end{proof}

Lemma \ref{uni} below tells us that for the mapping ${\hat T}: (u,a)\mapsto (v,\beta)$, $v$ only depends on $u$.

\begin{lemma}\label{uni}
Suppose that ${\hat T}(u,\alpha_1)=(v_1,\beta_1)$ and ${\hat T}(u,\alpha_2)=(v_2,\beta_2)$, then we have $v_1=v_2$. Conversely, if ${\hat T}(u_1,\alpha_1)=(v,\beta_1)$ and ${\hat T}(u_2,\alpha_2)=(v,\beta_2)$, then we have $u_1=u_2$.
\end{lemma}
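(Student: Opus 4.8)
The plan is to exploit the order-theoretic structure of $\hat T$ together with Proposition \ref{comparison} and Proposition \ref{subdiff}, using the fact that $\hat T$ is an order-isomorphism of ${\mathscr A}(E)$ onto itself that is also stable. I would prove both halves of the statement in parallel, since the converse is just the first assertion applied to $\hat T^{-1}$ (which by Proposition \ref{inverse} is again stable and fully order preserving, hence restricts to a stable order-isomorphism of ${\mathscr A}(E)$), so it suffices to establish: if ${\hat T}(u,\alpha_1)=(v_1,\beta_1)$ and ${\hat T}(u,\alpha_2)=(v_2,\beta_2)$ then $v_1=v_2$.

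For the main claim, first I would reduce to the case $\alpha_1 \leq \alpha_2$: setting $A=[\alpha_1\geq\alpha_2]$ and splitting via stability (replacing $\alpha_1$ by $\tilde I_A\alpha_2 + \tilde I_{A^c}\alpha_1$ on one piece and $\alpha_2$ by $\tilde I_A\alpha_2+\tilde I_{A^c}\alpha_1$ on the other, exactly as in the proof that $T$ preserves $L^0$-affineness), it is enough to handle the two comparable cases and then glue the conclusions $\tilde I_A v_1 = \tilde I_A v_2$ and $\tilde I_{A^c} v_1 = \tilde I_{A^c} v_2$ back together. So assume $\alpha_1\leq\alpha_2$, i.e. $h_{u,\alpha_1}\leq h_{u,\alpha_2}$. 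Since $\hat T$ is order preserving, $h_{v_1,\beta_1}=\hat T(u,\alpha_1)\leq \hat T(u,\alpha_2)=h_{v_2,\beta_2}$, and Proposition \ref{comparison} immediately forces $v_1=v_2$ (and $\beta_1\leq\beta_2$). That disposes of the comparable case, and the stability-gluing argument above then yields $v_1=v_2$ in general. The converse follows by applying what we just proved to $\widehat{T^{-1}}=\hat T^{-1}$: from ${\hat T}(u_1,\alpha_1)=(v,\beta_1)$ and ${\hat T}(u_2,\alpha_2)=(v,\beta_2)$ we get $\hat T^{-1}(v,\beta_1)=(u_1,\alpha_1)$ and $\hat T^{-1}(v,\beta_2)=(u_2,\alpha_2)$ with the same first coordinate $v$, so the first assertion (applied to $\hat T^{-1}$) gives $u_1=u_2$.

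I do not expect a serious obstacle here; the only point requiring care is the passage from the general (partially ordered) case to the comparable case, where one must genuinely use stability because $(L^0({\mathcal F}),\leq)$ is not totally ordered — on no single event can one assume $\alpha_1\leq\alpha_2$ or $\alpha_2\leq\alpha_1$ outright. The partition $A=[\alpha_1\geq\alpha_2]$ handles this cleanly: on $A$ one compares with the ``max'' affine function and on $A^c$ with the ``min'', and stability of $\hat T$ turns these local comparisons into local equalities of the $v$-components which reassemble into the global equality. Everything else is a direct invocation of Proposition \ref{comparison} and the order-isomorphism property of $\hat T$ established in the preceding propositions.
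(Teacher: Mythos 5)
Your proof is correct and follows essentially the same route as the paper: the paper simply picks any $\alpha\geq\alpha_1\vee\alpha_2$, notes $h_{u,\alpha_i}\leq h_{u,\alpha}$ for $i=1,2$, and applies Proposition \ref{comparison} twice, then handles the converse via $\hat T^{-1}$ exactly as you do. Your intermediate element $\alpha_1\wedge\alpha_2$ plays the same role, and since it is \emph{globally} comparable to both $\alpha_1$ and $\alpha_2$, the stability-splitting and gluing on $A=[\alpha_1\geq\alpha_2]$ is actually superfluous (though harmless) --- stability is not needed anywhere in this lemma.
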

\begin{proof}
Arbitrarily choose an $\alpha\in L^0({\mathcal F})$ such that $\alpha\geq \alpha_1 \vee \alpha_2$ and suppose that ${\hat T}(u,\alpha)=(v,\beta)$. Since $h_{u,\alpha}\geq h_{u,\alpha_1}$ and $T$ is fully order preserving, we have $T(h_{u,\alpha})\geq T(h_{u,\alpha_1})$, namely $h_{v,\beta}\geq h_{v_1,\beta_1}$,
which implies $v_1=v$ by Proposition \ref{comparison}. Similarly we have $v_2=v$, thus $v_1=v_2$. By considering the inverse of $\hat T$ which is obvious $\hat {T^{-1}}$, the converse part immediately follows.
\end{proof}

\begin{proposition}\label{supp}
 Assume that ${\hat T}(u_1,\alpha_1)=(v_1,\beta_1)$ and ${\hat T}(u_2,\alpha_2)=(v_2,\beta_2)$. If $u_1-u_2$ has full support, then $v_1-v_2$ has full support.
\end{proposition}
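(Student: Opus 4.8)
The plan is to argue by contradiction on the complement of the support of $v_1-v_2$. Suppose $v_1-v_2$ does not have full support, so there is $A\in{\mathcal F}_+$ with $\tilde I_A(v_1-v_2)=0$, i.e. $\tilde I_A v_1=\tilde I_A v_2$. The idea is to localize on $A$ via the stability of $\hat T$: by Proposition \ref{hT} and the stable property, for any $\alpha,\alpha'$ we have $\hat T(\tilde I_A u_1+\tilde I_{A^c}u_2,\ \tilde I_A\alpha_1+\tilde I_{A^c}\alpha_2)=\tilde I_A(v_1,\beta_1)+\tilde I_{A^c}(v_2,\beta_2)$, whose first coordinate is $\tilde I_A v_1+\tilde I_{A^c}v_2$. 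Now compare two such inputs. First I would apply $\hat T$ to $(u_1,\alpha_1)$ and to the "glued" element $(\tilde I_A u_1+\tilde I_{A^c}u_2,\ \tilde I_A\alpha_1+\tilde I_{A^c}\alpha_2)$; the second has first-coordinate image $\tilde I_A v_1+\tilde I_{A^c}v_2$, while the first has image $v_1$. On the set $A$ these two images agree (both equal $\tilde I_A v_1$). Symmetrically, gluing with $(u_2,\alpha_2)$ shows the image of $(u_2,\alpha_2)$, namely $v_2$, agrees with $\tilde I_A v_1+\tilde I_{A^c}v_2$ on $A^c$ trivially but, more to the point, we can set up two inputs that differ on $A$ (in the $u$-coordinate) yet whose $\hat T$-images coincide on $A$, contradicting Lemma \ref{uni} after a further localization.

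Concretely, here is the cleaner route I would take. Put $u_3=\tilde I_A u_1+\tilde I_{A^c}u_2$ and $u_4=\tilde I_A u_2+\tilde I_{A^c}u_2=u_2$; then $u_3-u_4=\tilde I_A(u_1-u_2)$, which has full support on $A$ but is $\theta$ on $A^c$. Applying $\hat T$ (with suitably glued second coordinates, using Proposition \ref{hT}) gives images whose first coordinates are $\tilde I_A v_1+\tilde I_{A^c}v_2$ and $v_2$ respectively; their difference is $\tilde I_A(v_1-v_2)$, which is $\theta$ by our assumption $\tilde I_A v_1=\tilde I_A v_2$. Thus $\hat T$ sends $u_3$ and $u_4$ (in the first coordinate) to the same element, so by the converse part of Lemma \ref{uni} we must have $u_3=u_4$, i.e. $\tilde I_A(u_1-u_2)=\theta$, contradicting that $u_1-u_2$ has full support (its restriction to the positive-probability set $A$ cannot vanish). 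Hence $v_1-v_2$ has full support.

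The step I expect to be the main obstacle is making the localization rigorous: one must check that the glued pairs genuinely lie in $E^*\times L^0({\mathcal F})$ and that $\hat T$ applied to them really does split as claimed — this is exactly the content of Proposition \ref{hT} (stability of $\hat T$) together with the identification $h_{\tilde I_A u+\tilde I_{A^c}v,\ \tilde I_A\alpha+\tilde I_{A^c}\beta}=\tilde I_A h_{u,\alpha}+\tilde I_{A^c}h_{v,\beta}$, so no new machinery is needed. A secondary subtlety is that "full support" for $u_1-u_2$ means $\|u_1-u_2\|\in L^0_{++}({\mathcal F})$, so its restriction to any $A\in{\mathcal F}_+$ is nonzero on a positive-probability set; invoking Lemma \ref{uni} to conclude $u_3=u_4$ and then reading off the contradiction on $A$ requires only that $A$ has positive probability, which it does. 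Everything else is a routine bookkeeping of indicators.
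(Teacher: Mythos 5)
Your proposal is correct and follows essentially the same route as the paper: glue $u=\tilde I_Au_1+\tilde I_{A^c}u_2$ (with glued $\alpha$'s), use stability of $\hat T$ to see its image has first coordinate $\tilde I_Av_1+\tilde I_{A^c}v_2=v_2$ under the contradiction hypothesis, then invoke the converse part of Lemma \ref{uni} to force $\tilde I_Au_1=\tilde I_Au_2$, contradicting full support of $u_1-u_2$. The "cleaner route" in your second paragraph is exactly the paper's argument.
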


\begin{proof}

We prove the conclusion by contradiction. Suppose there exists $A\in {\mathcal F}_+$ such that ${\tilde I}_A (v_1-v_2)=0$, equivalently, ${\tilde I}_Av_1={\tilde I}_Av_2$. Let $u={\tilde I}_Au_1+{\tilde I}_{A^c}u_2$ and $\alpha ={\tilde I}_A\alpha_1+{\tilde I}_{A^c}\alpha_2$, since $\hat T$ is stable, we have ${\hat T} (u, \alpha)={\tilde I}_A{\hat T}(u_1,\alpha_1)+{\tilde I}_{A^c}{\hat T}(u_2,\alpha_2)={\tilde I}_A(v_1,\beta_1)+{\tilde I}_{A^c}(v_2,\beta_2)=(v_2,{\tilde I}_A\beta_1+{\tilde I}_{A^c}\beta_2)$. It follows from Lemma \ref{uni} that $u=u_2$, then ${\tilde I}_Au={\tilde I}_Au_1={\tilde I}_Au_2$, which contradicts to the assumption that $u_1-u_2$ has full support.
\end{proof}

 Let us recall the notion of $L^0$-line segments \cite{WGL}: for any two elements $x$ and $y$ of an $L^0({\mathcal F})$-module, $[x,y]:=\{\lambda x+(1-\lambda)y: \lambda\in L^0_+({\mathcal F}), 0\leq \lambda\leq 1\}$, called the $L^0$-line segment linking $x$ and $y$.

 To establish the $L^0$-affineness of $\hat T$, we need clarify $\hat T$'s action on $L^0$-line segments. Please bare in mind that for any $h_1$ and $h_2$ in ${\mathscr A}(E)$, $h_1<h_2$ means $h_1\leq h_2$ and $h_1\neq h_2$.

\begin{proposition}\label{line-line}
$\hat T:E^*\times L^0({\mathcal F})\to E^*\times L^0({\mathcal F})$ maps any $L^0$-line segment to an $L^0$-line segment. Namely, $\hat T([z_1,z_2])=[\hat Tz_1,\hat Tz_2]$ for any $z_1$ and $z_2$ in $E^*\times L^0({\mathcal F})$.
\end{proposition}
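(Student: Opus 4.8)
The plan is to prove the two inclusions $\hat T([z_1,z_2])\subseteq [\hat Tz_1,\hat Tz_2]$ and $[\hat Tz_1,\hat Tz_2]\subseteq \hat T([z_1,z_2])$ separately; by applying the same argument to $\hat T^{-1}$ (which by Proposition \ref{inverse} is again stable and fully order preserving, hence its restriction $\widehat{T^{-1}}=\hat T^{-1}$ enjoys all the same properties), the second inclusion follows from the first applied to $\hat T^{-1}$. So it suffices to show $\hat Tz\in [\hat Tz_1,\hat Tz_2]$ for every $z\in[z_1,z_2]$. Writing $z_i=(u_i,\alpha_i)$ and $z=(u,\alpha)$ with $u=\mu u_1+(1-\mu)u_2$, $\alpha=\mu\alpha_1+(1-\mu)\alpha_2$ for some $\mu\in L^0(\mathcal F)$, $0\le\mu\le1$, and $\hat Tz_i=(v_i,\beta_i)$, $\hat Tz=(v,\beta)$, the target is to produce $\nu\in L^0(\mathcal F)$ with $0\le\nu\le1$ such that $v=\nu v_1+(1-\nu)v_2$ and $\beta=\nu\beta_1+(1-\nu)\beta_2$.

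The key idea is the order-theoretic characterization of a point on an $L^0$-line segment: since $h_{u,\alpha}\le h_{u_1,\alpha_1}\vee h_{u_2,\alpha_2}$ (this is where the convex-combination structure is used — $h_{u,\alpha}$ lies below the pointwise max of the two endpoints), and $T$ is order preserving, we get $h_{v,\beta}=T(h_{u,\alpha})\le T(h_{u_1,\alpha_1})\vee T(h_{u_2,\alpha_2})=h_{v_1,\beta_1}\vee h_{v_2,\beta_2}$ by Proposition \ref{exchange}. Now apply Proposition \ref{subdiff} to conclude $v=\nu v_1+(1-\nu)v_2$ for some $\nu\in L^0(\mathcal F)$ with $0\le\nu\le1$. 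This already places the first coordinate of $\hat Tz$ on the segment $[v_1,v_2]$; symmetrically, running the same argument with $\hat T^{-1}$ shows $u$ lies on $[u_1,u_2]$ with some multiplier, which should let me pin down the relationship between $\mu$ and $\nu$ on the set where $v_1-v_2$ (equivalently, by Proposition \ref{supp}, where $u_1-u_2$) has full support.

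The main obstacle — and the part requiring care — is handling the second coordinate $\beta$ and showing it is governed by the \emph{same} multiplier $\nu$. The natural route is a sandwiching argument: for the lower bound, note $h_{u,\alpha}\ge \nu' h_{u_1,\alpha_1}+(1-\nu')h_{u_2,\alpha_2}$ fails in general because $T$ need not respect $L^0$-convex combinations directly, so instead one squeezes $h_{v,\beta}$ between affine functions of the form $h_{v,\gamma}$ obtained by comparing $h_{u,\alpha}$ with suitable translates $h_{u_i,\alpha_i\pm\varepsilon}$ and using full order preservation plus Lemma \ref{uni} (which forces the first coordinate of the image to depend only on the first coordinate of the argument). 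Concretely: for $\gamma<\beta$, $h_{v,\gamma}<h_{v,\beta}=\hat T z$, pull back to get $h_{u,\gamma'}<h_{u,\alpha}$ for the corresponding $\gamma'<\alpha$, and track how $\gamma'$ relates to $\mu\alpha_1+(1-\mu)\alpha_2$; pushing forward the extremal such affine functions and invoking the stability of $\hat T$ on the partition where $v_1=v_2$ versus where $v_1-v_2$ has full support should yield $\beta=\nu\beta_1+(1-\nu)\beta_2$.

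One must also dispose of the degenerate cases separately: where $u_1=u_2$ on some $A\in\mathcal F_+$, the segment $[z_1,z_2]$ restricted to $A$ is a vertical segment $\{u_1\}\times[\alpha_1,\alpha_2]$ (an $L^0$-interval in the second coordinate), and there one argues directly — $h_{u_1,\alpha}$ for $\alpha$ between $\alpha_1$ and $\alpha_2$ maps, by Lemma \ref{uni}, to $h_{v,\beta}$ with $v=v_1=v_2$, and the order-preservation sandwich $h_{v_1,\beta_1\wedge\beta_2}\le h_{v,\beta}\le h_{v_1,\beta_1\vee\beta_2}$ together with Proposition \ref{comparison} forces $\beta$ into $[\beta_1,\beta_2]$, while a monotonicity/surjectivity argument shows every such $\beta$ is attained. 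Patching the full-support part and the degenerate part via a countable partition of $\Omega$ (using stability of $\hat T$ throughout) completes the proof; I expect the bookkeeping in this patching, rather than any single conceptual difficulty, to be the most delicate aspect.
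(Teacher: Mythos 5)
Your skeleton matches the paper's: reduce to the inclusion $\hat T([z_1,z_2])\subseteq[\hat Tz_1,\hat Tz_2]$ via $\hat T^{-1}$, get the first coordinate from $h_{u,\alpha}\le h_{u_1,\alpha_1}\vee h_{u_2,\alpha_2}$ together with Proposition \ref{exchange} and Proposition \ref{subdiff}, split into the case $u_1=u_2$ and the case where $u_1-u_2$ has full support, and patch with stability. The degenerate case is also handled essentially as in the paper (there the sandwich $\hat T(u,\alpha_1\wedge\alpha_2)=(v,\beta_1\wedge\beta_2)$ needs the two-sided argument through $\hat T^{-1}$, which you only gesture at, but that is routine).

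The genuine gap is the second coordinate in the full-support case. You correctly identify it as the hard part, but the route you sketch --- comparing $h_{u,\alpha}$ with translates $h_{u_i,\alpha_i\pm\varepsilon}$, pulling back $h_{v,\gamma}$ for $\gamma<\beta$ to some $h_{u,\gamma'}$ and ``tracking how $\gamma'$ relates to $\mu\alpha_1+(1-\mu)\alpha_2$'' --- does not close. Along the vertical fiber $\{u\}\times L^0(\mathcal F)$, all you know at this stage is that $\alpha\mapsto\beta$ is an order-preserving bijection of $L^0(\mathcal F)$; there is no a priori affine relation between $\gamma$ and $\gamma'$, so ``tracking'' the translate gives you nothing that ties $\beta$ to the coefficient $\nu$ produced by Proposition \ref{subdiff} for the first coordinate. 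The missing idea is an evaluation at the crossing point: since $v_1-v_2$ has full support, Proposition \ref{support}(2) yields $x_0\in E$ with $\langle v_1,x_0\rangle+\beta_1=\langle v_2,x_0\rangle+\beta_2$, so the upper envelope $h_{v_1,\beta_1}\vee h_{v_2,\beta_2}$ is attained by both affine functions at $x_0$. Setting $\epsilon=(\langle v_1,x_0\rangle+\beta_1)-(\langle v,x_0\rangle+\beta)\ge 0$, one checks via the inequality $\langle v,x\rangle\le\langle v_1,x\rangle\vee\langle v_2,x\rangle$ that $h_{v,\beta+\epsilon}\le h_{v_1,\beta_1}\vee h_{v_2,\beta_2}$ still holds; if $\epsilon>0$ this pulls back under $T^{-1}$ to $h_{u,\alpha'}\le h_{u_1,\alpha_1}\vee h_{u_2,\alpha_2}$ with $\alpha'>\alpha$, which is contradicted by evaluating at the analogous crossing point $x_1$ for $u_1,u_2$ (where $h_{u,\alpha}$ already touches the envelope because it is the exact convex combination $\lambda h_{u_1,\alpha_1}+(1-\lambda)h_{u_2,\alpha_2}$). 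Hence $\epsilon=0$, and substituting $v=\nu v_1+(1-\nu)v_2$ into the resulting equality $\langle v,x_0\rangle+\beta=\langle v_1,x_0\rangle+\beta_1=\langle v_2,x_0\rangle+\beta_2$ forces $\beta=\nu\beta_1+(1-\nu)\beta_2$ with the \emph{same} $\nu$. Without this step your argument does not establish that the two coordinates share one multiplier, which is the whole content of the proposition.
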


\begin{proof}
 Observe that if we can prove $\hat T$ maps any $L^0$-line segment into an $L^0$-line segment, namely $\hat T([z_1,z_2])\subset [\hat Tz_1,\hat Tz_2]$ for any $z_1$ and $z_2$ in $E^*\times L^0({\mathcal F})$, by considering $\hat T^{-1}$ we can see that $\hat T$ would also map any $L^0$-line segment onto an $L^0$-line segment, namely, $\hat T([z_1,z_2])=[\hat Tz_1,\hat Tz_2]$ for any $z_1$ and $z_2$ in $E^*\times L^0({\mathcal F})$. Thus we only need to show that $\hat T$ maps any $L^0$-line segment into an $L^0$-line segment.

Fix two elements $(u_1,\alpha_1)$ and $(u_2,\alpha_2)$ in $E^*\times L^0({\mathcal F})$. Assume ${\hat T}(u_1,\alpha_1)=(v_1,\beta_1)$ and ${\hat T}(u_2,\alpha_2)=(v_2,\beta_2)$. Fix a $\lambda\in L^0({\mathcal F})$ with $0\leq\lambda\leq 1$, let $u=\lambda u_1+(1-\lambda)u_2$ and $\alpha=\lambda\alpha_1+(1-\lambda)\alpha_2$, and suppose ${\hat T}(u,\alpha)=(v,\beta)$, we shall show that there exists $\mu\in L^0({\mathcal F})$ with $0\leq\mu\leq 1$ such that $v=\mu v_1+(1-\mu)v_2$ and $\beta=\mu \beta_1+(1-\mu)\beta_2$. The proof is divided into 3 steps.

{\it Step 1}. If $u_1=u_2$, then the conclusion holds.

From the construction we see that $u=u_1=u_2$, then it follows from Lemma \ref{uni} that $v=v_1=v_2$. We only need to find $\mu\in L^0({\mathcal F})$ with $0\leq\mu\leq 1$ such that $\beta=\mu \beta_1+(1-\mu)\beta_2$.

By Proposition \ref{exchange} we obtain
$${\hat T}(u,\alpha_1\vee \alpha_2)=(v, \beta_1\vee\beta_2).$$
Since $\hat T$ is fully order preserving, we have
$${\hat T}(u,\alpha_1\wedge \alpha_2)\leq ({\hat T}(u,\alpha_1)\wedge {\hat T}(u,\alpha_2))=(v,\beta_1\wedge\beta_2),$$
similarly, by considering the fully order preserving operator $\hat T^{-1}$ we get
$$\hat T^{-1}(v,\beta_1\wedge\beta_2)\leq (u,\alpha_1\wedge \alpha_2),$$
then it follows that
$$(v,\beta_1\wedge\beta_2)\leq \hat T(u,\alpha_1\wedge \alpha_2),$$
 thus we obtain
 $${\hat T}(u,\alpha_1\wedge \alpha_2)=(v, \beta_1\wedge\beta_2).$$
  Note that $(\alpha_1\wedge \alpha_2)\leq\alpha\leq (\alpha_1\vee \alpha_2)$,
  we thus deduce
  $$(\beta_1\wedge\beta_2)\leq \beta\leq (\beta_1\vee\beta_2).$$
  Then there must exist $\mu_0\in L^0({\mathcal F})$ with $0\leq\mu_0\leq 1$ such that $$\beta=\mu_0 (\beta_1\wedge\beta_2)+(1-\mu_0)(\beta_1\vee\beta_2).$$
  Denote $A=[\beta_1\geq \beta_2]$, then $(\beta_1\wedge\beta_2)=I_{A^c}\beta_1+I_{A}\beta_2$ and $(\beta_1\vee\beta_2)=I_{A}\beta_1+I_{A^c}\beta_2$. Let $\mu=\mu_0I_{A^c}+(1-\mu_0)I_{A}$, we see that $0\leq \mu\leq 1$ and $\beta=\mu \beta_1+(1-\mu)\beta_2$.

{\it Step 2}. If $u_1-u_2$ has full support, then the conclusion holds.

Note by Proposition \ref{supp}, $v_1-v_2$ also has full support.

Since $$h_{u,\alpha}=\lambda h_{u_1,\alpha_1}+(1-\lambda)h_{u_2,\alpha_2}\leq h_{u_1,\alpha_1}\vee h_{u_2,\alpha_2},$$
and $T$ is fully order preserving, we obtain
$$T(h_{u,\alpha})\leq T(h_{u_1,\alpha_1}\vee h_{u_2,\alpha_2}).$$
From Proposition \ref{exchange}, $T(h_{u_1,\alpha_1}\vee h_{u_2,\alpha_2})=T(h_{u_1,\alpha_1})\vee T(h_{u_2,\alpha_2})$, thus we get $h_{v,\beta}\leq h_{v_1,\beta_1}\vee h_{v_2,\beta_2}$, namely
\begin{equation}\label{P308:1}
\langle v,x\rangle+\beta\leq (\langle v_1,x\rangle+\beta_1)\vee (\langle v_2,x\rangle+\beta_2), \forall x\in E,
\end{equation}
by Proposition \ref{subdiff},
\begin{equation}\label{P308:2}
\langle v,x\rangle\leq (\langle v_1,x\rangle\vee \langle v_2,x\rangle), \forall x\in E,
\end{equation}
and there exists $\mu\in L^0({\mathcal F})$ with $0\leq\mu\leq 1$ such that $v=\mu v_1+(1-\mu)v_2$. It remains to show that $\beta=\mu \beta_1+(1-\mu)\beta_2$ holds for this $\mu$. To this end, we first show that there exists an $x_0\in E$ such that
\begin{equation}\label{P308:3}
\langle v,x_0\rangle+\beta=\langle v_1,x_0\rangle+\beta_1=\langle v_2,x_0\rangle+\beta_2.
\end{equation}

Since $v_1-v_2$ has full support, it follows from Proposition \ref{support}(2) that there exists an $x_0\in E$ such that $\langle v_1-v_2, x_0\rangle=\beta_2-\beta_1$, equivalently, $\langle v_1,x_0\rangle+\beta_1=\langle v_2,x_0\rangle+\beta_2$. Let $\epsilon=(\langle v_1,x_0\rangle+\beta_1)-(\langle v,x_0\rangle+\beta)$, from \eqref{P308:1} we see that $\epsilon\geq 0$. To show that \eqref{P308:3} holds for the $x_0$, it suffices to show that $\epsilon=0$.

For any $x\in E$, it follows from \eqref{P308:2} that
\begin{equation*}
 \langle v,x-x_0\rangle\leq (\langle v_1,x-x_0\rangle)\vee (\langle v_2,x-x_0\rangle),
\end{equation*}
thus, \begin{eqnarray*}
       & &\langle v,x\rangle+\beta+\epsilon\\
       &=&\langle v,x-x_0\rangle+\langle v,x_0\rangle+\beta+\epsilon \\
       &=& \langle v,x-x_0\rangle+(\langle v_1,x_0\rangle+\beta_1)\\
       &\leq& (\langle v_1,x-x_0\rangle)\vee (\langle v_2,x-x_0\rangle)+(\langle v_1,x_0\rangle+\beta_1)\\
       &=&(\langle v_1,x-x_0\rangle+\langle v_1,x_0\rangle+\beta_1)\vee (\langle v_2,x-x_0\rangle+\langle v_1,x_0\rangle+\beta_1)\\
       &=&(\langle v_1,x-x_0\rangle+\langle v_1,x_0\rangle+\beta_1)\vee (\langle v_2,x-x_0\rangle+\langle v_2,x_0\rangle+\beta_2)\\
       &=&(\langle v_1,x\rangle+\beta_1) \vee (\langle v_2,x\rangle+\beta_2),
\end{eqnarray*}
namely we get
\begin{equation}\label{P308:4}
h_{v,\beta+\epsilon}\leq h_{v_1,\beta_1}\vee h_{v_2,\beta_2}.
\end{equation}

We can now show $\epsilon=0$ by contradiction.

If $\epsilon >0$, then $h_{v,\beta}<h_{v,\beta+\epsilon}$. Due to our assumption that $T(h_{u,\alpha})=h_{v,\beta}$, from Lemma \ref{uni} we can assume that $T(h_{u,\alpha^\prime})=h_{v,\beta+\epsilon}$. Since $T$ is fully order preserving, we must have $\alpha^\prime>\alpha$, besides, noting that $T(h_{u_1,\alpha_1})=h_{v_1,\beta_1}$ and $T(h_{u_2,\alpha_2})=h_{v_2,\beta_2}$, from \eqref{P308:4} we have
\begin{equation}\label{P308:5}
h_{u,\alpha^\prime}\leq h_{u_1,\alpha_1}\vee h_{u_2,\alpha_2}.
\end{equation}
 Using the assumption that $u_1-u_2$ has full support, by Proposition \ref{support}(2) there exists some $x_1\in E$ such that $\langle u_1-u_2, x_1\rangle=\alpha_2-\alpha_1$, equivalently, $\langle u_1,x_1\rangle+\alpha_1=\langle u_2,x_1\rangle+\alpha_2$, namely $h_{u_1,\alpha_1}(x_1)=h_{u_2,\alpha_2}(x_1)$. Then using $h_{u,\alpha}=\lambda h_{u_1,\alpha_1}+(1-\lambda)h_{u_2,\alpha_2}$ we get
 \begin{eqnarray*}
       h_{u,\alpha^\prime}(x_1)&=&h_{u,\alpha}(x_1)+(\alpha^\prime-\alpha)\\
                               &=&\lambda h_{u_1,\alpha_1}(x_1)+(1-\lambda)h_{u_2,\alpha_2}(x_1)+(\alpha^\prime-\alpha)\\
                               &=& h_{u_1,\alpha_1}(x_1)+(\alpha^\prime-\alpha)\\
                               &>& h_{u_1,\alpha_1}(x_1)\\
                               &=& h_{u_1,\alpha_1}(x_1)\vee h_{u_2,\alpha_2}(x_1),
\end{eqnarray*}
this contradicts to \eqref{P308:5}.

Now that we have shown that \eqref{P308:3} holds for some $x_0$, then for this $x_0$,
 \begin{eqnarray*}
     \langle v,x_0\rangle+\beta&=&\mu(\langle v_1,x_0\rangle+\beta_1)+(1-\mu)(\langle v_2,x_0\rangle+\beta_2)\\
                               &=&\mu\langle v_1,x_0\rangle+(1-\mu)\langle v_2,x_0\rangle+\mu\beta_1+(1-\mu)\beta_2\\
                               &=&\langle \mu v_1+(1-\mu)v_2,x_0\rangle+\mu\beta_1+(1-\mu)\beta_2\\
                               &=&\langle v,x_0\rangle+\mu\beta_1+(1-\mu)\beta_2,
\end{eqnarray*}
thus we have $\beta=\mu \beta_1+(1-\mu)\beta_2$.

{\it Step 3}. Generally, the conclusion holds.

 Denote $A=[\|u_1-u_2\|=0]$. From Proposition \ref{support}(1), there exists $u_0\in E^*$ with $\|u_0\|=1$. Let $w=I_Au_0+u_1$ and $u_3=I_A w+I_{A^c}u_2$, then $\|u_3-u_1\|=I_A\|u_0\|+I_{A^c}\|u_1-u_2\| \neq 0$ on $\Omega$, namely $u_3-u_1$ has full support.

Now according to {\it Step 1}, there exists $\mu_1\in L^0({\mathcal F})$ with $0\leq \mu_1\leq 1$ such that
$${\hat T}(u_1, \alpha)=\mu_1{\hat T}(u_1, \alpha_1)+(1-\mu_1){\hat T}(u_1, \alpha_2),$$
according to {\it Step 2}, there exists $\mu_2\in L^0({\mathcal F})$ with $0\leq \mu_2\leq 1$ such that
$${\hat T}(\lambda u_1+(1-\lambda)u_3, \alpha)=\mu_2{\hat T}(u_1, \alpha_1)+(1-\mu_2){\hat T}(u_3, \alpha_2).$$
Since
 \begin{eqnarray*}
   & &I_Au_1+I_{A^c}(\lambda u_1+(1-\lambda)u_3)\\
   &=&I_A\lambda u_1+I_A(1-\lambda) u_1+I_{A^c}\lambda u_1+I_{A^c}(1-\lambda)u_3\\
   &=&I_A\lambda u_1+I_A(1-\lambda) u_2+I_{A^c}\lambda u_1+I_{A^c}(1-\lambda)u_2\\
   &=&\lambda u_1+(1-\lambda)u_2\\
   &=& u,
\end{eqnarray*}
 by the stable property of $\hat T$, we get
  \begin{eqnarray*}
   & &{\hat T}(u, \alpha)\\
   &=&I_A{\hat T}(u_1, \alpha)+I_{A^c}{\hat T}(\lambda u_1+(1-\lambda)u_3, \alpha)\\
   &=&I_A(\mu_1{\hat T}(u_1, \alpha_1)+(1-\mu_1){\hat T}(u_1, \alpha_2))+I_{A^c}(\mu_2{\hat T}(u_1, \alpha_1)+(1-\mu_2){\hat T}(u_3, \alpha_2))\\
   &=&(I_A\mu_1+I_{A^c}\mu_2){\hat T}(u_1, \alpha_1)+I_A(1-\mu_1){\hat T}(u_2, \alpha_2)+I_{A^c}(1-\mu_2){\hat T}(u_2, \alpha_2)\\
   &=&(I_A\mu_1+I_{A^c}\mu_2){\hat T}(u_1, \alpha_1)+[I_A(1-\mu_1)+I_{A^c}(1-\mu_2)]{\hat T}(u_2, \alpha_2),
\end{eqnarray*}
 if we set $\mu=I_A\mu_1+I_{A^c}\mu_2$, then $\mu\in L^0({\mathcal F})$ with $0\leq \mu\leq 1$ and
  $${\hat T}(u, \alpha)=\mu{\hat T}(u_1, \alpha_1)+(1-\mu){\hat T}(u_2, \alpha_2).$$
This finally completes the proof.
\end{proof}

Now we can give an explicit form of $\hat T$.

\begin{proposition}\label{Affrep}
$\hat T:E^*\times L^0({\mathcal F})\to E^*\times L^0({\mathcal F})$ is invertible and $L^0$-affine. Precisely, there exist $w\in E^*, \tau\in L^0_{++}({\mathcal F}), \beta\in L^0({\mathcal F})$, a module automorphism $D: E^*\to E^*$ and an $L^0$-linear function $d: E^*\to L^0({\mathcal F})$ such that for each $(u,\alpha)\in E^*\times L^0({\mathcal F})$,
$${\hat T}(u,\alpha)=(Du+w,d(u)+\tau \alpha+\beta).$$
\end{proposition}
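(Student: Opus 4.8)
The plan is to exploit everything established about $\hat T$ so far --- it is a stable bijection of $E^*\times L^0(\mathcal F)$ that maps $L^0$-line segments onto $L^0$-line segments (Proposition \ref{line-line}) --- and feed this into the fundamental theorem of affine geometry in regular $L^0$-modules from \cite{WGL}. The first step is to verify the hypotheses of that theorem: a stable bijection between regular $L^0$-modules that carries $L^0$-line segments to $L^0$-line segments in both directions must be $L^0$-affine. Here $E^*\times L^0(\mathcal F)$ is such a module (it is the random conjugate space of the $RN$ module $E\times L^0(\mathcal F)$, hence regular), and the segment-preserving property in both directions is exactly what Proposition \ref{line-line} gives. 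So one concludes directly that $\hat T$ is $L^0$-affine, i.e. $\hat T(z)-\hat T(\theta)$ is $L^0$-linear and stable on $E^*\times L^0(\mathcal F)$.

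Next I would unpack the $L^0$-affine structure into coordinates. Write $\hat T(u,\alpha)=\hat T(0,0)+R(u,\alpha)$ where $R$ is stable $L^0$-linear; split $R$ into its two component maps into $E^*$ and into $L^0(\mathcal F)$ respectively. The $E^*$-component is $L^0$-linear in $(u,\alpha)$; call its restriction to $\{(u,0)\}$ the map $D$ and to $\{(0,\alpha)\}$ the map $\alpha\mapsto \alpha\,w_0$ for some fixed $w_0\in E^*$. Similarly the $L^0(\mathcal F)$-component is $L^0$-linear, equal to $d(u)+\tau\alpha$ for an $L^0$-linear functional $d:E^*\to L^0(\mathcal F)$ and a scalar $\tau\in L^0(\mathcal F)$; adding back the constant from $\hat T(0,0)$ produces the additive constants $w\in E^*$ and $\beta\in L^0(\mathcal F)$. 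Thus $\hat T(u,\alpha)=(Du+\alpha w_0+w,\ d(u)+\tau\alpha+\beta)$.

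Two things then remain. First, I must show the "$\alpha w_0$" term vanishes, i.e. the $E^*$-component does not actually depend on $\alpha$. This is precisely Lemma \ref{uni}: if $\hat T(u,\alpha_1)=(v_1,\beta_1)$ and $\hat T(u,\alpha_2)=(v_2,\beta_2)$ then $v_1=v_2$, so the first coordinate of $\hat T(u,\cdot)$ is constant in $\alpha$; hence $w_0=0$ and the first coordinate is $Du+w$. Second, I must establish that $D$ is a module \emph{automorphism} of $E^*$ (not merely an endomorphism) and that $\tau\in L^0_{++}(\mathcal F)$. For $D$ bijective: apply the same analysis to $\hat T^{-1}$ (which by Proposition \ref{inverse} is again stable and fully order preserving, hence $L^0$-affine with first coordinate $D'v+w'$), and observe that the composites $\hat T\circ\hat T^{-1}$ and $\hat T^{-1}\circ\hat T$ being the identity force $DD'=D'D=ID_{E^*}$, so $D$ is invertible; the converse half of Lemma \ref{uni} ($u\mapsto v$ is injective) gives injectivity directly, and surjectivity of $\hat T$ gives surjectivity of $D$. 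For $\tau>0$ on all of $\Omega$: suppose $\tau=0$ on some $A\in\mathcal F_+$; then on $A$ the second coordinate $d(u)+\tau\alpha+\beta=d(u)+\beta$ is independent of $\alpha$, so by restricting via stability to $A$ one gets that $\tilde I_A\hat T$ sends all of $\{(u,\alpha):\alpha\in L^0(\mathcal F)\}$ to a single point on $A$, contradicting injectivity of $\hat T$ (equivalently, $\hat T^{-1}$ would not be well-defined as a bijection). A sign constraint $\tau\ge 0$ itself follows from $\hat T$ being fully order preserving together with Proposition \ref{comparison}: $\alpha_1\le\alpha_2$ implies $h_{u,\alpha_1}\le h_{u,\alpha_2}$, hence $h_{Du+w,\,d(u)+\tau\alpha_1+\beta}\le h_{Du+w,\,d(u)+\tau\alpha_2+\beta}$, so $\tau\alpha_1+\cdots\le\tau\alpha_2+\cdots$ for all $\alpha_1\le\alpha_2$, forcing $\tau\ge0$; combined with the non-vanishing just argued, $\tau\in L^0_{++}(\mathcal F)$.

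The main obstacle I anticipate is the clean application of the fundamental theorem of affine geometry from \cite{WGL}: one must confirm that $E^*\times L^0(\mathcal F)$ genuinely falls under the "regular $L^0$-module" hypothesis of that theorem (using that $E$ is complete with full support, so $E^*$ is a complete $RN$ module and in particular regular, and a product with $L^0(\mathcal F)$ stays regular), and that the stability assumption on $\hat T$ --- which we have via Proposition \ref{hT} --- is exactly the extra hypothesis that theorem requires beyond segment-preservation. Once the affine-geometry input is invoked correctly, the coordinate bookkeeping in the last two paragraphs is routine and the only subtlety is the case analysis on the set $A=[\tau=0]$, handled by the stability of $\hat T$ as above. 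I would also record, for use in the subsequent proof of Theorem \ref{OP}, that $D$ being a module automorphism of $E^*$ together with the (later-needed) continuity considerations will identify $D$ with $H^*$ for the automorphism $H$ of $E$ appearing in the theorem, but that identification is deferred to the next stage of the argument.
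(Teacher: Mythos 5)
Your proposal is correct and follows essentially the same route as the paper: invoke the fundamental theorem of affine geometry from \cite{WGL} via Proposition \ref{line-line} and the stability from Proposition \ref{hT}, decompose the resulting $L^0$-affine map coordinatewise, kill the dependence of the first coordinate on $\alpha$ via Lemma \ref{uni}, and deduce invertibility of $D$ and $\tau\in L^0_{++}(\mathcal F)$ from bijectivity and order preservation (your expanded arguments for these last two points, which the paper leaves as a one-line remark, are sound). The only point where the paper is more concrete than you are is in verifying the hypothesis of the affine-geometry theorem: rather than a general regularity argument, it exhibits two $L^0$-independent elements $(u_0,0)$ and $(0,1)$ of $E^*\times L^0(\mathcal F)$, using Proposition \ref{support}(1) to produce $u_0\in E^*$ with $\|u_0\|=1$.
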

\begin{proof}
  According to Proposition \ref{hT}, $\hat T$ is invertible. By Proposition \ref{support}(1), there exists $u_0\in E^*$ such that $\|u_0\|=1$. If $\xi$ and $\eta$ belong to $L^0({\mathcal F})$ and $\xi(u_0,0)+\eta(1,0)=(
  \xi u_0,\eta)=(\theta,0)$, then clearly $\xi=\eta=0$, thus $(u_0,0)$ and $(0,1)$ are $L^0$-independent in $E^*\times L^0({\mathcal F})$. By Proposition \ref{line-line}, $\hat T:E^*\times L^0({\mathcal F})\to E^*\times L^0({\mathcal F})$ maps any $L^0$-line segment to an $L^0$-line segment, invoking the fundamental theorem of affine geometry in regular $L^0$-modules \cite[Theorem 1.1]{WGL}, $\hat T$ must be $L^0$-affine. In view of Lemma \ref{uni}, we can denote ${\hat T}(u,\alpha)=(y(u),\gamma(u,\alpha))$ for each $(u,\alpha)\in E^*\times L^0({\mathcal F})$.
Then both $y: E^*\to E^*$ and $\gamma: E^*\times L^0({\mathcal F}) \to L^0({\mathcal F})$ are $L^0$-affine, namely there exist $w\in E^*, \tau\in L^0({\mathcal F}), \beta\in L^0({\mathcal F})$, a module homomorphism $D: E^*\to E^*$ and an $L^0$-linear function $d: E^*\to L^0({\mathcal F})$ such that $y(u)=Du+w, \gamma(u,\alpha)=d(u)+\tau \alpha+\beta$. Since $\hat T$ is bijective and preserve the order of the second variable, we can see that $D$ must be invertible and $\tau\in L^0_{++}({\mathcal F})$.
\end{proof}

\begin{remark}
In Iusem, Reem and Svaiter \cite{IRS}, the affineness of $\hat T: X^*\times \mathbb R\to X^*\times \mathbb R$ is established separately for the mapping $y: X^*\to X^*$ (see \cite[Corollary 4]{IRS}) and the mapping $\gamma: X^*\times \mathbb R\to \mathbb R$ (see \cite[Proposition 7]{IRS}). Since the affineness of $y$ relies on the fundamental theorem of affine geometry, Iusem, Reem and Svaiter had to impose the assumption that the Banach space $X$ in their main results, Theorem 1 and Theorem 2, has dimension not less than 2. As a comparison, we establish the $L^0$-affineness of $\hat T$ as a whole so that we do not need an additional assumption that $E$ contains a free submodule with rank 2.
\end{remark}

We proceed to verify the a.s. boundedness of $D$ and $d$.

\begin{proposition}\label{asbounded}
Both $D$ and $d$ in Proposition \ref{Affrep} are a.s. bounded and thus continuous. Specially, $d\in E^{**}$.
\end{proposition}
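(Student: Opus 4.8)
The plan is to exploit the fact that $\hat T$ and its inverse are both built from stable, fully order‑preserving operators, so the decomposition of Proposition~\ref{Affrep} applies symmetrically. Write $\hat T(u,\alpha)=(Du+w,\,d(u)+\tau\alpha+\beta)$ and, applying Proposition~\ref{Affrep} to $T^{-1}$ (which is again stable and fully order preserving by Proposition~\ref{inverse}), write $\hat T^{-1}(v,\gamma)=(D'v+w',\,d'(v)+\tau'\gamma+\beta')$ with $D'$ a module automorphism of $E^*$, $d'$ an $L^0$‑linear functional, $\tau'\in L^0_{++}(\mathcal F)$. Composing the two representations and matching coordinates forces $D'=D^{-1}$ (so in particular $D$ is bi‑$L^0$‑linear with $L^0$‑linear inverse) and $d' = -\tau'\,(d\circ D^{-1})$, i.e. $d(u) = -\tau\, d'(Du)$. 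So it suffices to prove that $D$ and $d$ are a.s. bounded; continuity then follows from the characterization of $L(E^*,E^*)$ and $L(E^*,L^0(\mathcal F,\mathbb R))$ in terms of a.s.\ bounded module homomorphisms recalled in the introduction, and $d\in E^{**}$ is then immediate.

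To get a.s.\ boundedness of $D$, I would argue by contradiction using the order structure that $\hat T$ must respect on all of $\mathscr C(E)$, not just on $\mathscr A(E)$. The key point is that boundedness of the $L^0$‑affine pieces is equivalent to a continuity property of $T$ on indicator‑type or cone‑type functions. Concretely: if $D$ were not a.s.\ bounded, there would be $A\in\mathcal F_+$ and a sequence $\{u_n\}$ in $E^*$ with $\|u_n\|\le 1$ but $\tilde I_A\|Du_n\|\to\infty$ in probability (after passing to a subsequence and concatenating, one may in fact arrange $\|Du_n\|\ge n$ on a fixed set of positive measure, using stability to patch). Pick $x_0\in E$ with $\|x_0\|=1$ (Proposition~\ref{support}(1)); the family $h_{u_n,\,-\langle u_n,x_0\rangle}$ all vanish at $x_0$ and lie below $\delta_{x_0}$ after an obvious adjustment, so $g:=\vee_n h_{u_n,\,-\langle u_n,x_0\rangle}$ is a member of $\mathscr C(E)$ (its epigraph is closed and it is finite on a neighbourhood of $x_0$ up to the usual care). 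By Proposition~\ref{exchange}, $T(g)=\vee_n \hat T(u_n,-\langle u_n,x_0\rangle)=\vee_n h_{Du_n+w,\,*}$. Now examine the value of $T(g)$ at a point $x_1$ with $\langle \cdot,x_1\rangle$ separating the unbounded directions $Du_n$: since $\|Du_n\|$ blows up, $\langle Du_n,x_1-x_*\rangle$ can be made to tend to $+\infty$ on a set of positive measure for a suitable choice, forcing $T(g)(x_1)=+\infty$ on that set while $\mathrm{dom}(T(g))$ is too small to be nonempty — contradicting $T(g)\in\mathscr C(E)$. A symmetric argument (or the $\tau' d\circ D^{-1}$ identity above together with boundedness of $D$) then handles $d$: once $D$ is known a.s.\ bounded, the relation $d'(Du)=-\tau^{-1}d(u)$ plus the fact that $d'$ is $L^0$‑linear and $D$ is onto reduces boundedness of $d$ to boundedness of $d'$, and one runs the same contradiction with the roles of $T$ and $T^{-1}$ interchanged; alternatively one tests $\hat T$ directly against $L^0$‑affine minorants of a fixed $f\in\mathscr C(E)$ with $f$ finite everywhere (Proposition~\ref{valuefinite}), where unboundedness of $d$ would again push a value to $+\infty$.

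The main obstacle I anticipate is the construction of the ``witness'' function $g\in\mathscr C(E)$ whose image under $T$ detects the unboundedness: one must choose the sequence $u_n$ and the affine shifts so that (i) the supremum $g$ genuinely lies in $\mathscr C(E)$ — closedness of $\mathrm{epi}(g)$ and properness need the shifts tuned so all the affine pieces agree at a common point — and (ii) after applying $\hat T$ the resulting supremum fails to be proper. Stability is essential here: to turn ``$\|Du_n\|$ is unbounded on some set of positive measure'' into ``$\|Du_n\|\ge n$ on a \emph{fixed} set of positive measure'' one concatenates countably many $u_n$ over a measurable partition, exactly as in the proof of Proposition~\ref{valuefinite}. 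The partial‑order subtlety (that $L^0(\mathcal F)$ is not totally ordered) means one cannot simply say ``the norms go to infinity'' globally; the argument has to be localized to a set $A\in\mathcal F_+$ throughout, and the contradiction is reached by showing $\tilde I_A\,\mathrm{dom}(T(g))=\emptyset$.
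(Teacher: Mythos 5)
There is a genuine gap at the decisive step for $D$. You reduce to: $\{u_n\}\subset E^*$ with $\|u_n\|\le 1$ and $\|Du_n\|\ge n$ on a fixed $A\in\mathcal F_+$, and then assert that a test point $x_1$ can be chosen so that $\langle Du_n,x_1-x_*\rangle$ tends to $+\infty$ on a set of positive measure. But unboundedness of the norms $\|Du_n\|$ only provides, for each $n$ separately, some $x$ (depending on $n$) where $|\langle Du_n,x\rangle|$ is large; the existence of a \emph{single} $x_1$ with $\vee_n|\langle Du_n,x_1\rangle|=+\infty$ on a positive-measure set is precisely the contrapositive of the uniform boundedness principle, i.e.\ the very statement you are trying to prove. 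The paper does not avoid this: it explicitly invokes the uniform boundedness principle in complete $RN$ modules (\cite[Theorem 29]{Guo-Modulehomo}) to pass from ``$\vee\{|\langle Du,x\rangle|:u\in B\}\in L^0(\mathcal F)$ for every $x$'' to ``$\{Du:u\in B\}$ is a.s.\ bounded''. Your contradiction argument as written never materializes without that theorem (or a Baire/gliding-hump substitute), and you neither cite nor prove it.

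A second, smaller problem is the order in which you treat $D$ and $d$. In your witness computation the transformed affine pieces are $h_{Du_n+w,\;d(u_n)-\tau\langle u_n,x_0\rangle+\beta}$, so the constants $d(u_n)$ enter the supremum and, being not yet known to be bounded below, could cancel any blow-up of $\langle Du_n,x_1\rangle$; the cases ``$D$ unbounded'' and ``$d$ unbounded'' do not decouple in the direction you chose. The paper's proof resolves both issues cleanly with a single canonical witness, $g=\|\cdot\|=\vee\{h_{u,0}:u\in B\}$: Proposition \ref{exchange} gives $[T(g)](x)=\vee\{\langle Du+w,x\rangle+d(u)+\beta:u\in B\}$, Proposition \ref{valuefinite} makes this finite-valued, evaluation at $x=\theta$ (killing the $D$-part) shows $d$ is a.s.\ bounded \emph{first}, and only then does the pointwise bound $\langle Du,x\rangle\le [T(g)](x)-\langle w,x\rangle+\|d\|-\beta$ feed into the uniform boundedness principle to bound $D$. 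Your composition identity $d'(Du)=-\tau'd(u)$ is correct but carries no information here, since $d'$ is just as unknown as $d$.
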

\begin{proof}
 We first prove $d$ is a.s. bounded. Since $g=\|\cdot\|\in \mathscr{C}(E)$, we denote $g_1=T(g)$. Let $B$ be the closed unit ball of $E^*$, i.e. $B=\{u\in E^*: \|u\|\leq 1\}$. Since $\|x\|=\vee\{h_{u,0}(x)=\langle u, x\rangle: u\in B\}$ for every $x\in E$, we have
\begin{eqnarray*}
 g_1(x)&=&[T(g)](x)\\
       &=&\vee\{T(h_{u,0})(x):u\in B\}\\
       &=& \vee\{h_{\hat T(u,0)}(x):u\in B\}\\
       &=& \vee\{\langle Du+w, x\rangle+d(u)+\beta:u\in B\}.
\end{eqnarray*}
Therefore, $g_1(0)=\vee \{d(u)+\beta: u\in B\}=\vee \{d(u): u\in B\}+\beta$.
 By Proposition \ref{valuefinite}, we have $\vee\{d(u):u\in B\}=g_1(0)-\beta\in L^0({\mathcal F})$, thus $d$ is a.s. bounded with $\|d\|=g_1(0)-\beta$.

We then prove $D$ is a.s. bounded.  By Proposition \ref{valuefinite},
$$\vee\{\langle Du+w, x\rangle+d(u)+\beta:u\in B\}=g_1(x)\in L^0({\mathcal F}), \forall x\in E,$$
thus
$$\langle Du, x\rangle\leq g_1(x)-\langle w, x\rangle-d(u)-\beta\leq g_1(x)-\langle w, x\rangle+\|d\|-\beta$$ for every $u\in B$ and $x\in E$.
Then $$\vee\{|\langle Du, x\rangle|: u\in B\}=\vee\{\langle Du, x\rangle: u\in B\}\leq g_1(x)-\langle w, x\rangle+\|d\|-\beta\in L^0({\mathcal F})$$ for every $x\in E$. Using uniform boundedness principle in RN modules (see \cite[Theorem 29]{Guo-Modulehomo}), we obtain that $\{Du:u\in B\}$ is a.s bounded, therefore $D$ is a.s. bounded. This completes the proof.
\end{proof}

Now we can give the proof of Theorem \ref{OP}.

{\bf Proof of Theorem \ref{OP}}. Sufficiency can be easily shown by straightforward verification. We only prove the necessity.

From Corollay \ref{Affsup}, for any $f\in \mathscr{C}(E)$ we have that $f=\vee\{h_{u,\alpha}: h_{u,\alpha}\leq f\}$.
Then according to Proposition \ref{exchange},
$T(f)=\vee\{Th_{u,\alpha}: h_{u,\alpha}\leq f\}$.
 By Proposition \ref{Affrep} and Proposition \ref{asbounded}, there exist $w\in E^*$, $d\in E^{**}$, $\tau\in L^0_{++}({\mathcal F}), \beta\in L^0({\mathcal F})$ and a continuous module automorphism $D$ of $E^*$ such that
 $${\hat T}(u,\alpha)=(Du+w,\langle d,u\rangle+\tau \alpha+\beta),$$
thus for each $x\in E$,
\begin{eqnarray*}
  [T(f)](x)&=&\vee \{\langle Du+w,x\rangle+\langle d,u\rangle+\tau \alpha+\beta: h_{u,\alpha}\leq f \}\\
           &=& \vee\{\langle Du,x\rangle+\langle d,u\rangle+\tau \alpha: h_{u,\alpha}\leq f \}+\langle w,x\rangle+\beta\\
           &=& \tau \vee\{\langle Cu,x\rangle+\langle c,u\rangle+\alpha: h_{u,\alpha}\leq f\}+\langle w,x\rangle+\beta \\
           &=& \tau \vee\{\langle C^*x+c,u\rangle+\alpha: h_{u,\alpha}\leq f \}+\langle w,x\rangle+\beta,
\end{eqnarray*}
where $C=\tau^{-1}D, c=\tau^{-1}d$ and $C^*: E^{**}\to E^{**}$ is the conjugate of $C$, so that $C^* x$ must be understood through the natural embedding $E\subset E^{**}$. It follows from Proposition \ref{bi-conjugate} that
\begin{equation}\label{T1}
  [T(f)](x)=\tau f^{**}(C^* x+c)+\langle w,x\rangle+\beta,  \quad\forall x\in E.
\end{equation}

Define $Z\subset E^{**}$ as $Z=\{C^* x+c: x\in E\}$. We will prove that $Z=E$.

We first show that $E\subset Z$. Assume that there exists $\tilde x\in E\setminus Z$. Consider the indicator function $\delta_{\tilde x}: E\to \bar L^0({\mathcal F})$. From Lemma \ref{Indicator}, $\delta_{\tilde x}\in \mathscr{C}(E)$ and $\delta^{**}_{\tilde x}(x^{**})=\delta_{\tilde x}(x^{**})$ for every $x^{**}\in E^{**}$, where $\tilde x$ is seen as an element in $E^{**}$.
It follows from \eqref{T1} that $$[T(\delta_{\tilde x})](x)=\tau \delta_{\tilde x}(C^* x+c)+\langle w,x\rangle+\beta, \quad\forall x\in E.$$
However, since $\tilde x\notin Z$, we have $C^* x+c\neq \tilde x$ for all $x\in E$, thus $$\delta_{\tilde x}^{**}(C^* x+c)=\infty  \text{~on~} [C^* x+c\neq \tilde x]\in {\mathcal F}_+$$ for all $x\in E$, which implies that $dom (T(\delta_{\tilde x}))=\emptyset$. This is a contradiction.

We then show that $Z\subset E$. Suppose now that there exists $\check x\in Z\setminus E$. Since $\check x\in Z$, there exists $x^\prime\in E$ such that $C^* x^\prime+c=\check x$. Consider the indicator function $\delta_{x^\prime}: E\to \bar L^0({\mathcal F})$.
Since $T$ is a bijection, there exists $g\in \mathscr{C}(E)$ such that $T(g)=\delta_{x^\prime}$. Then it follows from \eqref{T1} that
 $$\delta_{x^\prime}(x)=\tau g^{**}(C^* x+c)+\langle w,x\rangle+\beta,  \quad\forall x\in E.$$
 Now that $dom(\delta_{x^\prime})=\{x^\prime\}$, we thus have $Z \cap dom (g^{**})=\{C^* x^\prime+c\}=\{\check x\}$. Since $E\subset Z$ and $\check x\notin E$, we obtain that $E\cap dom (g^{**})=\emptyset$. From the random Fenchel-Moreau duality theorem we see that $g^{**}|_E=g$, thus $dom(g)=E\cap dom (g^{**})=\emptyset$. This is a contradiction.

We have completed the proof of $E=Z$.
We observe now that, since $C^* x+c\in E$ for all $x\in E$, by taking $x=0$, we obtain $c\in E$. As a consequence, $H:=C^*|_E$, the restriction of $C^*$ to $E$, is a continuous module automorphism of $E$.

Using $f^{**}|_E=f$, we obtain that
$$ [T(f)](x)=\tau f^{**}(C^* x+c)+\langle w,x\rangle+\beta=\tau f(Hx+c)+\langle w,x\rangle+\beta, \forall x\in E.$$
This completes the proof. \hfill$\square$

Then we give the proof of Corollary \ref{involution}

{\bf Proof of Corollary \ref{involution}} This is just a copy of the proof of \cite[Corollary 6]{IRS}. \hfill$\square$

As a complement of Theorem \ref{OP}, we give an example showing that a fully order preserving operator $T: \mathscr{C}(E)\to \mathscr{C}(E)$ is not necessarily stable.

\begin{example}
Let the probability space $(\Omega,{\mathcal F}, P)$ be $([0,1),{\mathcal B}, m)$, where ${\mathcal B}$ is the Borel $\sigma$-algebra of $[0,1)$ and $m$ the Lebesgue measure. Set $(E,\|\cdot\|)=(L^0({\mathcal F}),|\cdot|)$. Define a mapping $\theta: [0,1)\to [0,1)$ as $\theta(\omega)=\omega+\frac{1}{2}$ for $\omega\in [0,\frac{1}{2})$ and $\theta(\omega)=\omega-\frac{1}{2}$ for $\omega\in [\frac{1}{2},1)$. Then $\theta$ is an involution, namely $\theta$ is a bijection with inverse $\theta^{-1}=\theta$. Since $\theta$ is obviously measure preserving, $\theta$ is an automorphism of $([0,1),{\mathcal B}, m)$. $\theta$ induces an involution $\sigma:\bar L^0({\mathcal F})\to \bar L^0({\mathcal F})$ by sending each $x\in \bar L^0({\mathcal F})$ to the equivalence class of $x^0(\theta(\cdot))$, where $x^0(\cdot)$ is a representative of $x$. It is easy to check that $\sigma$ is fully order preserving: for every $x,y\in \bar L^0({\mathcal F})$, $x\leq y$ iff $\sigma(x)\leq \sigma(y)$.

Now for every $f\in \mathscr{C}(E)$, define $T(f): E\to \bar L^0({\mathcal F})$ by $[T(f)](x)=\sigma[f(\sigma (x))], \forall x\in E$. We then check that $T(f) \in {\mathscr C}(E)$.

(1). For any $x\in E$, it is obvious that $[T(f)](x)>-\infty$ on $\Omega$.

(2). For any $x\in dom(f)$, we see that $\sigma (x)\in dom[T(f)]$, thus $dom[T(f)]$ is nonempty.

(3). For any $x,y\in E$ and $\lambda\in L^0({\mathcal F})$ with $0\leq \lambda\leq 1$, we have $f(\lambda x+(1-\lambda)y)\leq \lambda f(x)+(1-\lambda)f(y)$ since $f$ is $L^0$-convex, as a result
\begin{eqnarray*}
  [T(f)](\lambda x+(1-\lambda)y)&=&\sigma (f[\sigma(\lambda x+(1-\lambda)y)])\\
           &=& \sigma(f[\sigma(\lambda) \sigma(x)+\sigma(1-\lambda)\sigma(y))\\
           &\leq& \sigma[\sigma(\lambda) f(\sigma (x))+\sigma(1-\lambda)f(\sigma(y))] \\
           &=& \lambda \sigma (f(\sigma (x)))+(1-\lambda)\sigma(f(\sigma (y))\\
           &=&\lambda [T(f)](x)+(1-\lambda)[T(f)](y),
\end{eqnarray*}
 thus $T(f)$ is $L^0$-convex.

(4) Since $epi(f)$ is a closed subset of $E\times L^0({\mathcal F})$ and $$epi(T(f))=\{(\sigma(x),\sigma(r)):(x,r)\in epi(f)\},$$ we see that $epi(T(f))$ is also a closed subset of $E\times L^0({\mathcal F})$.

After this verification, by $f\mapsto Tf$ we obtain an operator $T: \mathscr{C}(E)\to \mathscr{C}(E)$. It is easy to see that $T$ is order preserving and $T(Tf)=f$ for every $f\in \mathscr{C}(E)$ since $\sigma^2$ is the identity. Thus $T$ is an order preserving involution.

We claim that $T$ is not stable. In fact, let $A=[0,\frac{1}{2})$ and $B=A^c=[\frac{1}{2},1)$, then $\theta(A)=B$, and $\theta(B)=A$, equivalently, $\sigma(\tilde I_A)=\tilde I_B$ and $\sigma(\tilde I_B)=\tilde I_A$. Choose $f_0\in \mathscr{C}(E)$ as $f_0(x)=x,\forall x\in E$. Then by construction,
$$[T(f_0)](x)=\sigma [f_0(\sigma (x))]=\sigma (\sigma (x))=x, \forall x\in E,$$
 namely $[T(f_0)]=f_0$, thus
 $$[\tilde I_A T(f_0)](x)=\tilde I_A f_0(x)=\tilde I_A x, \forall x\in E,$$
 and $$[T(\tilde I_A f_0)](x)=\sigma [\tilde I_A\sigma (x)]=\sigma(\tilde I_A)\sigma (\sigma (x))=\tilde I_B x, \forall x\in E.$$
 We see that $\tilde I_A T(f_0)\neq T(\tilde I_A f_0)$.
 Noting that $T(0)=0$, thus $$T(\tilde I_A f_0+\tilde I_B \cdot 0)=T(\tilde I_A f_0)\neq \tilde I_A T(f_0)=\tilde I_A T(f_0)+\tilde I_B T(0),$$ which implies that $T$ is not stable.
\end{example}

\section{The order reversing case}

This section gives the proof of Theorem \ref{OR}.

Proposition \ref{conjugateisomorphism} below is indeed implied in the proof of random Fenchel-Moreau duality theorem \cite[Theorem 5.1]{GZZ-RCA1}(see also \cite{GZWY}).

\begin{proposition}\label{conjugateisomorphism}
The random conjugate transform $f\mapsto f^*$ is a bijection from $\mathscr{C}(E)$ to $\mathscr{C}_{w^*}(E^*)$.
\end{proposition}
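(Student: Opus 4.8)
The plan is to show that $f \mapsto f^*$ is a well-defined injection and surjection between $\mathscr{C}(E)$ and $\mathscr{C}_{w^*}(E^*)$, borrowing essentially everything from the random Fenchel--Moreau duality theorem of \cite{GZZ-RCA1}. First I would recall that, as already noted in the discussion preceding Corollary \ref{Affsup}, for any $f \in \mathscr{C}(E)$ the conjugate $f^*$ is a proper random $w^*$-lower semicontinuous $L^0$-convex function on $E^*$, i.e.\ $f^* \in \mathscr{C}_{w^*}(E^*)$, so the map is well defined into $\mathscr{C}_{w^*}(E^*)$.

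For injectivity: if $f_1, f_2 \in \mathscr{C}(E)$ satisfy $f_1^* = f_2^*$, then taking conjugates again gives $(f_1^*)^* = (f_2^*)^*$ on $E$, i.e.\ $(f_1)_E^{**} = (f_2)_E^{**}$; by the random Fenchel--Moreau duality theorem $(f_i)_E^{**} = f_i$ for $i = 1,2$, hence $f_1 = f_2$. Here one should be a little careful that the ``second conjugate'' $g \mapsto g_E^{**}$ applied to $g = f^* \in \mathscr{C}_{w^*}(E^*)$ is exactly the operation $g \mapsto \vee\{\,u(\cdot)-g(u) : u \in E^*\,\}$ restricted to $E$, which is the one appearing in the statement $f_E^{**} = f$; this is a matter of matching definitions rather than a genuine difficulty.

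For surjectivity: given $g \in \mathscr{C}_{w^*}(E^*)$, I would form its conjugate back on $E$, namely define $h \colon E \to \bar L^0(\mathcal F)$ by $h(x) = \vee\{\,u(x) - g(u) : u \in E^*\,\}$, and claim $h \in \mathscr{C}(E)$ with $h^* = g$. The fact that $h$ is a proper lower semicontinuous $L^0$-convex function on $E$ is the ``dual side'' of the statement that $f^* \in \mathscr{C}_{w^*}(E^*)$ for $f \in \mathscr{C}(E)$, and is part of the content of the random Fenchel--Moreau machinery in \cite{GZZ-RCA1}; that $h^* = g$ is precisely the Fenchel--Moreau biconjugation identity applied to $g$ on the module $(E^*, \sigma(E^*, E))$, whose random conjugate space (under the locally $L^0$-convex topology) is $E$. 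So $g = h^* $ lies in the image, giving surjectivity.

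The main obstacle is not really a logical one but a bookkeeping one: one must make sure that the version of the random Fenchel--Moreau duality theorem cited as \cite[Theorem 5.1]{GZZ-RCA1} is stated (or can be applied) symmetrically, i.e.\ both as $f_E^{**} = f$ for $f \in \mathscr{C}(E)$ and as the analogous identity for $g \in \mathscr{C}_{w^*}(E^*)$ regarded as a proper lower semicontinuous $L^0$-convex function on the $RLC$ module $(E^*, \sigma(E^*, E))$ whose random conjugate space is $E$. Once that symmetry is in hand, the proof is the formal two-line argument above: the conjugate transform is its own inverse up to the identification $E = (E^*, \sigma(E^*,E))^*$, so it is a bijection. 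I would therefore phrase the proof as: ``By the random Fenchel--Moreau duality theorem the conjugate transform $\mathscr{C}(E) \to \mathscr{C}_{w^*}(E^*)$ and the conjugate transform $\mathscr{C}_{w^*}(E^*) \to \mathscr{C}(E)$ are mutually inverse, whence the claim.''
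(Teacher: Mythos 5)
Your proposal is correct and matches the paper's intent: the paper gives no argument at all, merely remarking that the proposition ``is implied in the proof of the random Fenchel--Moreau duality theorem \cite[Theorem 5.1]{GZZ-RCA1}'', and your injectivity-via-$f_E^{**}=f$ plus surjectivity-via-the-dual-biconjugation argument is exactly the content of that remark spelled out. The bookkeeping point you flag (that the conjugate transform on $(E^*,\sigma(E^*,E))$ lands back in $E$, i.e.\ that the relevant random conjugate space may be identified with $E$) is indeed the only delicate issue, and it is handled by the same identification the paper already invokes in the proof of Proposition \ref{subdiff}.
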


We give the proof of Theorem \ref{OR} as follows.

{\bf Proof of Theorem \ref{OR}}. Sufficiency can be easily shown by straightforward verification. We only prove the necessity.

 For an operator $S:\mathscr{C}(E)\to \mathscr{C}_{w^*}(E^*)$, it follows from Proposition \ref{conjugateisomorphism} that there exists a unique $g\in \mathscr{C}(E)$ for each $f\in \mathscr{C}(E)$ such that $S(f)=g^*$. We can define an operator $T: \mathscr{C}(E)\mapsto \mathscr{C}(E)$ by $T(f)=g$ such that $S(f)=g^*$ for each $f\in \mathscr{C}(E)$. If $S$ is a stable fully order reversing operator , then it is easily verified that $T$ is a stable fully order preserving operator. Thus according to Theorem \ref{OP}, there exist a continuous module automorphism $H_1: E\to E$, and $c\in E$, $w \in E^*$, and $\tau \in L^0_{++}({\mathcal F}), \rho_1\in L^0({\mathcal F})$ such that
 $$[T(f)](x)=\tau f(H_1x+c)+\langle w,x\rangle+\rho_1, \quad\forall f\in \mathscr{C}(E), x\in E.$$
Consequently, for each $u\in E^*$,
\begin{eqnarray*}
  & &[S(f)](u)\\
  &=&[T(f)]^*(u)\\
  &=&\vee \{\langle u,x\rangle-\tau f(H_1x+c)-\langle w,x\rangle-\rho_1: x\in E \}\\
  &=&\vee\{\langle u-w,H^{-1}_1(z-c)\rangle-\tau f(z)-\rho_1: z\in E\}\\
  &=&\vee\{\langle u-w,H^{-1}_1z\rangle-\tau f(z):z\in E\}-\langle u-w,H^{-1}_1c\rangle-\rho_1 \\
  &=&\tau\vee\{\langle u-w,Hz\rangle-f(z):z\in E\}+\langle u,-H^{-1}_1c\rangle+\langle w,H^{-1}_1c\rangle-\rho_1 \\
  &=& \tau f^*(H^* u+v)+\langle u,y\rangle+\rho,
\end{eqnarray*}
where $H=\frac{1}{\tau_1}H^{-1}_1, v=-H^* w, y=-H^{-1}_1c$ and $\rho=\langle w,H^{-1}_1c\rangle-\rho_1$.

We see that $H: E\to E$ is a continuous module automorphism, $v \in E^*$, $y\in E$ and $\rho\in L^0({\mathcal F})$.
\hfill$\square$

\section*{Acknowledgements}
 The first author was supported by the Natural Science Foundation of China (Grant No.11701531) and the Fundamental Research Funds for the Central Universities, China University of Geosciences (Wuhan) (No. CUGL170820). The second author is supported by the Natural Science Foundation of China(Grant No.11971483). The third author was supported by the Natural Science Foundation of China(Grant No.11501580).

\end{document}